\newcommand{\symm}{\mc{S}}
\newcommand{\rgrad}{{\mathop{\rm grad\,}}}
\newcommand{\rhess}{{\mathop{\rm Hess\,}}}
\newcommand{\proj}{{\mc{P}}}
\newcommand{\gm}{{\mathsf{G}}}
\def\smoothf{\beta_f}
\def\smoothF{\beta_F}
\def\hesssmoothf{\rho_f}
\def\smoothphif{\beta_{\vphi_f}}
\def\hesssmoothphif{\rho_{\vphi_f}}
\def\lipf{L_f}
\def\lipphif{L_{\vphi_f}}
\def\smoothc{\beta_c}
\def\sigmacq{\gamma}
\def\sigmadn{\gamma_Q}
\def\eigvaldual{\lambda_{Z_\star}}
\def\Lambdaall{\Lambda}
\def\Lambdaqg{\Lambda_{\rm qg}}
\def\Lambdaproj{\Lambda_{\rm proj}}
\def\sigmamax{\sigma_{\max}(M_{R^\star})}
\def\prox{{\bf prox}}
\def\dl{{\Vert}}
\begin{document}
\title{Proximal algorithms for constrained composite optimization, \\
  with applications to solving low-rank SDPs}
\author{Yu Bai\footnote{Department of Statistics, Stanford
    University. {\tt yub@stanford.edu}}
  \and John Duchi\footnote{Department of Statistics and Electrical
    Engineering, Stanford University. {\tt jduchi@stanford.edu}}
  \and Song Mei\footnote{Institute for Computational and Mathematical
    Engineering, Stanford University. {\tt songmei@stanford.edu}}
}

\maketitle

\begin{abstract}
  We study a family of (potentially non-convex)
  constrained optimization problems
  with convex composite structure.
  Through a novel analysis of non-smooth geometry, we show that
  proximal-type algorithms applied to exact penalty formulations
  of such problems exhibit local linear convergence under
  a quadratic growth condition, which
  the compositional structure we consider ensures.


  The main application of our results is to low-rank semidefinite
  optimization with Burer-Monteiro factorizations. We precisely identify the
  conditions for quadratic growth in the factorized problem via structures
  in the semidefinite problem, which could be of independent interest for
  understanding matrix factorization.


  

\end{abstract}

\section{Introduction}
We consider constrained composite optimization problems of the form
\begin{equation}
  \label{problem:constrained-composite}
  \begin{aligned}
    \minimize & ~~ f(c(x))\\
    \subjectto & ~~ Ac(x) - b = 0,
  \end{aligned}
\end{equation}
in which $c:\R^n\to\R^m$ is a smooth map, $f:\R^m\to\R$ is convex,
$A\in\R^{k\times m}$, and $b\in\R^k$. Our main motivating example for
such problems is the Burer-Monteiro factorization method for
solving the semidefinite optimization problem
\begin{equation}
  \label{problem:constrained-sdp}
  \begin{aligned}
    \minimize & ~~ f(X) \\
    \subjectto & ~~ \mc{A}(X)=b, ~ X \succeq 0,
  \end{aligned}
\end{equation}
where $f:\symm^n\to\R$ is smooth convex, $\mc{A}:\symm^n\to\R^k$ is a
symmetric linear operator with $[\mc{A}(X)]_i = \<A_i,X\>$, and
$b\in\R^k$.  The celebrated \citet{BurerMo03} approach
proposes to solve~\eqref{problem:constrained-sdp} by factorizing
$X=RR^\top$ for $R\in\R^{n\times r}$ and solving
\begin{equation}
  \label{problem:constrained-bm}
  \begin{aligned}
    \minimize & ~~ f(RR^\top) \\
    \subjectto & ~~ \mc{A}(RR^\top) = b.
  \end{aligned}
\end{equation}
This is an instance of the
problem~\eqref{problem:constrained-composite} with $c(R)=RR^\top$.
When the solution $X_\star$ of~\eqref{problem:constrained-sdp} has low
rank, satisfying
\begin{equation*}
  X_\star = R_\star R_\star^\top
\end{equation*}
for some $R_\star\in\R^{n\times r_\star}$ with $r_\star\ll n$, the
Burer-Monteiro factorization is particularly appealing because, in addition
to its lower storage and computational cost, it can solve the original
problem~\eqref{problem:constrained-sdp}.
Many problems in science and engineering can be cast as
problem~\eqref{problem:constrained-sdp} with a low-rank solution,
including phase retrieval~\cite{CandesStVo13,WaldspurgerDaMa15},
community detection~\cite{AbbeBaHa16,HajekWuXu16},
phase synchronization~\cite{BandeiraBoSi17,Singer11},
and robust PCA~\cite{McCoyTr11}.


Problem~\eqref{problem:constrained-composite} is the constrained
variant of \emph{composite optimization}
problems~\cite{Burke85},
a family of structured non-convex (and potentially non-smooth)
optimization problems of the form
\begin{equation}
  \label{problem:composite}
  \minimize ~~ \vphi(x) = h(c(x)),
\end{equation}
where $c:\R^n\to\R^m$ is smooth and $h:\R^m\to\R$ is convex. Such
composite structure appears naturally in learning problems with
a convex but non-smooth loss, such as robust phase
retrieval~\cite{DuchiRu18,DavisDrPa19}. 

A prevailing algorithm for solving the composite
problem~\eqref{problem:composite} is the \emph{prox-linear
  algorithm}~\cite{Burke85,BurkeFe95}, which sequentially minimizes a
Taylor-like model of $\vphi$:
\begin{equation}
  \label{algorithm:prox-linear}
  \begin{aligned}
    &  x^{k+1} = \argmin_{x\in\R^n} \set{\vphi(x^k; x) +
      \frac{1}{2t}\ltwo{x - x^k}^2}, \\
    & {\rm where}~~\vphi(x^k; x) \defeq  h(c(x^k) + \grad c(x^k)^\top(x-x^k)).
  \end{aligned}
\end{equation}
The model function $x\mapsto \vphi(x^k;x)$ linearizes the smooth map
$c$, keeps the outer convex function $h$, and is therefore
convex. Each iteration thus requires solving a $1/t$-strongly convex
problem, which is (frequently) efficient.

When $h$ is Lipschitz and $c$ is smooth, the prox-linear algorithm has
global convergence to a stationary point, measured by the sub-differential
stationarity $\dist(0; \partial\vphi(x))$.  Analysis of its local
convergence has been more sophisticated---local linear convergence has been
established under \emph{tilt stability}, which requires a unique minimizer
and strong growth around it~\cite{DrusvyatskiyLe16}. A
naive transformation of the
constrained problem~\eqref{problem:constrained-composite}
into~\eqref{problem:composite}, taking $h(y) =f(y) +\mathbb{I}[Ay
  -b = 0]$, violates such local analyses as there may be
multiple $x$ with $c(x)=y_\star$. Our seek
alternative approaches for solving~\eqref{problem:constrained-composite} with
local convergence guarantees.

\subsection{Outline and our contribution}
In this paper, we consider the exact penalty method~\citep[cf.][]{BorweinLe06}
for solving
problem~\eqref{problem:constrained-composite},
which translates the constraint into an exact penalty term
$\ltwo{Ac(x)-b}$ and solve the unconstrained problem
\begin{equation}
  \label{problem:penalized-composite}
  \minimize ~~ \vphi(x) \defeq f(c(x)) + \lambda\ltwo{Ac(x)-b}.
\end{equation}
In the matrix problem, this corresponds to solving
\begin{equation}
  \label{problem:penalized-bm}
  \minimize ~~ \vphi(R) \defeq f(RR^\top) +
  \lambda\ltwo{\mc{A}(RR^\top)-b}.
\end{equation}
Such an exact penalty term encourages $x$ to fall onto the
constraint set, and as the norm $\ltwo{\cdot}$ grows linearly in
$Ac(x)-b$, one expects that it has a dominating effect over the
objective $f(c(x))$ when $x$ is not on the set, therefore penalizing
infeasible $x$'s~\cite{Bertsekas99}.

Problem~\eqref{problem:penalized-composite} is a composite optimization
problem, and we therefore study the convergence of the prox-linear
algorithm~\eqref{algorithm:prox-linear}, providing arguments on its
convergence for matrix problems of the form~\eqref{problem:penalized-bm}.

We summarize our contributions.
\begin{itemize}
\item We define \emph{norm convexity}, a local geometric property for
  generic non-smooth functions. Norm convexity is a weak notion of
  local convexity, and it dovetails with other regularity conditions for
  composite optimization (e.g.\ sub-differential reguality).  We show
  that the exact penalty function~\eqref{problem:penalized-composite}
  satisfies norm convexity if it has \emph{quadratic growth} around
  the (local) minimizing set (Section~\ref{section:geometry}).
\item We show that the prox-linear algorithm has local linear
  convergence for generic composite optimization if $\vphi$ has
  quadratic growth and norm convexity. Our result
  extends~\citet{DrusvyatskiyLe16} and does not rely on the tilt
  stability assumption required there. Consequently, the prox-linear
  algorithm on the exact penalty
  function~\eqref{problem:penalized-composite} has local linear
  convergence as long as the problem has quadratic growth
  (Section~\ref{section:algorithm}).
\item We instantiate our result on the factorized matrix
  problem~\eqref{problem:penalized-bm}. To verify the assumptions for
  the convergence result, we study whether the quadratic growth
  of~\eqref{problem:penalized-composite} can be deduced from the
  quadratic growth of the original
  SDP~\eqref{problem:constrained-sdp}. We show that quadratic
  growth is always preserved if the rank is exact
  ($r=r_\star$), and is preserved for
  linear objectives ($f(X) = \<C, X\>$) when
  the rank is over-specified ($r>r_\star$). In contrast, when $f$
  is non-linear, quadratic growth is no longer preserved under rank
  over-specification (Section~\ref{section:matrix-growth}). This gives
  a precise characterization for the convergence of the prox-linear
  algorithm on factorized SDPs and could be of broader interest
  for understanding matrix factorization.
\item We provide concrete examples of matrix problems on which our
  theory is applicable (Appendix~\ref{section:examples}) and numerical
  experiments verifying our convergence results
  (Appendix~\ref{section:experiments}).
\end{itemize}
We provide a roadmap of our main results in
Figure~\ref{figure:flowchart}.

\begin{figure*}
  \centering
  \includegraphics[width=0.9\linewidth]{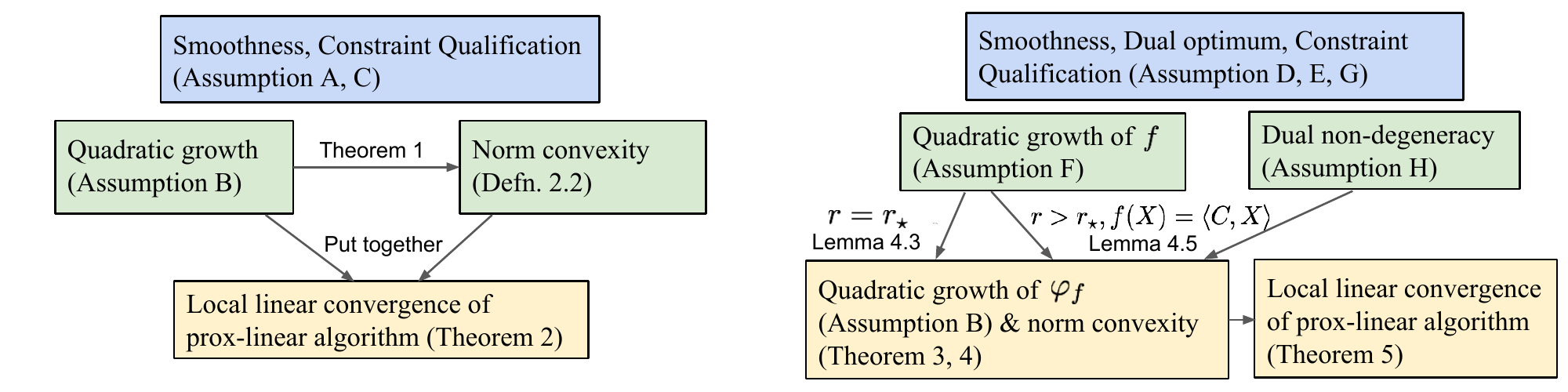}
  \caption{\small A roadmap of the main results. Left: results for generic
    constrained composite optimization (Section~\ref{section:geometry}
  and~\ref{section:algorithm}). Right: application on factorized matrix
  problems (Section~\ref{section:matrix-growth}).} 
  \label{figure:flowchart}
\end{figure*}

\subsection{Related work}
\paragraph{Composite optimization}
The exact penalty method was one particular early motivation for considering
convex composite functions~\cite{Burke91}. The
workof~\citet{Burke85,LewisWr08,DrusvyatskiyLe16,DrusvyatskiyIoLe16} studies
the convergence of proximal algorithms on composite problems. In particular,
\citet{DrusvyatskiyLe16} establish local linear convergence of the
prox-linear algorithm, which shows the ``natural'' rate of convergence in
presence of sub-differential regularity and the tilt-stability condition,
and poses the question (in its Section 5) whether sub-differential
regularity is implied by quadratic growth for general convex composite
problems. We estbalish the norm convexity condition under quadratic growth,
thereby resolving the problem in the special case of penalized objectives.

\paragraph{Matrix factorization}
The idea of solving SDPs by factorizing $X=RR^\top$ is due to
\citet{BurerMo03}. This factorization encodes the PSD constraint $X\succeq
0$ into the objective, at the cost of turning the problem non-convex. When
$f(X)=\<C,X\>$, the enlightening result of~\citet{Pataki98} shows that any
SDP with $k$ linear constraints always has a solution satisfying
$\rank(X_\star)\le \sqrt{2k}$, thus when the number of constraints is small,
one can always set $R\in\R^{n\times r}$ wih $r\ge\sqrt{2k}$, thereby saving
huge computational and storage cost. The non-convexity of the factorized
problem~\eqref{problem:constrained-bm} makes possible spurious local
minimizers and weird geometries. \citet{BoumalVoBa16} establish benign
geometry in a general case, showing that for generic $C$, all second-order
critical points of the linear problem~\eqref{problem:constrained-bm} are
global minima as long as the rank is overspecified (consistent with our
results). For some special problems with non-linear $f$ such as matrix
completion and low-rank matrix sensing, a recent line of
work~\cite{GeLeMa16,GeJiZh17} shows often there is no spurious local
minimum. 

\paragraph{Alternative methods for semidefinite optimization}
The majority of early development on SDPs focuses on
interior-point algorithms, established by~\citet{NesterovNe94}
and~\citet{Alizadeh95}. Interior-point methods are
efficient and robust on small-scale problems ($n\lesssim 10^3$)
but quickly become infeasible beyond that, as they must compute
matrix inverses or Schur complements. Augmented Lagrangian methods,
including ADMM and Newton-CG algorithms,
appear to be faster and more scalable, with well-developed
software available (e.g.\ {\tt SDPAD}~\cite{WenGoYi10} and {\tt
  SDPNAL}~\cite{ZhaoSuTo10,YangSuTo15}).

Riemannian methods are suitable for solving
problem~\eqref{problem:constrained-bm} when the constraint
$\mc{A}(RR^\top)=b$ has special structures such as block-diagonal
constraints or orthogonality constraints~\cite{Boumal15}. These
methods are very efficient in practice. Results on their
local~\cite{AbsilMaSe09} and global convergence~\cite{BoumalAbCa16}
are present. For a thorough introduction to Riemannian optimization on
matrix manifolds see the book of~\citet{AbsilMaSe09}.

\subsection{Notation}
We usually reserve letters $x,y,a,b,\dots$ for vector variables and 
capital letters $X,Y,A,B,\dots$ for matrices. The space of $n\times n$
symmetric matrices is $\symm^n$. For a matrix $A$, 
we let $\lambda_i(A)$ and $\sigma_i(A)$ denote the eigenvalues /
singular values of $A$ sorted in decreasing order. The two norm,
Frobenius norm, and operator norm  are denoted by
$\ltwo{\cdot}$, $\lfro{\cdot}$, and $\opnorm{\cdot}$. For twice
differentiable $f:\R^n\to\R$, $\grad f$ and $\grad^2 f$ denote its
gradient and Hessian. For vector-valued function $c:\R^n\to\R^m$, let
$\grad c(x)\in\R^{n\times m}$ be the (tranposed) Jacobian, so that the
first-order Taylor expansion reads
\begin{equation*}
  c(x) = c(y) + \grad c(y)^\top(y-x) + o(\norm{y-x}).
\end{equation*}
For $\vphi(\cdot)=h(c(\cdot))$ convex composite,
where $c:\R^n\to\R^m$ is smooth and $h:\R^m\to\R$ is convex, let
$\partial\vphi$ denote its (Frechet) sub-differential
\begin{equation*}
  \partial\vphi(x) = \set{\grad c(x) g:g\in\partial h(c(x))}
  \subset \R^n.
\end{equation*}
We let
$\dist(x,S) = \inf_{y\in S}\ltwo{x-y}$ be the distance
of $x$ to $S$ and $\mc{N}(S,\eps) = \set{x:\dist(x,S)\le \eps}$
denote the $\eps$-neighborhood of $S$.

\section{Geometry of the composite objective}
\label{section:geometry}
In this section, we analyze the local geometry of $\vphi$, the
composite objective. We first show that quadratic growth
(Definition~\ref{def:quadratic-growth}) is preserved
when we reformulate problem~\eqref{problem:constrained-composite}
to~\eqref{problem:penalized-composite}. We then show such quadratic
growth will imply norm convexity and sub-differential regularity
(Definition~\ref{def:norm-convexity} and~\ref{def:subregularity}).

Throughout the paper, we let
\begin{equation*}
  \vphi_f(x) = f(c(x))~~~{\rm and}~~~\vphi_g(x)=\ltwo{Ac(x)-b}
\end{equation*}
denote the objective and the penalty function, and
\begin{equation}
  \label{equation:definition-vphi}
  \vphi(x)=\vphi_\lambda(x) \defeq \vphi_f(x)+\lambda\vphi_g(x)
\end{equation}
denote the penalized objective (the subscript $\lambda$ is omitted
when it is clear from the context). Let $S$ be a local minimizing set
of Problem~\eqref{problem:constrained-composite},
i.e. $\vphi_f(x_\star)=\vphi_\star$ for any $x_\star\in S$ and
$\vphi_f(x)\ge\vphi_\star$ for all $x\in\mc{N}(S,\eps_0)$ such that
$Ac(x)-b=0$.

A first question will be whether minimizing $\vphi$ is equivalent to
solving the original constrained
problem~\eqref{problem:constrained-composite}, i.e.~whether $S$ is
also the local minimizing set of $\vphi$. On the constraint set,
$\vphi_f=\vphi$, so the minimizing set is $S$; off the constraint set,
the term $\lambda\ltwo{Ac(x)-b}$ has a ``pointy'' behavior and will
produce strong growth, so if $\vphi_f$ is sufficiently smooth,
intuitively this penalty term will dominate and force $\vphi$ to also
grow off the constraint set. We will make this argument precise in
Section~\ref{section:preservation-quadratic-growth} and give an
affirmative answer under quadratic growth and constraint
qualification. 

We now define the quadratic growth property.
\begin{definition}[Quadratic growth]
  \label{def:quadratic-growth}
  A function $f:\R^n\to\R$ is said to have $\alpha$-quadratic growth
  in $\mc{X}$ around a local minimizing set $S$ if
  \begin{equation*}
    f(x) \ge f_\star + \frac{\alpha}{2}\dist^2(x, S),~~\forall
    x\in\mc{X},
  \end{equation*}
  where $f_\star = f(S)$ is the function value on $S$.
\end{definition}

We now collect our assumptions for this section. For properties that
are required to hold locally, we assume there exists an $\eps_0>0$
such that all local properties hold in $\mc{N}(S,\eps_0)$.

\begin{assumption}[Smoothness]
  \label{assumption:smoothness}
  In a neighborhood $\mc{N}(S,\eps_0)$, 
  the objective $\vphi_f\in C^2$ with $\smoothphif$-bounded and
  $\hesssmoothphif$-Lipschitz Hessian. That is, 
  \begin{equation*}
    \begin{aligned}
      & \opnorm{\grad^2 \vphi_f(x)} \le \smoothphif ~~~{\rm
        and} \\
      & \opnorm{\grad^2 
        \vphi_f(x_1) - \grad^2 \vphi_f(x_2)} \le
      \hesssmoothphif\ltwo{x_1-x_2}.
    \end{aligned}
  \end{equation*}
  Further, $\vphi_f$ is $\lipphif$-locally Lipschitz.
  Functions $f$ and $c$ are also smooth with
  parameters accordingly (for example, $c$ is $\smoothc$-smooth).
\end{assumption}

\begin{assumption}[Quadratic growth]
  \label{assumption:quadratic-growth}
  There exists some $\alpha_{\vphi_f}$ such that locally
  \begin{equation*}
    \vphi_f(x) \ge \vphi_f(x_\star) + \frac{\alpha_{\vphi_f}}{2}
    \dist(x, S)^2
  \end{equation*}
  holds for all $x\in\mc{N}(S,\eps_0)$ such that $Ac(x)-b=0$.
\end{assumption}
Assumption~\ref{assumption:quadratic-growth} ensures that the
constrained optimization problem has quadratic growth around the
minimizing set $S$. This will be the main assumption that we hinge on
to show various geometric properties.

An additional assumption that we make on the constrained
problem~\eqref{problem:constrained-composite} is the following. 
\begin{assumption}[Constraint qualification]
  \label{assumption:constraint-non-degeneracy}
  There exists some constant $\sigmacq>0$ such that for all
  $x\in\mc{N}(S,\eps_0)$, the Jacobian of the constraint function
  $A\grad c(x)^\top$ has full row rank and satisfy the quantitative
  bound $\sigma_{\min}(A\grad c(x)^\top)\ge \sigmacq$. Consequently,
  in the neighborhood $\mc{N}(S,\eps_0)$, the constraint set
  \begin{equation*}
    \mc{M} \defeq \set{x\in\R^n:Ac(x)=b}
  \end{equation*}
  is a smooth manifold. We further assume that the minimizing set $S$
  is a compact smooth submanifold of $\mc{M}$.
\end{assumption}
Assumption~\ref{assumption:constraint-non-degeneracy}
is known as the Linear Independence
Constraint Qualification (LICQ)
in the nonlinear programming literature and requires that the normal
space of $\mc{M}$ at $x$ is $m$-dimensional. (The
reader can refer to~\citep[Chapter 3]{AbsilMaSe09} for background on
smooth manifolds.)


\subsection{Preservation of quadratic growth}
\label{section:preservation-quadratic-growth}
We start by asking the following question: does the penalty method
preserve quadratic growth? Namely, if a constrained problem has
quadratic growth on the constraint set, does the penalized objective
have quadratic growth in the whole space? The following result gives
an affirmative answer. 
\begin{lemma}
  \label{lemma:penalized-quadratic-growth}
  Let Assumptions~\ref{assumption:smoothness}
  and~\ref{assumption:constraint-non-degeneracy}
  hold. Let Assumption~\ref{assumption:quadratic-growth} hold, i.e.
  for all $x\in\mc{N}(S,\eps_0)$ such that $Ac(x)=b$,
  \begin{equation*}
    \vphi_f(x) \ge \vphi_f(x_\star) + \frac{\alpha_{\vphi_f}}{2}
    \dist(x,S)^2,
  \end{equation*}
  then the penalized objective $\vphi=\vphi_f+\lambda\vphi_g$ has
  local quadratic growth: for any $\delta\in(0,1)$, setting
  $\Lambdaqg=\lipphif/(\delta\sigmacq)$, there exists a neighborhood
  $\mc{N}(S,\eps)$ such that for all $\lambda\ge\Lambdaqg$ and
  $x\in\mc{N}(S,\eps)$, we have
  \begin{equation*}
    \vphi(x) \ge \vphi(x_\star) + \frac{\alpha_\vphi}{2}
    \dist(x,S)^2,
  \end{equation*}
  where $\alpha_\vphi=(1-\delta)\alpha_{\vphi_f}$.
\end{lemma}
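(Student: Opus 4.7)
The plan is to exploit the local normal/tangential geometry around the constraint manifold $\mc{M}=\{x:Ac(x)=b\}$: the penalty term $\lambda\vphi_g$ is forced by LICQ to grow linearly in the normal direction (the distance to $\mc{M}$), while the quadratic growth of $\vphi_f$ on $\mc{M}$ controls the tangential component (distance within $\mc{M}$ to $S$). I would combine these two bounds to recover quadratic growth of $\vphi$ in $\dist(x,S)$ in the full ambient space.

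More concretely, for $x\in\mc{N}(S,\eps)$ (with $\eps\le\eps_0$ chosen later), I would first invoke Assumption~\ref{assumption:constraint-non-degeneracy} and a Lyusternik/metric-regularity argument: since $A\grad c(x)^\top$ is uniformly surjective with smallest singular value $\ge\sigmacq$, in a tubular neighborhood of $\mc{M}$ the metric projection $\bar x\defeq\Pi_{\mc{M}}(x)$ is well-defined and
\[
  \ltwo{x-\bar x} \;=\; \dist(x,\mc{M}) \;\le\; \frac{1}{\sigmacq}\ltwo{Ac(x)-b}.
\]
Rearranging gives $\lambda\vphi_g(x)\ge \lambda\sigmacq\ltwo{x-\bar x}$. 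Combining this with the $\lipphif$-Lipschitz bound $\vphi_f(x)\ge\vphi_f(\bar x)-\lipphif\ltwo{x-\bar x}$ yields
\[
  \vphi(x) \;\ge\; \vphi_f(\bar x) + (\lambda\sigmacq-\lipphif)\ltwo{x-\bar x}.
\]
Since $\lambda\ge\Lambdaqg=\lipphif/(\delta\sigmacq)$, the coefficient satisfies $\lambda\sigmacq-\lipphif\ge(1-\delta)\lambda\sigmacq>0$. Moreover $\bar x\in\mc{M}$ lies in $\mc{N}(S,2\eps)\subset\mc{N}(S,\eps_0)$, so Assumption~\ref{assumption:quadratic-growth} applies to $\bar x$ and gives
\[
  \vphi_f(\bar x) \;\ge\; \vphi_\star + \tfrac{\alpha_{\vphi_f}}{2}\dist^2(\bar x,S).
\]

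The last step is purely geometric: letting $a=\dist(\bar x,S)$ and $b=\ltwo{x-\bar x}$, I have
\[
  \vphi(x)-\vphi_\star \;\ge\; \tfrac{\alpha_{\vphi_f}}{2}a^2 + (1-\delta)\lambda\sigmacq\, b,
\]
and by the triangle inequality $\dist(x,S)\le a+b$, so it suffices to prove $\tfrac{\alpha_{\vphi_f}}{2}a^2+(1-\delta)\lambda\sigmacq b\ge \tfrac{(1-\delta)\alpha_{\vphi_f}}{2}(a+b)^2$. Expanding $(a+b)^2=a^2+2ab+b^2$ and absorbing the cross term via weighted AM--GM ($2ab\le \tfrac{\delta}{1-\delta}a^2+\tfrac{1-\delta}{\delta}b^2$), the $a^2$ terms match exactly and the remaining condition reduces to an inequality of the form $b\le C(\delta,\lambda,\sigmacq,\alpha_{\vphi_f})$, which can be enforced by choosing $\eps$ small enough since $b\le\dist(x,S)\le\eps$. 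This fixes the required neighborhood size and completes the argument.

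The main obstacle I anticipate is the final bookkeeping: the target constant $\alpha_\vphi=(1-\delta)\alpha_{\vphi_f}$ is tight, so the geometric combination must use a sharp AM--GM rather than the crude $(a+b)^2\le 2(a^2+b^2)$, which would only yield $\alpha_\vphi=\alpha_{\vphi_f}/4$. A secondary subtlety is the metric-regularity estimate: it needs the constant-rank/implicit-function theorem applied uniformly over the neighborhood to get the clean prefactor $1/\sigmacq$, but this is standard given LICQ and the compactness of $S$.
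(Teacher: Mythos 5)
Your proposal is correct, and its overall architecture matches the paper's: project $x$ onto the constraint manifold $\mc{M}$, apply quadratic growth of $\vphi_f$ at the projected point (which stays within $\mc{N}(S,\eps_0)$ by the same triangle-inequality argument), and use LICQ to make the penalty grow linearly in the normal distance $\ltwo{x-\bar x}$, with $\lambda\ge\Lambdaqg$ ensuring the penalty dominates the Lipschitz decrease of $\vphi_f$. Where you genuinely diverge is in the two technical steps. First, you obtain $\ltwo{Ac(x)-b}\ge\sigmacq\ltwo{x-\bar x}$ via a Lyusternik/metric-regularity argument, whereas the paper (Lemma~\ref{lemma:norm-bound}(1)) proves it directly by a Taylor expansion at the projection point, using that $x-\bar x$ lies in the normal space, and picks up a factor $(1-\delta)$; your clean prefactor $1/\sigmacq$ is slightly optimistic, but since your final step only needs a strictly positive linear coefficient in $\ltwo{x-\bar x}$, any local loss factor is harmless. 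Second, and more substantively, your combination step is different and more elementary: the paper first converts the linear normal growth into quadratic growth (for $\ltwo{x-\bar x}$ small) and then invokes an approximate Pythagorean inequality for repeated projections onto smooth manifolds (Lemma~\ref{lemma:projection-bound}), which exploits orthogonality of $x-\bar x$ to $\mc{T}_{\bar x}\mc{M}$; you instead keep the penalty term linear, use only the triangle inequality $\dist(x,S)\le a+b$, and absorb the cross term via the weighted AM--GM $2ab\le\frac{\delta}{1-\delta}a^2+\frac{1-\delta}{\delta}b^2$, leaving a residual condition $b\le O(\delta\lambda\sigmacq/\alpha_{\vphi_f})$ enforced by shrinking $\eps$. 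This bypasses the manifold-geometric projection lemma entirely while still attaining the sharp constant $\alpha_\vphi=(1-\delta)\alpha_{\vphi_f}$, which is a real simplification; one small point to add is that the neighborhood should be chosen uniformly over $\lambda\ge\Lambdaqg$ (as the lemma statement requires), which your bound permits since the threshold on $b$ only improves as $\lambda$ grows, so it suffices to instantiate it at $\lambda=\Lambdaqg$, giving $\eps\lesssim\lipphif/\alpha_{\vphi_f}$ independent of $\lambda$.
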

Lemma~\ref{lemma:penalized-quadratic-growth} says that quadratic
growth is preserved in the penalized formulation. In particular,
$x_\star$ is a local minimum of $\vphi$. The proof can be found in
Appendix~\ref{appendix:proof-penalize-growth}. 

\subsection{Quadratic growth implies norm convexity}
We now define norm convexity and sub-differential regularity, two
geometric properties that are essential to establishing the convergence of proximal
algorithms in Section~\ref{section:algorithm}.
\begin{definition}[Norm convexity]
  \label{def:norm-convexity}
  The function $\vphi$ is \emph{norm convex} around the minimizing 
  set $S$ with constant $\ell>0$ if for all $x$ near $S$, we have
  \begin{equation*}
    \vphi(x) - \vphi_\star \le \ell \cdot \dist(0,\partial\vphi(x))
    \cdot \dist(x,S).
  \end{equation*}
\end{definition}

Next, we recall the definition of sub-differential
regularity~\cite{DrusvyatskiyLe16}.
\begin{definition}[Sub-differential regularity]
  \label{def:subregularity}
  The function $\vphi$ is $\ell$-subdifferentialy regular at $S$ if
  for all $x$ near $S$, we have
  \begin{equation*}
    \dist(x, S) \le \ell \cdot \dist(0, \partial\vphi(x)).
  \end{equation*}
\end{definition}

We now present our main geometric result, that is, for $\vphi$ having
the penalty structure~\eqref{equation:definition-vphi}, quadratic growth
of $\vphi$ implies norm convexity and sub-differential regularity.

To gain some intuition, let us illustrate that quadratic growth
implies norm convexity and sub-differential regularity in the convex case.
For $\vphi(x)$ convex, assuming $\alpha$-quadratic growth, we have
\begin{equation*}
  \begin{aligned}
    & \quad \frac{\alpha}{2}\dist(x, S)^2 \le \vphi(x) - \vphi(x_\star) \le
    \<\partial\vphi(x), x-x_\star\> \\
    & \le \norm{\partial\vphi(x)} \norm{x-x_\star}
  \end{aligned}
\end{equation*}
for any choice of the subgradient $\partial\vphi(x)$ and minimum
$x_\star$. Choosing the minimum norm subgradient and $x_\star$ closest
to $x$ gives sub-differential regularity
\begin{equation*}
  \dist(x, S) \le \frac{2}{\alpha}\dist(0, \partial\vphi(x)).
\end{equation*}
Therefore norm convexity generalizes convexity by only requiring
$\vphi(x) - \vphi(x_\star)\le C\cdot \norm{\partial \vphi(x)}\cdot
\ltwo{x-x_\star}$.

Norm convexity specifies a local regularity condition for non-convex
non-smooth functions. While it does not necessarily hold for general
composite functions, we show that for our exact penalty objective
$\vphi$, quadratic growth does imply norm convexity and hence
sub-differential regularity.


\begin{theorem}[Norm convexity and sub-differential regularity]
  \label{theorem:quadratic-implies-regularity}
  Let Assumptions~\ref{assumption:smoothness},
  ~\ref{assumption:quadratic-growth}
  and~\ref{assumption:constraint-non-degeneracy} hold.
  Then, there exist a constant $\ell\le 5$ and
  a neighborhood $\mc{N}(S,\eps)$ in which setting the penalty
  parameter $\lambda \ge \Lambdaall$, we have norm convexity 
  \begin{equation}
    \label{equation:comparison}
    \vphi(x) - \vphi_\star \le \ell \cdot \dist(0, \partial\vphi(x))
    \cdot \dist(x, S).
  \end{equation}
  Consequently, sub-differential regularity holds:
  \begin{equation}
    \label{equation:regularity}
    \dist(x, S) \le  \wt{\ell} \cdot \dist(0, \partial\vphi(x)),
  \end{equation}
  where $\wt{\ell} = 4\ell/\alpha_{\vphi_f}$.
\end{theorem}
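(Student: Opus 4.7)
The plan is to reduce sub-differential regularity to norm convexity, then prove norm convexity by exploiting the convexity of the outer function $h(y) := f(y) + \lambda\ltwo{Ay-b}$ (so that $\vphi = h\circ c$) together with a second-order Taylor expansion of $c$. The reduction is immediate: Lemma~\ref{lemma:penalized-quadratic-growth} with $\delta = 1/2$ provides quadratic growth $(\alpha_{\vphi_f}/4)\dist(x,S)^2 \le \vphi(x) - \vphi_\star$ whenever $\lambda \ge \Lambdaqg$, which chained with~\eqref{equation:comparison} yields $\dist(x,S) \le (4\ell/\alpha_{\vphi_f})\dist(0,\partial\vphi(x))$, matching~\eqref{equation:regularity} with $\wt{\ell} = 4\ell/\alpha_{\vphi_f}$. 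So the real work is~\eqref{equation:comparison}.

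For the norm convexity~\eqref{equation:comparison}, fix $x \in \mc{N}(S,\eps)$ and $x_\star = \pi_S(x)$. Since $Ac(x_\star) = b$ we have $h(c(x_\star)) = \vphi_\star$, and convexity of $h$ supplies
\begin{equation*}
  \vphi(x) - \vphi_\star \le \<g, c(x) - c(x_\star)\>, \qquad g \in \partial h(c(x)),
\end{equation*}
with $g = \grad f(c(x)) + \lambda A^\top z$ and $z \in \partial\ltwo{\cdot}(Ac(x)-b)$. Expanding $c(x) - c(x_\star) = \grad c(x)^\top(x-x_\star) + r$ with $\ltwo{r}\le(\smoothc/2)\ltwo{x-x_\star}^2$, and choosing $g = g^\star$ to minimize $\ltwo{\grad c(x) g}$ over admissible $z$ (so that $\grad c(x) g^\star$ realizes $\dist(0,\partial\vphi(x))$), converts the bound to
\begin{equation*}
  \vphi(x) - \vphi_\star \le \dist(0,\partial\vphi(x))\cdot\ltwo{x-x_\star} + \<g^\star, r\>.
\end{equation*}

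The technical crux is controlling $\<g^\star, r\>$ by a $\lambda$-free multiple of $\ltwo{x-x_\star}^2$. Split $\<g^\star, r\> = \<\grad f(c(x)),r\> + \lambda\<z^\star, Ar\>$; the first summand is at most $\lipf\ltwo{r}$, clearly $\lambda$-free. For the second, the identity $Ar = (Ac(x)-b) - A\grad c(x)^\top(x-x_\star)$ combined with a second-order Taylor expansion of $Ac(\cdot)$ gives $\ltwo{Ar} \le (\ltwo{A}\smoothc/2)\ltwo{x-x_\star}^2$ both on and off the manifold $\mc{M} = \set{y : Ac(y)=b}$. On $\mc{M}$, Assumption~\ref{assumption:constraint-non-degeneracy} lets us pick the minimum-norm $z^\star$ that cancels the $N_x\mc{M}$-component of $\grad c(x)\grad f(c(x))$; it satisfies $\ltwo{z^\star}\le\lipphif/(\lambda\sigmacq)$ when $\lambda \ge \lipphif/\sigmacq$, so $\lambda|\<z^\star,Ar\>| \le (\lipphif\ltwo{A}\smoothc/(2\sigmacq))\ltwo{x-x_\star}^2$ is $\lambda$-free. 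Off $\mc{M}$, $z^\star = (Ac(x)-b)/\ltwo{Ac(x)-b}$ is forced and the $\lambda$-scaling of the error survives, but the penalty's normal subgradient forces $\dist(0,\partial\vphi(x)) \ge \lambda\sigmacq - \lipphif$, and a small-$\eps$ computation shows $\lambda\ltwo{Ar}/(\dist(0,\partial\vphi(x))\ltwo{x-x_\star}) = O(\ltwo{A}\smoothc\ltwo{x-x_\star}/\sigmacq)$ can be made a small constant fraction.

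Combining, $\vphi(x) - \vphi_\star \le \dist(0,\partial\vphi(x))\ltwo{x-x_\star} + C\ltwo{x-x_\star}^2$ with $C$ depending only on $\lipphif,\lipf,\ltwo{A},\smoothc,\sigmacq$, and quadratic growth $\ltwo{x-x_\star}^2 \le (2/\alpha_\vphi)(\vphi(x)-\vphi_\star)$ absorbs the error to give $\ell = 1/(1-2C/\alpha_\vphi)$. The main obstacle I anticipate is quantitative: carefully choosing $\Lambdaall \ge \max(\Lambdaqg,\lipphif/\sigmacq)$ and shrinking $\eps$ so the on-manifold and off-manifold contributions compose into the advertised universal bound $\ell \le 5$, rather than a constant that silently blows up with the problem data.
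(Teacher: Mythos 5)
Your reduction of~\eqref{equation:regularity} to~\eqref{equation:comparison} via Lemma~\ref{lemma:penalized-quadratic-growth} with $\delta=1/2$ is exactly the paper's, and your treatment of infeasible points (where $\dist(0,\partial\vphi(x))\ge\lambda\sigmacq-\lipphif$ makes the $\lambda$-scaled linearization error a vanishing fraction of the main term) is essentially the paper's Case 2. The gap is in the feasible case, and it is structural rather than the quantitative bookkeeping issue you flag at the end. Your convexity-plus-linearization argument yields $\vphi(x)-\vphi_\star\le\dist(0,\partial\vphi(x))\cdot\ltwo{x-x_\star}+C\ltwo{x-x_\star}^2$ with $C$ of order $(\lipf+\lipphif\opnorm{A}/\sigmacq)\smoothc$. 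But near a feasible $x$ every term in this inequality is of order $\ltwo{x-x_\star}^2$: the minimum-norm subgradient is itself $O(\ltwo{x-x_\star})$ because $x_\star$ is critical for the restricted problem, and $\vphi(x)-\vphi_\star\asymp\alpha_\vphi\ltwo{x-x_\star}^2$ by quadratic growth. Hence the error term cannot be made negligible by shrinking $\eps$ or raising $\lambda$; your absorption step requires $2C<\alpha_\vphi$, a condition on the problem data that fails in general (in the paper's matrix application $\alpha_{\vphi_f}\propto\sigma_{r_\star}(R_\star)^2\alpha_f$ can be arbitrarily small while $\lipf\smoothc$ stays order one), and when $2C\ge\alpha_\vphi$ the quantity $\ell=1/(1-2C/\alpha_\vphi)$ is no bound at all. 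Convexity of $h$ provides no slack to pay for the curvature of $c$: it certifies $\vphi(x)-\vphi_\star\le\<g,c(x)-c(x_\star)\>$ with factor exactly one, and $h$ has no strong convexity (for linear $f$, none whatsoever).

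The paper's feasible case avoids this by not invoking convexity of $f$ at all. For $\lambda\ge\Lambdaproj$ the minimum-norm element of $\partial\vphi(x)$ coincides with the Riemannian gradient $\rgrad\vphi_f(x)$ of $\vphi_f$ restricted to $\mc{M}$, and since $x_\star$ is a critical point of that restricted problem, the Riemannian Taylor expansion (Lemma~\ref{lemma:riemannian-taylor-expansion}) gives $\<\rgrad\vphi_f(x),x-x_\star\>\ge 2\bigl(\vphi_f(x)-\vphi_f(x_\star)\bigr)-\rho\ltwo{x-x_\star}^3$: both sides share the same leading quadratic form $\rhess\vphi_f(x_\star)$, so the error is genuinely third order, and the factor-two slack combined with quadratic growth absorbs it for small $\eps$, yielding $\ell=1$ on $\mc{M}$ with no restriction on the data. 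That second-order cancellation at the critical point is the missing idea in your proposal; without it, pairing the $\grad f$- and $A^\top z^\star$-components of $g^\star$ against the curvature remainder $r$ leaves an irreducible $O(\ltwo{x-x_\star}^2)$ error whose constant is unrelated to $\alpha_\vphi$.
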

The proof can be found in Appendix~\ref{appendix:proof-main}.



\section{Local convergence of the prox-linear algorithm}
\label{section:algorithm}
We now analyze the convergence of the prox-linear
algorithm~\eqref{algorithm:prox-linear} for generic composite problems
of the form~\eqref{problem:composite}.




Recall that the prox-linear algorithm iterates
\begin{equation*}
    x^{k+1} = \argmin_{x\in\R^n}\vphi_t(x^k;x),
\end{equation*}
where
\begin{equation*}
  \begin{aligned}
    & \vphi_t(x^k; x) = \vphi(x^k;x) + \frac{1}{2t}\ltwo{x-x^k}^2, \\
    & \vphi(x^k; x) = h(c(x^k) + \grad c(x^k)^\top(x-x^k)),
  \end{aligned}
\end{equation*}
and $t>0$ is a small stepsize.

For the convergence result, we assume that $h$ is $L$-Lipschitz and
$c$ is $\beta$-smooth, i.e.
\begin{equation*}
  \ltwo{\grad c(x_1) - \grad c(x_2)} \le
  \beta\ltwo{x_1-x_2},~\textrm{for all}~x_1,x_2.
\end{equation*}
An immediate consequence of the smoothness is that
\begin{equation}
  \label{equation:quadratic-approximation}
  \left| \vphi(x_0;x) - \vphi(x) \right| \le
  \frac{L\beta}{2}\ltwo{x-x_0}^2,
\end{equation}
so $\vphi(x_0;x)$ gives a local quadratic approximation of $\vphi(x)$
near $x_0$. In particular, when $t\le(L\beta)^{-1}$,
$\vphi_t(x_0;x)$ is an upper bound on $\vphi(x)$, implying that the
prox-linear algorithm is a descent method.


We now present our main algorithmic result: the
prox-linear algorithm has local linear convergence as long as $\vphi$
has quadratic growth and norm convexity.
\begin{theorem}[Local linear convergence of the prox-linear algorithm]
  \label{theorem:linear-convergence}
  Suppose $\vphi$ satisfies the above assumptions and has a compact
  local minimizing set $S$. Assume that there exists
  $\eps_0>0$ such that the following happens in $\mc{N}(S,\eps_0)$:
  \vspace{-0.3cm}
  \begin{enumerate}[(1)]
  \item $\vphi$ has $\alpha_\vphi$-quadratic growth
    and norm convexity with constant $\ell$ around $S$;
    \vspace{-0.2cm}
  \item Prox-linear iterates has the proximity property (see
    Definition~\ref{definition:proximity} for a formal definition and
    discussion).
  \end{enumerate}
  \vspace{-0.3cm}
  Then, for $x^0$ sufficiently close to $S$,
  the prox-linear algorithm~\eqref{algorithm:prox-linear} has linear
  convergence:
  \begin{equation*}
    \vphi(x^{k+1}) - \vphi_\star \le q(\vphi(x^k) - \vphi_\star),
  \end{equation*}
  where
  \begin{equation*}
    q \defeq 1 - \frac{1}{9+40\ell+100\ell^2L\beta/\alpha_\vphi}.
  \end{equation*}
\end{theorem}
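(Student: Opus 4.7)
The plan is to run the standard descent-then-error-bound template for proximal-type methods. Pick stepsize $t = 1/(L\beta)$; then~\eqref{equation:quadratic-approximation} gives $\vphi \le \vphi_t(x^k;\cdot)$, while $1/t$-strong convexity of $\vphi_t(x^k;\cdot)$ yields $\vphi_t(x^k; x^{k+1}) \le \vphi(x^k) - \frac{1}{2t}\ltwo{x^{k+1}-x^k}^2$. Combining, the per-step descent $\vphi(x^{k+1}) \le \vphi(x^k) - \frac{1}{2t}\ltwo{x^{k+1}-x^k}^2$ controls the step length $\ltwo{x^{k+1}-x^k}$ in terms of $a_k - a_{k+1}$, where I write $a_k \defeq \vphi(x^k) - \vphi_\star$.

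The second step is to produce, from the optimality conditions of the subproblem, a subgradient $\zeta \in \partial\vphi(x^{k+1})$ whose norm is small. The first-order condition for $\vphi_t(x^k;\cdot)$ gives some $g \in \partial h(c(x^k) + \grad c(x^k)^\top(x^{k+1}-x^k))$ with $\grad c(x^k)\, g = -\frac{1}{t}(x^{k+1}-x^k)$. The proximity property in Definition~\ref{definition:proximity} is precisely the structural tool that allows us to identify this $g$ with an element of $\partial h(c(x^{k+1}))$, so that $\zeta \defeq \grad c(x^{k+1})\, g$ lies in $\partial\vphi(x^{k+1})$. A triangle inequality using $\ltwo{g} \le L$ and $\beta$-smoothness of $c$ then yields $\dist(0,\partial\vphi(x^{k+1})) \le \ltwo{\zeta} \le (1/t + L\beta)\ltwo{x^{k+1}-x^k} = 2L\beta\ltwo{x^{k+1}-x^k}$.

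With these two ingredients I invoke the assumed norm convexity at $x^{k+1}$: $a_{k+1} \le \ell \cdot \dist(0,\partial\vphi(x^{k+1})) \cdot \dist(x^{k+1}, S)$. Step two bounds the first factor by $2L\beta\ltwo{x^{k+1}-x^k}$, while $\dist(x^{k+1}, S) \le \dist(x^k, S) + \ltwo{x^{k+1}-x^k}$, together with $\alpha_\vphi$-quadratic growth at $x^k$ and the descent bound, give $\dist(x^{k+1}, S) \le \sqrt{2a_k/\alpha_\vphi} + \sqrt{2(a_k-a_{k+1})/(L\beta)}$. Assembling yields a recursion of the shape
\[
  a_{k+1} \le C_1(a_k - a_{k+1}) + C_2 \sqrt{(a_k - a_{k+1})\, a_k},
\]
with $C_1$ linear in $\ell$ and $C_2$ proportional to $\ell\sqrt{L\beta/\alpha_\vphi}$. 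Splitting the square root via the AM-GM inequality $C_2\sqrt{uv} \le \epsilon v + (C_2^2/(4\epsilon))u$ with a suitably balanced $\epsilon$, then isolating $a_{k+1}/a_k$, produces the claimed contraction $q = 1 - 1/(9 + 40\ell + 100\ell^2 L\beta/\alpha_\vphi)$ after careful constant tracking.

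Finally, an induction using $\dist(x^k, S) \le \sqrt{2a_k/\alpha_\vphi}$ together with monotone descent keeps the iterates inside $\mc{N}(S,\eps_0)$ whenever $x^0$ is sufficiently close to $S$, legitimizing every step above. The main obstacle is step two: the optimality condition hands back a subgradient of the \emph{model} $\vphi(x^k;\cdot)$ at $x^{k+1}$, not of $\vphi$ itself, and the proximity property is exactly what converts it into an element of $\partial\vphi(x^{k+1})$ with controlled norm. A secondary difficulty is the arithmetic bookkeeping needed to extract the advertised sharp rate rather than a cruder $1 - \Omega(\alpha_\vphi/(\ell^2 L\beta))$ that the straightforward subdifferential-regularity route would deliver.
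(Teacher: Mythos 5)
Your step two contains a genuine gap, and it is exactly the point where your plan diverges from what the paper has to do. The subproblem optimality condition hands you $g \in \partial h\bigl(c(x^k) + \grad c(x^k)^\top(x^{k+1}-x^k)\bigr)$, and you claim the proximity property lets you identify this $g$ with an element of $\partial h(c(x^{k+1}))$, so that $\grad c(x^{k+1})g \in \partial\vphi(x^{k+1})$. That transfer is false in general: for nonsmooth $h$ the subdifferential is not continuous, so a subgradient at the linearized point need not be a subgradient at the nearby point $c(x^{k+1})$, however small the $O(\beta\ltwo{x^{k+1}-x^k}^2)$ discrepancy is. (Take $h=|\cdot|$ in one dimension with the model point at the kink and $c(x^{k+1})$ slightly off it: $g=-1$ is admissible for the model but $\partial h(c(x^{k+1}))=\{1\}$.) This matters here precisely because the exact penalty term $\ltwo{Ac(\cdot)-b}$ has its kink on the constraint set, which is where the iterates live. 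Moreover, the proximity property (Definition~\ref{definition:proximity}) says nothing about subgradients: it only asserts that iterates started near $S$ stay in a small neighborhood of $S$, and it is used in the paper solely to keep all iterates inside the region where the geometric conditions hold. So the ingredient you lean on to close the "main obstacle" does not supply what you need, and without it you have no bound on $\dist(0,\partial\vphi(x^{k+1}))$.

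The paper's proof avoids this by never claiming near-stationarity at the iterate itself. It invokes Lemma~\ref{lemma:near-stationarity} (from Drusvyatskiy--Lewis), which produces an auxiliary point $\what{x}$ satisfying point proximity $\ltwo{x^t-\what{x}}\le\ltwo{x^t-x}$ and near-stationarity $\dist(0,\partial\vphi(\what{x}))\le(3L\beta t+2)\ltwo{\gm_t(x)}$; norm convexity is then applied at $\what{x}$, the error-bound condition (via Lemma~\ref{lemma:regularity-implies-error-bound} and sub-differential regularity from quadratic growth plus norm convexity) bounds $\dist(x,S)$ by $\ltwo{\gm_t(x)}$, and the gap at $\what{x}$ is transferred back to $x^t$ through the quadratic-approximation bound~\eqref{equation:quadratic-approximation} and the subproblem optimality (the chain~\eqref{equation:optgap-upper-bound}--\eqref{equation:optgap-lower-bound}), before finishing with the descent bound. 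If you want to salvage your outline, replace your step two by this auxiliary-point argument (or reprove a version of Lemma~\ref{lemma:near-stationarity}); the remaining descent and bookkeeping steps you sketch are then essentially the paper's, though your recursion and the final constant tracking would still need to be carried out explicitly to recover the stated $q$.
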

The proof builds on existing results on composite optimization
from~\citep[Section 5,6]{DrusvyatskiyLe16}, which we review in
Appendix~\ref{appendix:review-composite-opt}, and makes novel use of
norm convexity to establish the local linear convergence.
The proof can be found in
Appendix~\ref{appendix:proof-prox-linear}. Relationship between our
result and existing results based on tilt stability is
discussed in Appendix~\ref{appendix:relationship-tilt-stability}.

To apply Theorem~\ref{theorem:linear-convergence} on the exact penalty
formulation~\eqref{problem:penalized-composite}, we only need to verify
the quadratic growth assumption (as norm convexity is then implied by
Theorem~\ref{theorem:quadratic-implies-regularity}) -- we will see
more on this on matrix problems in
Section~\ref{section:matrix-growth}.

\section{Application on factorized matrix problems}
\label{section:matrix-growth}
We now instantiate our geometry and convergence results
on our main application, the matrix
problem~\eqref{problem:penalized-bm}, and show that the prox-linear
method achieves local linear convergence for solving many low-rank
semidefinite problems.

For the matrix problem, recall that $c(R)=RR^\top$ and
\begin{equation*}
  \vphi_f(R) =
  f(RR^\top),~~\vphi_g(R)=\ltwo{\mc{A}(RR^\top)-b}.
\end{equation*}


Recall that our algorithmic result
(Theorem~\ref{theorem:linear-convergence}) requires $\vphi$ to have
quadratic growth. The main focus of this section is to study whether
the quadratic growth of $\vphi_f(R)$
(Assumption~\ref{assumption:quadratic-growth}) can be deduced from the
quadratic growth of $f(X)$, which can often be verified more
straightforwardly.


We build such connection in two separate cases: factorization with the
exact rank ($r=r_\star$) and with rank over-specification
($r>r_\star$). We show that quadratic growth is transferred from $f$
to $\vphi_f$ only if we have the exact rank
(Section~\ref{section:matrix-growth-exact-rank}), or that we
over-specify the rank and $f$ is linear ($f(X)=\<C,X\>$)
(Section~\ref{section:over-specification-succeeds}).  In both cases, we adapt
Theorem~\ref{theorem:linear-convergence} to provide the convergence
result as a corollary. If $f$ is not linear, quadratic growth will in
general fail to hold on $\vphi_f$ and the prox-linear algorithm no
longer have local linear convergence. We demonstrate this via a
counter-example (Section~\ref{section:over-specification-fails}).

Throughout this section, we assume the following assumptions on the
semidefinite objective $f$, which we will use to deduce properties
on the factorized objective $\vphi_f$. 
\begin{assumption}[Smoothness]
  \label{assumption:smooth-f}
  The objective $f\in C^2$ with $\smoothf$-bounded and
  $\hesssmoothf$-Lipschitz Hessian, i.e. for all
  $X\succeq 0$,
  \begin{equation*}
    \begin{aligned}
      & \opnorm{\grad^2 f(X)} \le \smoothf~~~{\rm
        and}\\
      & \opnorm{\grad^2 f(X_1) - \grad^2 f(X_2)} \le
      \hesssmoothf\lfro{X_1-X_2}.
    \end{aligned}
  \end{equation*}
  Further, $f$ is $\lipf$-locally Lipschitz near $X_\star$.
\end{assumption}

\begin{assumption}[Dual optimum]
  \label{assumption:dual-existence}
  There exists at least one dual optimum $(y_\star, Z_\star)$
  associated with $X_\star$, i.e. $y_\star\in\R^k$ and $Z_\star\succeq
  0$ that satisfy the KKT conditions
  \begin{equation}
    \label{equation:dual-optimum}
    \grad f(X_\star) + \sum_{i=1}^k y_iA_i - Z_\star = 0~~~{\rm
      and}~~~\<Z_\star, X_\star\> = 0.
  \end{equation}
\end{assumption}
\begin{assumption}[Rank-$r$ quadratic growth]
  \label{assumption:low-rank-quadratic-growth}
  There exists $\eps>0$ such that 
  \begin{equation*}
    f(X) \ge f(X_\star) + \frac{\alpha_f}{2} \cdot \lfro{X-X_\star}^2
  \end{equation*}
  for all $X$ in the set
  \begin{equation*}
    \set{X:X\succeq 0,~\mc{A}(X)=b,~\rank(X)\le r} \cap
    \ball(X_\star,\eps).
  \end{equation*}
\end{assumption}
Assumption~\ref{assumption:dual-existence} states that the
semidefinite problem has a unique low-rank solution and that there is
no duality gap. A number of conditions such as the Slater's condition
guarantee no duality gap and such dual optimum must exist. 
Assumption~\ref{assumption:low-rank-quadratic-growth} ensures that on
low-rank feasible points, $f$ has strong growth around $X_\star$.  As
we only require strong growth on low-rank matrices, this assumption is
often more likely to hold than full quadratic growth. We will
demonstrate this in Section~\ref{section:examples}.
\begin{assumption}[Rank $r$-constraint qualification]
  \label{assumption:constraint-qualification-matrix}
  The constraint set
  \begin{equation*}
    \mc{M}_r \defeq \set{R\in\R^{n\times r}: \mc{A}(RR^\top)=b}
  \end{equation*}
  is a smooth manifold.
  There exists some constant $\sigmacq>0$ such that the constraint
  coefficient matrices $A_1,\dots,A_k$ satisfy 
  $\sigma_{\min}(M_R)\ge\sigmacq$ for all $R$ in a neighborhood of $S$,
  where $M_R\in\R^{nr\times k}$ is defined via
  \begin{equation*}
    M_R \defeq \begin{bmatrix}
      \vec(A_1R) ~ \dots ~ \vec(A_kR)
    \end{bmatrix}.
\end{equation*}
\end{assumption}

\subsection{Preliminaries: global optimality, matrix distance}

Let $X_\star=R_\star R_\star^\top$ be the unique minimum of
problem~\eqref{problem:constrained-sdp} and
$R_\star\in\R^{n\times r_\star}$.

\paragraph{Global optimality}
We first show that the (global) minimizing set of the exact penalty
function corresponds exactly to the solution of the original
semidefinite problem~\eqref{problem:constrained-sdp} when the penalty
parameter $\lambda$ is sufficiently large.
\begin{lemma}
  \label{lemma:global-optimality}
  Under Assumption~\ref{assumption:dual-existence}, when
  $r\ge r_\star$ and $\lambda\ge\ltwo{y_\star}$, the minimizing
  set of $\vphi(R)=f(RR^\top)+\lambda\ltwo{A(RR^\top)-b}$ is
  \begin{equation*}
    S = \set{R\in\R^{n\times r}: RR^\top = X_\star}.
  \end{equation*}
\end{lemma}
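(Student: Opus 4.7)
}

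The plan is to lift the analysis from the factorized variable $R$ to the matrix variable $X = RR^\top$ and then apply the classical exact-penalty argument for convex programs. Since $r \ge r_\star$, the matrix $X_\star$ admits a factorization $R_\star \in \R^{n \times r}$ (padding with zero columns if necessary), and any $R \in \R^{n\times r}$ gives $X = RR^\top \succeq 0$, so $\vphi(R)$ equals the matrix-level penalized objective $F(X) \defeq f(X) + \lambda \ltwo{\mc{A}(X) - b}$ evaluated at this $X$. It therefore suffices to show that $F(X) \ge f(X_\star)$ over all $X \succeq 0$, with equality only at $X = X_\star$.

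For the lower bound I would invoke convexity of $f$ to write $f(X) \ge f(X_\star) + \<\grad f(X_\star), X - X_\star\>$, then substitute the KKT identity $\grad f(X_\star) = Z_\star - \sum_i (y_\star)_i A_i$ together with $\<Z_\star, X_\star\> = 0$ and $\mc{A}(X_\star) = b$ to obtain
\begin{equation*}
  f(X) \ge f(X_\star) + \<Z_\star, X\> - \<y_\star, \mc{A}(X) - b\>.
\end{equation*}
Adding the penalty and applying Cauchy--Schwarz with $\lambda \ge \ltwo{y_\star}$ yields
\begin{equation*}
  F(X) \ge f(X_\star) + \<Z_\star, X\> + (\lambda - \ltwo{y_\star})\ltwo{\mc{A}(X) - b} \ge f(X_\star),
\end{equation*}
since both $Z_\star, X \succeq 0$ imply $\<Z_\star, X\> \ge 0$. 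The lower bound is attained when $RR^\top = X_\star$ (both extra terms vanish and convexity is tight), so $S \subseteq \argmin \vphi$.

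For the reverse inclusion, I would trace back the equality conditions: any minimizing $R$ with $X = RR^\top$ must satisfy (i) $\<Z_\star, X\> = 0$, equivalently $Z_\star X = 0$ by the PSD trace inequality, (ii) the convexity inequality for $f$ is tight, and (iii) $\lambda \ltwo{\mc{A}(X) - b} = \<y_\star, \mc{A}(X) - b\>$. Combining (i) and the KKT identity gives $\<\grad f(X_\star), X - X_\star\> = -\<y_\star, \mc{A}(X) - b\>$, so together with (ii) and (iii) we obtain $f(X) + \lambda \ltwo{\mc{A}(X) - b} - \lambda\ltwo{\mc{A}(X) - b} + \<y_\star, \mc{A}(X) - b\> = f(X_\star)$, which forces $\mc{A}(X) = b$ (in the generic case $\lambda > \ltwo{y_\star}$ this is immediate from (iii); when $\lambda = \ltwo{y_\star}$ it still follows because otherwise $f(X) < f(X_\star)$ along a feasible deformation of $X$, contradicting optimality of $X_\star$). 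Thus $X$ is feasible, PSD, and attains $f(X_\star)$, so uniqueness of $X_\star$ in~\eqref{problem:constrained-sdp} gives $X = X_\star$, i.e.\ $R \in S$.

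The only subtle step is the boundary regime $\lambda = \ltwo{y_\star}$, where the Cauchy--Schwarz inequality only forces $\mc{A}(X) - b$ to be a nonnegative multiple of $y_\star$ rather than zero; the remedy is to use the tightness of the convexity bound (condition (ii)) to transfer the argument back to the feasible SDP and invoke uniqueness of $X_\star$. Everything else is algebraic manipulation of the KKT conditions.
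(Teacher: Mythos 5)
Your overall route is the same as the paper's: pass to $X=RR^\top$, and observe that convexity of $f$ plus the KKT identity~\eqref{equation:dual-optimum} gives $f(X)\ge f(X_\star)+\<Z_\star,X\>-\<y_\star,\mc{A}(X)-b\>$, i.e.\ that $X_\star$ minimizes the Lagrangian $L(X,y_\star)=f(X)+\<y_\star,\mc{A}(X)-b\>$ over the PSD cone, after which $\lambda\ltwo{\mc{A}(X)-b}\ge\<y_\star,\mc{A}(X)-b\>$ (Cauchy--Schwarz) shows the penalized value dominates $f(X_\star)$, with equality on $S$. This part, and the uniqueness argument in the regime $\lambda>\ltwo{y_\star}$ (equality forces $\mc{A}(X)=b$, then primal uniqueness of $X_\star$ finishes), is sound and matches the paper's proof in Appendix~\ref{appendix:proof-global-optimality}.

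The genuine gap is your treatment of the boundary case $\lambda=\ltwo{y_\star}$, which the lemma's hypothesis $\lambda\ge\ltwo{y_\star}$ includes. Your equality conditions there leave open an infeasible minimizer $X$ with $\<Z_\star,X\>=0$, tight convexity, and $\mc{A}(X)-b$ a positive multiple of $y_\star$; for such an $X$ one gets $f(X)=f(X_\star)-\<y_\star,\mc{A}(X)-b\><f(X_\star)$, but since $X$ is \emph{infeasible} this does not contradict optimality of $X_\star$, and the proposed ``feasible deformation'' is not justified --- moving $X$ back onto $\set{\mc{A}(\cdot)=b}$ can increase $f$ above $f(X_\star)$, so no contradiction is produced. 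The paper closes this case differently: it reads Assumption~\ref{assumption:dual-existence} (uniqueness of the low-rank solution together with the KKT system) as asserting that $X_\star$ is the \emph{unique} minimizer of $L(\cdot,y_\star)$ over $\set{X\succeq 0}$, so that for every $X\neq X_\star$, feasible or not, $h(X)\ge L(X,y_\star)>L(X_\star,y_\star)=f(X_\star)$, with no need to first force feasibility. To repair your write-up, either invoke that strict Lagrangian minimality at $X_\star$ (as the paper does), or weaken the claim you prove to $\lambda>\ltwo{y_\star}$, which is what the downstream results (e.g.\ Lemma~\ref{lemma:convex-growth}) actually use.
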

The proof can be
found in Appendix~\ref{appendix:proof-global-optimality}.


\paragraph{Non-uniqueness and matrix distance}
The factorization $X=RR^\top$ brings in a great deal of non-uniqueness
issues, but this non-uniqueness acts fairly nicely and satisfies the
assumptions we require in our geometry and optimization results. One
particular nice property is that the minimizing set $S$, having the form
\begin{equation*}
  S = \set{R_\star\Omega: \Omega\in\R^{r\times r}, \Omega^\top\Omega =
    I_r},
\end{equation*}
is a compact smooth manifold, and the distance
\begin{equation*}
  \dist(R, S) = \min_{\Omega\in\R^{r\times r},\Omega^\top\Omega = I_r}
  \lfro{R - R_\star\Omega}, 
\end{equation*}
is the Procrustes distance between $R$ and $R_\star$. More
background on the factorization map $X=RR^\top$ and the Procrustes
distance are provided in Appendix~\ref{appendix:background-matrix}.


\subsection{Matrix growth: the exact-rank case}
\label{section:matrix-growth-exact-rank}
If we know the exact rank $r_\star$, we could set $r=r_\star$ and
factorize with $R\in\R^{n\times r_\star}$. In this case, the Euclidean
distance in the $X$ space and the $R$ space is nicely connected, as
stated in the following bound.
\begin{lemma}[Lemma 41,~\cite{GeJiZh17}]
  \label{lemma:matrix-factorization-bound}
  Let $R,R_\star\in\R^{n\times r}$ be two matrices such that
  $R^\top R_\star\succeq 0$ (so that they are aligned in the
  Procrustes distance), then
  \begin{equation*}
    \begin{aligned}
      & \quad \lfro{RR^\top - R_\star R_\star^\top}^2 \ge
      2(\sqrt{2}-1)\lfro{R_\star(R-R_\star)^\top}^2 \\
      & \ge 
      2(\sqrt{2}-1)\sigma_r^2(R_\star) \cdot \lfro{R - R_\star}^2.
    \end{aligned}
  \end{equation*}
\end{lemma}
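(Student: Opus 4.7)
The second inequality is the easy direction: since $R_\star^\top R_\star \succeq \sigma_r^2(R_\star) I_r$, cyclic trace gives $\lfro{R_\star \Delta^\top}^2 = \operatorname{tr}(R_\star^\top R_\star\, \Delta^\top \Delta) \ge \sigma_r^2(R_\star)\lfro{\Delta}^2$, where $\Delta \defeq R - R_\star$. The content is all in the first inequality.

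For the first inequality, the plan is to expand
\[
RR^\top - R_\star R_\star^\top \;=\; H + E, \quad H \defeq R_\star \Delta^\top + \Delta R_\star^\top,\ E \defeq \Delta\Delta^\top \succeq 0,
\]
and lower-bound $\lfro{H + E}^2 = \lfro{H}^2 + 2\<H, E\> + \lfro{E}^2$ using the alignment $R^\top R_\star \succeq 0$. The alignment enters in two ways: (i) it forces $R_\star^\top \Delta$ to be symmetric, so $\lfro{H}^2 = 2\lfro{R_\star \Delta^\top}^2 + 2\lfro{R_\star^\top \Delta}^2$; and (ii) it gives the PSD relation $R_\star^\top \Delta + R_\star^\top R_\star \succeq 0$, which combined with $\Delta^\top \Delta \succeq 0$ yields $\<H, E\> = 2\operatorname{tr}((R_\star^\top \Delta)(\Delta^\top \Delta)) \ge -2\operatorname{tr}((R_\star^\top R_\star)(\Delta^\top \Delta)) = -2\lfro{R_\star\Delta^\top}^2$. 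To pin down the sharp constant, I would reduce to canonical form by left-orthogonal invariance (take $R_\star = \begin{pmatrix} D \\ 0\end{pmatrix}$, $D \succ 0$ diagonal), block-decompose $\Delta = \begin{pmatrix}A \\ B\end{pmatrix}$, diagonalize $A$ under the commutation constraint $A^\top D = DA$ that (i) imposes, and use the entrywise PSD bound $\lfro{E}^2 = \lfro{BB^\top}^2 \ge \sum_i c_i^2$, where $c_i$ are diagonal entries of $U^\top B^\top B U$. The inequality then decouples across eigencomponents and, after rescaling by $d_i$, reduces to a one-dimensional inequality
\[
\lambda^2(2+\lambda)^2 + 2(1+\lambda)^2 c + c^2 \;\ge\; 2(\sqrt{2}-1)(\lambda^2 + c), \qquad \lambda \ge -1,\ c \ge 0,
\]
which is a nonnegative quadratic in $c$ verifiable by computing its minimum.

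The main obstacle is pinning the sharp constant $2(\sqrt{2}-1)$: the bound $\<H, E\> \ge -2\lfro{R_\star \Delta^\top}^2$ from (ii) combined with $\lfro{H}^2 \ge 2\lfro{R_\star \Delta^\top}^2$ only yields $\lfro{H+E}^2 \ge -2\lfro{R_\star \Delta^\top}^2 + \lfro{E}^2$, which is negative at the tight point; both the $\lfro{E}^2$ term and the $\lfro{R_\star^\top \Delta}^2$ term must be carried through. The tight configuration $(\lambda, c) = (-1, \sqrt{2}-1)$ corresponds to $R$ losing all mass along one principal direction of $R_\star$ while acquiring orthogonal mass of magnitude $\sqrt{\sqrt{2}-1}$, realized in two dimensions by $R_\star = e_1$, $R = (\sqrt{2}-1)^{1/2}\, e_2$. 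Since the result is recorded as Lemma~41 of~\cite{GeJiZh17}, we may invoke it directly.
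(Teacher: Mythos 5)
You should first note that the paper contains no proof of this lemma at all: it is imported verbatim as Lemma 41 of \citet{GeJiZh17} and used as a black box, so your closing move of simply invoking that reference is exactly the paper's own treatment, and nothing more is required. Your independent material is partly complete and partly a sketch. The one-line argument for the second inequality is correct, and the scalar inequality you reduce to is also correct and tight: for $\lambda\ge -1$ and $c\ge 0$ the quadratic in $c$ is minimized either at $c=0$ or at $c=(\sqrt{2}-1)-(1+\lambda)^2$, and in both cases the value is nonnegative, vanishing exactly at $(\lambda,c)=(-1,\sqrt{2}-1)$, which matches your extremal configuration $R_\star=e_1$, $R=(\sqrt{2}-1)^{1/2}e_2$; likewise your alignment facts (symmetry of $R_\star^\top\Delta$ and the bound $\langle H,E\rangle\ge -2\lfro{R_\star\Delta^\top}^2$) are correct.

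The step that is not yet a proof is the reduction from the matrix inequality to that scalar one. After reducing $R_\star$ to a positive diagonal block $D$ over a zero block and splitting $\Delta$ into a top block $A$ and bottom block $B$, alignment only gives $DA=A^\top D$; this makes $A$ diagonalizable with real spectrum, but in general only in a \emph{non-orthogonal} basis (orthogonal diagonalization would require $DA$ to commute with $D$), so Frobenius norms are not preserved by the change of basis and the bound $\lfro{BB^\top}^2\ge\sum_i c_i^2$ with $c_i$ the diagonal of $U^\top B^\top BU$ is not justified for such $U$. Moreover, the expansion of $\lfro{H+E}^2$ contains the off-diagonal block $(D+A)B^\top$, whose squared norm $\tr\bigl((D+A)^\top(D+A)B^\top B\bigr)$ couples $A$ with $B^\top B$, so the claimed decoupling across eigencomponents would additionally require a simultaneous diagonalization of $D$, $A$, and $B^\top B$ that is not available in general. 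Since the lemma is cited rather than reproved, this gap is harmless for the paper; a self-contained proof would need to carry out that reduction carefully or follow the direct argument of \citet{GeJiZh17}.
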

Building on this result, we show that
when $f(X)$ has quadratic growth around $X_\star$ in the
constraint set, so does $\vphi_f(R)=f(RR^\top)$. The proof is in
Appendix~\ref{appendix:proof-factorize-growth}. 
\begin{lemma}
  \label{lemma:factorized-quadratic-growth}
  Let Assumption~\ref{assumption:low-rank-quadratic-growth} hold for
  rank $r_\star$, then for all $R\in\R^{n\times r_\star}$ such that
  $\mc{A}(RR^\top)=b$, we have
  \begin{equation*}
    \vphi_f(R) \ge \vphi_f(R_\star) + \frac{\alpha_{\vphi_f}}{2}
    \dist(R, S)^2.
  \end{equation*}
  where
  $\alpha_{\vphi_f}=2(\sqrt{2}-1)\sigma_{r_\star}^2(R_\star)\alpha_f$.
\end{lemma}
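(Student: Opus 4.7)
The plan is to reduce the lemma to a direct chain of two inequalities: the rank-$r_\star$ quadratic growth of $f$ in the matrix space, and the Procrustes-alignment bound of Lemma~\ref{lemma:matrix-factorization-bound} relating distances in the $X$ space to distances in the $R$ space.

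First, fix any feasible $R\in\R^{n\times r_\star}$ with $\mc{A}(RR^\top)=b$ lying in a small enough neighborhood of $S$ (so that the matrix $X=RR^\top$ is in the $\eps$-ball around $X_\star$ required by Assumption~\ref{assumption:low-rank-quadratic-growth}). Note $X\succeq 0$, $\mc{A}(X)=b$, and $\rank(X)\le r_\star$, so Assumption~\ref{assumption:low-rank-quadratic-growth} applies and gives
\begin{equation*}
  f(X)-f(X_\star)\ge \frac{\alpha_f}{2}\lfro{X-X_\star}^2.
\end{equation*}

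Next, choose the minimizer $\Omega\in\R^{r_\star\times r_\star}$ (orthogonal) in the Procrustes distance, so that $\dist(R,S)=\lfro{R-R_\star\Omega}$. A standard property of the Procrustes alignment (see Appendix~\ref{appendix:background-matrix}) is that the minimizer satisfies $R^\top R_\star\Omega\succeq 0$, which is the hypothesis needed in Lemma~\ref{lemma:matrix-factorization-bound}. Writing $\wt R_\star\defeq R_\star\Omega$, we have $\wt R_\star\wt R_\star^\top = R_\star R_\star^\top = X_\star$ and $\sigma_{r_\star}(\wt R_\star)=\sigma_{r_\star}(R_\star)$ because multiplication by an orthogonal matrix preserves singular values. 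Applying Lemma~\ref{lemma:matrix-factorization-bound} to the aligned pair $(R,\wt R_\star)$ therefore yields
\begin{equation*}
  \lfro{X-X_\star}^2 \;\ge\; 2(\sqrt 2-1)\,\sigma_{r_\star}^2(R_\star)\,\lfro{R-\wt R_\star}^2 \;=\; 2(\sqrt 2-1)\,\sigma_{r_\star}^2(R_\star)\,\dist(R,S)^2.
\end{equation*}

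Finally, chaining the two inequalities and recalling $\vphi_f(R)=f(RR^\top)=f(X)$ and $\vphi_f(R_\star)=f(X_\star)$ gives exactly
\begin{equation*}
  \vphi_f(R)-\vphi_f(R_\star)\ge \frac{\alpha_f}{2}\cdot 2(\sqrt 2-1)\sigma_{r_\star}^2(R_\star)\,\dist(R,S)^2 = \frac{\alpha_{\vphi_f}}{2}\,\dist(R,S)^2,
\end{equation*}
with $\alpha_{\vphi_f}=2(\sqrt 2-1)\sigma_{r_\star}^2(R_\star)\alpha_f$ as claimed. The proof has no real obstacle beyond correctly invoking the Procrustes alignment condition $R^\top\wt R_\star\succeq 0$ so that Lemma~\ref{lemma:matrix-factorization-bound} is applicable; everything else is substitution.
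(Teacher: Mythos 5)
Your proof is correct and follows essentially the same route as the paper's: plug $X=RR^\top$ into the rank-$r_\star$ quadratic growth of Assumption~\ref{assumption:low-rank-quadratic-growth}, then invoke Lemma~\ref{lemma:matrix-factorization-bound} together with the Procrustes characterization of $\dist(R,S)$ to convert $\lfro{RR^\top-R_\star R_\star^\top}$ into $\dist(R,S)$. Your explicit verification of the alignment condition $R^\top \wt R_\star \succeq 0$ and the invariance of $\sigma_{r_\star}$ under the orthogonal rotation is a slightly more careful rendering of a step the paper leaves implicit, but it is the same argument.
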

The above Lemma allows us to establish quadratic growth of $\vphi_f$
and $\vphi$
based on low-rank quadratic growth of $f$. In particular, applying
Lemma~\ref{lemma:penalized-quadratic-growth}, we get that $\vphi$ has
local quadratic growth with constant
\begin{equation*}
  \alpha_\vphi = \frac{1}{2} \cdot
  \alpha_{\vphi_f} \ge (\sqrt{2}-1)\sigma_{r_\star}(R_\star)^2\alpha_f
\end{equation*}
in a neighborhood of $R_\star$. We could then apply
Theorem~\ref{theorem:quadratic-implies-regularity} here and obtain
norm convexity and sub-differential regularity on the penalized
objective $\vphi$. We summarize this in the following theorem.
\begin{theorem}[Geometry of matrix factorization with exact rank]
  \label{theorem:geometry-exact-specified}
  Suppose $f$ is convex and satisfies
  Assumption~\ref{assumption:smooth-f},~\ref{assumption:dual-existence}, 
  Assumption~\ref{assumption:low-rank-quadratic-growth} with rank
  $r_\star$ 
  and constant $\alpha_f>0$, and
  Assumption~\ref{assumption:constraint-qualification-matrix}, then
  for sufficiently large $\lambda$, 
  $\vphi(R)=\vphi_f(R) + \lambda \vphi_g(R)$ satisfies the norm
  convexity~\eqref{equation:comparison} with constant $\ell<5$ and
  sub-differential regularity~\eqref{equation:regularity} with
  constant
  $\wt{\ell}<2(\sqrt{2}+1)\ell/(\sigma_{r_\star}(R_\star)^2\alpha_f)$.
\end{theorem}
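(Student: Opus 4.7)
The plan is to deduce Theorem~\ref{theorem:geometry-exact-specified} as a direct instantiation of the generic Theorem~\ref{theorem:quadratic-implies-regularity} applied to the composite map $c(R)=RR^\top$, the convex outer function $f$, the linear constraint $\mc{A}(RR^\top)=b$, and the penalty function $\vphi_g(R)=\ltwo{\mc{A}(RR^\top)-b}$. Since Theorem~\ref{theorem:quadratic-implies-regularity} was proved under the three generic assumptions (smoothness, quadratic growth on the feasible set, and constraint qualification), the work reduces to translating Assumptions~\ref{assumption:smooth-f}--\ref{assumption:constraint-qualification-matrix} into the generic Assumptions~\ref{assumption:smoothness}--\ref{assumption:constraint-non-degeneracy} for the factorized problem.

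First I would verify smoothness. The map $R\mapsto RR^\top$ is polynomial and hence $C^\infty$ with locally bounded and Lipschitz derivatives; composing with $f$, which under Assumption~\ref{assumption:smooth-f} has bounded Hessian and Lipschitz Hessian on a neighborhood of $X_\star$, yields the required $\smoothphif$-bounded and $\hesssmoothphif$-Lipschitz Hessian of $\vphi_f$ (with explicit constants depending on $\smoothf$, $\hesssmoothf$, $\lipf$, and $\lfro{R_\star}$). This gives Assumption~\ref{assumption:smoothness}. Next I would verify the constraint qualification: the Jacobian of $R\mapsto \mc{A}(RR^\top)$ is $2M_R^\top$ (since $[\mc{A}(RR^\top)]_i = \<A_i R, R\>$), so Assumption~\ref{assumption:constraint-qualification-matrix} gives $\sigma_{\min}$ of the Jacobian bounded below by $2\sigmacq$, yielding Assumption~\ref{assumption:constraint-non-degeneracy}; compactness and smoothness of $S=\{R_\star\Omega:\Omega^\top\Omega=I_r\}$ was already noted in the preliminaries. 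For the quadratic growth hypothesis, Lemma~\ref{lemma:factorized-quadratic-growth} turns Assumption~\ref{assumption:low-rank-quadratic-growth} at rank $r_\star$ into quadratic growth for $\vphi_f$ on the feasible manifold with constant $\alpha_{\vphi_f}=2(\sqrt{2}-1)\sigma_{r_\star}^2(R_\star)\alpha_f$, which is exactly Assumption~\ref{assumption:quadratic-growth}.

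With all three generic assumptions in hand, Theorem~\ref{theorem:quadratic-implies-regularity} applies: there is a threshold $\Lambdaall$ such that for $\lambda\ge\Lambdaall$ on a neighborhood $\mc{N}(S,\eps)$ the penalized objective $\vphi$ satisfies norm convexity~\eqref{equation:comparison} with constant $\ell\le 5$. Sub-differential regularity then follows with $\wt{\ell}=4\ell/\alpha_{\vphi_f}$; plugging in the value of $\alpha_{\vphi_f}$ and using $1/(\sqrt{2}-1)=\sqrt{2}+1$ gives
\begin{equation*}
\wt{\ell} \;=\; \frac{4\ell}{2(\sqrt{2}-1)\sigma_{r_\star}^2(R_\star)\alpha_f} \;=\; \frac{2(\sqrt{2}+1)\ell}{\sigma_{r_\star}^2(R_\star)\alpha_f},
\end{equation*}
which matches the claimed bound.

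The main obstacle, which is really a bookkeeping step rather than a genuinely hard one, is the clean translation of Assumption~\ref{assumption:constraint-qualification-matrix} into Assumption~\ref{assumption:constraint-non-degeneracy}: one must check that the full row rank and quantitative singular value bound on $M_R$ hold uniformly on a neighborhood of $S$ (not just at $R_\star$), which follows from continuity of $R\mapsto M_R$ and compactness of $S$. The remaining verifications are routine, and once the ingredients are assembled the conclusion is an immediate corollary of Theorem~\ref{theorem:quadratic-implies-regularity}.
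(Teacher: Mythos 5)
Your proposal is correct and follows essentially the same route as the paper: the paper also obtains this theorem by feeding the factorized quadratic growth of Lemma~\ref{lemma:factorized-quadratic-growth} (with $\alpha_{\vphi_f}=2(\sqrt{2}-1)\sigma_{r_\star}^2(R_\star)\alpha_f$), together with Lemma~\ref{lemma:penalized-quadratic-growth}, into Theorem~\ref{theorem:quadratic-implies-regularity}, and your constant computation $\wt{\ell}=4\ell/\alpha_{\vphi_f}=2(\sqrt{2}+1)\ell/(\sigma_{r_\star}(R_\star)^2\alpha_f)$ matches the claimed bound. Your explicit bookkeeping of the smoothness and constraint-qualification translations (Jacobian $2M_R^\top$, uniform lower singular value bound near $S$) only spells out what the paper leaves implicit.
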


\subsection{Matrix growth: the rank over-specified case}
\label{section:over-specification}
In many real problems, the true rank $r_\star$ cannot be known
exactly. In these cases, a common strategy is to conjecture an upper
bound $r$ on the rank and factorize $X=UU^\top$ for
$U\in\R^{n\times r}$. We show that over-specifying the
rank will preserve quadratic growth when $f$ is linear. Hence, for
solving SDPs, local linear convergence can still be achieved
when we over-specify the rank. In constrast, quadratic growth will not
be preserved with generic convex $f$.

\subsubsection{Quadratic growth is not preserved in general}
\label{section:over-specification-fails}
Recall that when converting
quadratic growth of $f$ into that of $\vphi_f$
(Lemma~\ref{lemma:factorized-quadratic-growth}), we relied on
Lemma~\ref{lemma:matrix-factorization-bound}, which
says that $\lfro{RR^\top-R_\star R_\star^\top}$ grows at least
linearly in $\lfro{R-R_\star}$:
\begin{equation*}
  \lfro{RR^\top-R_\star R_\star^\top} \ge
  \sqrt{2(\sqrt{2}-1)} \sigma_r(R_\star) \cdot \lfro{R-R_\star}.
\end{equation*}
This bound requires $\sigma_r(R_\star)>0$, so if we used an upper
bound $r>r_\star$ and factorized $X_\star=U_\star U_\star^\top$ for
$U_\star\in\R^{n\times r}$, then $\sigma_r(U_\star)=0$, and the bound
becomes vacuous. The following example demonstrates that the growth
can indeed be slower when we over-specify the rank.

\begin{example}
  \label{example:overspecified-rank}
  Let $R_\star\in\R^{n\times r_\star}$ have full column rank and
  $U_\star=[R_\star,\zeros_n]\in\R^{n\times(r_\star+1)}$, so that
  $U_\star U_\star^\top = X_\star$. Let
  $U=[R_\star,y]\in\R^{n\times(r_\star+1)}$, where $y\in\R^n$ is such
  that $R_\star^\top = \zeros$. Then,
  \begin{equation*}
    U^\top U_\star = \begin{bmatrix}
      R_\star^\top R_\star & \zeros_r \\
      y^\top R_\star & 0
    \end{bmatrix} = \begin{bmatrix}
      R_\star^\top R_\star & \zeros_r \\
      \zeros_r^\top & 0
    \end{bmatrix} \succeq 0,
  \end{equation*}
  so $U$ and $U_\star$ are optimally aligned. However, we
  have
  \begin{equation*}
    \lfro{UU^\top - U_\star U_\star^\top} =
    \lfro{R_\star R_\star^\top + yy^\top - R_\star R_\star^\top} =
    \ltwo{y}^2
  \end{equation*}
  and
  \begin{equation*}
    \lfro{U - U_\star} = \lfro{[R_\star,y] -
      [R_\star,\zeros_n]} = \ltwo{y}.
  \end{equation*}
  The distance in the PSD space depends quadratically in the distance
  in the low-rank space, not linearly.

  Taking $h(X)=\frac{1}{2}\lfro{X-X_\star}^2$ for example, the
  factorized version $\vphi(U)=h(UU^\top)$ will only have fourth-order
  growth around $U_\star$ in certain directions. The prox-linear
  algorithm will \emph{not} have local linear convergence due to this
  slow growth, for the same reason that gradient descent will not
  converge linearly on $f(x)=x^4$.
  
\end{example}

Knowing that $\lfro{UU^\top-U_\star U_\star^\top}$ can be
$O(\dist(U,U_\star)^2)$ (as opposed to linear in the distance), we
extend Lemma~\ref{lemma:matrix-factorization-bound} in the following
result, showing that a quadratic lower bound is indeed
true. Consequently, any problem~\eqref{problem:constrained-sdp} with
quadratic growth will have at least fourth-order growth under rank
over-specification.
\begin{lemma}[Matrix growth bound under rank over-specification]
  \label{lemma:overspecified-factorization-bound}
  Let $U_\star,U\in\R^{n\times r}$ be such that
  $U^\top U_\star\succeq 0$, and $\Delta=U-U_\star$. Then
  \begin{equation*}
    \begin{aligned}
      & \quad \lfro{UU^\top - U_\star U_\star^\top}^2
      \ge \frac{2(\sqrt{2}-1)}{9r}\lfro{\Delta}^4.
    \end{aligned}
  \end{equation*}
\end{lemma}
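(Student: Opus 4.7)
The plan is first to reduce the problem from an $n \times n$ Frobenius-norm estimate to an $r \times r$ one via the unconditional identity $\lfro{UU^\top - U_\star U_\star^\top}^2 = \lfro{U^\top U - U_\star^\top U_\star}^2$, which follows from $\mathrm{tr}((UU^\top)^2) = \mathrm{tr}((U^\top U)^2)$ and $\mathrm{tr}(UU^\top U_\star U_\star^\top) = \lfro{U^\top U_\star}^2$. Under the alignment $U^\top U_\star \succeq 0$ the matrix $U_\star^\top \Delta$ is symmetric, so the right-hand side expands as $\lfro{2 U_\star^\top \Delta + \Delta^\top \Delta}^2$. I would then rotate into the SVD coordinates of $U_\star$, so that, without loss of generality, $U_\star = \begin{bmatrix}\Sigma_0 \\ 0\end{bmatrix}$ with $\Sigma_0 \in \R^{r\times r}$ diagonal whose only nonzero entries sit in the first $r_\star = \mathrm{rank}(U_\star)$ positions. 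The alignment specializes to a symmetry condition on the top-left $r_\star \times r_\star$ block of $U$ and forces the block pairing the first $r_\star$ rows of $U$ with the last $r - r_\star$ columns of $U$ to vanish.

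In these coordinates the first $r_\star$ columns of $U$ form a matrix $\tilde R$, with counterpart $\tilde R_\star$ for $U_\star$; together they form an exact-rank pair in $\R^{n \times r_\star}$ with $\tilde R_\star$ of full column rank $r_\star$. The remaining $r - r_\star$ columns of $U$ form a matrix $Q' \in \R^{n \times (r-r_\star)}$ whose first $r_\star$ rows vanish. This yields the splitting $M = (\tilde R \tilde R^\top - \tilde R_\star \tilde R_\star^\top) + Q'(Q')^\top$; the cross inner product evaluates to $\lfro{(Q')^\top \tilde R}^2 \ge 0$ since $(Q')^\top \tilde R_\star = 0$, giving
\begin{equation*}
  \lfro{M}^2 \ge \lfro{\tilde R \tilde R^\top - \tilde R_\star \tilde R_\star^\top}^2 + \lfro{Q'(Q')^\top}^2.
\end{equation*}
The over-specified piece is controlled by Cauchy--Schwarz on the eigenvalues of $Q^\top Q$: $\lfro{Q'(Q')^\top}^2 = \lfro{Q^\top Q}^2 \ge \lfro{Q}^4/(r - r_\star)$, matching the fourth-order behavior from Example~\ref{example:overspecified-rank}. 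The exact-rank piece admits Lemma~\ref{lemma:matrix-factorization-bound} directly since alignment is preserved for $(\tilde R, \tilde R_\star)$.

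Writing $p = \lfro{\tilde R - \tilde R_\star}^2$ and $q = \lfro{Q}^2$ so that $p + q = \lfro{\Delta}^2$, it remains to show the $\tilde R$ piece is bounded below by an $\Omega(p^2/r)$ fourth-order quantity; combined with the $q^2/(r-r_\star)$ bound and the Cauchy--Schwarz inequality $p^2 + q^2 \ge (p+q)^2/2$, this produces the target up to the stated constant. The main obstacle is producing this fourth-order $p^2$ lower bound for the exact-rank piece uniformly in $\sigma_{r_\star}(U_\star)$, since Lemma~\ref{lemma:matrix-factorization-bound} directly yields only the second-order bound $\Omega(\sigma_{r_\star}^2(U_\star) \cdot p)$, which is weak in the intermediate regime of small positive $\sigma_{r_\star}$. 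The cleanest route is to apply the over-specified lemma recursively on the sub-problem $(\tilde R, \tilde R_\star) \in \R^{n \times r_\star}$ (induction on the column dimension $r$), yielding $\lfro{\tilde R \tilde R^\top - \tilde R_\star \tilde R_\star^\top}^2 \ge \frac{2(\sqrt 2 - 1)}{9 r_\star}p^2$; the base case $r = 1$ follows from the direct closed-form calculation of $\lfro{uu^\top - u_\star u_\star^\top}^2$ for scalars $u, u_\star$ with $u \cdot u_\star \ge 0$, and the factor $9$ in the denominator arises from the constants in balancing this recursion against the $q^2/(r-r_\star)$ over-specified contribution.
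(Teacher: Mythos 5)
Your opening reduction is incorrect: $\lfro{UU^\top - U_\star U_\star^\top}^2$ and $\lfro{U^\top U - U_\star^\top U_\star}^2$ are \emph{not} equal, because the cross terms differ --- $\tr\bigl(UU^\top U_\star U_\star^\top\bigr) = \lfro{U_\star^\top U}^2$ while $\tr\bigl((U^\top U)(U_\star^\top U_\star)\bigr) = \lfro{U U_\star^\top}^2$, and these generally disagree. Concretely, take $n=2$, $r=1$, $U = e_1$, $U_\star = e_2$: then $U^\top U_\star = 0 \succeq 0$, $\lfro{UU^\top - U_\star U_\star^\top}^2 = 2$, but $\lfro{U^\top U - U_\star^\top U_\star}^2 = 0$. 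Worse than being a false identity, \emph{any} reduction to the Gram difference is doomed: in this example the Gram difference vanishes while the lemma's target lower bound $\tfrac{2(\sqrt{2}-1)}{9}\lfro{\Delta}^4 = \tfrac{8(\sqrt{2}-1)}{9} > 0$, so $\lfro{2U_\star^\top\Delta + \Delta^\top\Delta}$ cannot control $\lfro{\Delta}^4$; the Gram matrices are blind to the column-space misalignment that $\lfro{UU^\top - U_\star U_\star^\top}$ must detect. (Your later column-block splitting $UU^\top = \tilde R\tilde R^\top + Q'(Q')^\top$ only makes sense in the $n\times n$ outer-product domain anyway, so the first step is at best an unnecessary detour and at worst fatal if relied upon.)

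The second, more substantive gap is exactly the one you flag and do not close: the fourth-order lower bound for the exact-rank piece. Your induction on the column dimension makes progress only when $U_\star$ is rank-deficient; when $U_\star$ (or, in your notation, $\tilde R_\star$) has full column rank there is no trailing block to split off, and invoking the lemma at the same dimension is circular, so for every $r\ge 2$ the full-rank case is left unproved --- and no second-order bound of the form $\sigma_{r_\star}^2(U_\star)\lfro{\Delta}^2$ can substitute, since for large $\lfro{\Delta}$ a quartic lower bound is strictly stronger regardless of $\sigma_{r_\star}(U_\star)$. The paper closes precisely this point with a short case analysis that needs no rotation or rank split: if $\lfro{U_\star\Delta^\top} \ge \tfrac{1}{3}\lfro{\Delta\Delta^\top}$, the first inequality of Lemma~\ref{lemma:matrix-factorization-bound} gives $\lfro{UU^\top - U_\star U_\star^\top}^2 \ge 2(\sqrt{2}-1)\lfro{U_\star\Delta^\top}^2 \ge \tfrac{2(\sqrt{2}-1)}{9}\lfro{\Delta\Delta^\top}^2$; otherwise the triangle inequality applied to $UU^\top - U_\star U_\star^\top = U_\star\Delta^\top + \Delta U_\star^\top + \Delta\Delta^\top$ gives $\lfro{UU^\top - U_\star U_\star^\top} \ge \lfro{\Delta\Delta^\top} - 2\lfro{U_\star\Delta^\top} \ge \tfrac{1}{3}\lfro{\Delta\Delta^\top}$; in either case $\lfro{\Delta\Delta^\top}^2 = \sum_i\sigma_i(\Delta)^4 \ge \tfrac{1}{r}\lfro{\Delta}^4$ finishes the proof. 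Finally, even granting your induction, the combination step via $p^2+q^2 \ge (p+q)^2/2$ recovers the stated constant only when $r \ge 2r_\star$; you would need a weighted Young inequality with weights proportional to $r_\star$ and $r-r_\star$ (and the base case involves vectors $u, u_\star \in \R^n$, not scalars, though that case is indeed checkable directly).
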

The proof can be found in
Appendix~\ref{appendix:proof-overspecified-factorization}. 

\subsubsection{Quadratic growth is preserved on linear objectives}
\label{section:over-specification-succeeds}
In contrast to the generic case,
we show that for linear objectives, rank
over-specification preserves quadratic growth under some mild
additional assumptions. Letting $f(X)=\<C,X\>$ for $C\in\symm^n$,
problem~\eqref{problem:constrained-sdp} reads
\begin{equation}
  \label{problem:sdp}
  \begin{aligned}
    \minimize & ~~ \<C,X\> \\
    \subjectto & ~~ \mc{A}(X) = b \\
    & ~~ X \succeq 0.
  \end{aligned}
\end{equation}
Problem~\eqref{problem:sdp} admits a rank-$r_\star$ solution
$X_\star=R_\star R_\star^\top$, where $R_\star\in\R^{n\times r_\star}$
has full column rank
Let $X_\star=Q_1\Sigma_x Q_1^\top$ be the eigenvalue decomposition,
where $Q_1\in\R^{n\times r_\star}$ and
$\Sigma_x\in\R^{r_\star\times r_\star}$ is diagonal. We can then take
$R_\star=Q_1\sqrt{\Sigma_x}$. Let $Q_2\in\R^{n\times(n-r_\star)}$ be
the orthogonal complement of $Q_1$, i.e. such that
$[Q_1, Q_2]\in\R^{n\times n}$ is an orthogonal matrix.

We will assume an additional assumption on the dual SDP
(cf.~\eqref{equation:dual-optimum}), which is
a mild condition that guarantees a low
rank solution of an SDP is unique~\cite{AlizadehHaOv97}.

\begin{assumption}[Strict complementarity and dual non-degeneracy]
  \label{assumption:dn}
  There exists a pair of dual optimal $(y_\star,Z_\star)$ such that
  the $(r_\star+1)$-th smallest singular value of $Z_\star$ is lower
  bounded by $\eigvaldual>0$. There exists some constant $\sigmadn>0$
  such that
  \begin{equation*}
    \sum_{i\in[k]} \<Q_1^\top A_iQ_1, W\>^2 \ge \sigmadn \cdot
    \lfro{W}^2,~\text{for all}~W\in\symm^{r_\star}.
  \end{equation*}
\end{assumption}
Assuming this condition, we can lower bound the growth of $f$ as
follows. For any $X\succeq 0$, $\mc{A}(X)=b$, we have
$\<\mc{A}^*(y_\star),X-X_\star\>=\<y_\star,\mc{A}(X)-\mc{A}(X_\star)\>=0$. Hence
\begin{equation}
  \label{equation:linear-growth}
  \begin{aligned}
    & \quad \<C,X-X_\star\> = \<\mc{A}^*(y_\star)+C,X-X_\star\> \\
    & = \<Z_\star,X-X_\star\> = 
    \<Z_\star,X\> \ge \eigvaldual\lnuc{Q_2^\top XQ_2}.
  \end{aligned}
\end{equation}
The following result further lower bounds $\lnuc{Q_2^\top XQ_2}$ by
$\dist(U, U_\star)^2$, and thereby showing the quadratic growth of
$\vphi_f(U)$. See Appendix~\ref{appendix:proof-linear-overspecified}
for the proof.
\begin{lemma}
  \label{lemma:overspecified-growth}
  Let $r\ge r_\star$, $X_\star=U_\star U_\star^\top$ and $X=UU^\top$ for
  $U,U_\star\in\R^{n\times r}$.
  Suppose Assumption~\ref{assumption:dn} holds, and the feasible set
  \begin{equation*}
    \mc{M}_r \defeq \set{U\in\R^{n\times r}:\mc{A}(UU^\top)=b}
  \end{equation*}
  is a smooth manifold,
  then there exists a neighborhood of $U_\star$ and a positive
  constant $c>0$ such that for all feasible $X=UU^\top$ in this
  neighborhood, 
  \begin{equation*}
    \lnuc{Q_2^\top XQ_2} \ge c \cdot \dist(U,U_\star)^2,
  \end{equation*}
  where the constant $c$ is
  \begin{equation*}
    c = \frac{1}{2\kappa+1},~~~ \kappa \defeq
    \frac{4\lambda_1(X_\star)}{\lambda_{r_\star}(X_\star)} \cdot 
    \frac{\sum_{i=1}^k \lfro{Q_2^\top A_iQ_1}^2}{\sigmadn}.
  \end{equation*}
  Consequently, $\vphi_f$ have local quadratic growth around $U_\star$
  with constant $\alpha_{\vphi_f}=2c\eigvaldual$. 
\end{lemma}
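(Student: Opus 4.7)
The plan is to pass to coordinates adapted to the spectral decomposition $X_\star = Q_1\Sigma_x Q_1^\top$ and argue that the feasibility constraint combined with dual non-degeneracy forces the ``tangential'' part of $U-U_\star$ to be dominated by the ``normal'' part $Q_2^\top U$; chaining with~\eqref{equation:linear-growth} then converts the nuclear-norm bound on $Q_2^\top X Q_2$ into quadratic growth of $\vphi_f$. By passing to the Procrustes-aligned $U_\star$, I assume $U^\top U_\star \succeq 0$; taking $R_\star = Q_1\sqrt{\Sigma_x}$ and $U_\star = [R_\star, 0]$, I decompose
\begin{equation*}
A = Q_1^\top U = [A_1, A_2], \qquad B = Q_2^\top U = [B_1, B_2],
\end{equation*}
with $A_1 \in \R^{r_\star \times r_\star}$ and $B_2 \in \R^{(n-r_\star) \times (r-r_\star)}$. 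A short computation shows that alignment forces $A_2 = 0$ and that $A_1^\top \sqrt{\Sigma_x}$ is symmetric PSD; writing $\Delta_1 = A_1 - \sqrt{\Sigma_x}$, this symmetry is equivalent to $\sqrt{\Sigma_x}\Delta_1^\top = \Delta_1 \sqrt{\Sigma_x}$. Since $[Q_1, Q_2]$ is orthogonal, $\lnuc{Q_2^\top UU^\top Q_2} = \lfro{B}^2$ and $\dist(U, U_\star)^2 = \lfro{\Delta_1}^2 + \lfro{B}^2$, so the claim reduces to proving $\lfro{\Delta_1}^2 \le 2\kappa\,\lfro{B}^2$ in a neighborhood of $U_\star$.

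The central estimate uses feasibility. Write $\tilde A_{ab,i} = Q_a^\top A_i Q_b$ and expand $A_1 A_1^\top - \Sigma_x = 2\Delta_1\sqrt{\Sigma_x} + \Delta_1\Delta_1^\top$ (using the alignment symmetry). The constraint $\<A_i, X - X_\star\> = 0$ becomes
\begin{equation*}
2\<\tilde A_{11,i},\, \Delta_1 \sqrt{\Sigma_x}\> = -2\<\tilde A_{12,i},\, A_1 B_1^\top\> - \<\tilde A_{22,i},\, BB^\top\> - \<\tilde A_{11,i},\, \Delta_1 \Delta_1^\top\>.
\end{equation*}
I apply Assumption~\ref{assumption:dn} to the symmetric matrix $W = 2\Delta_1 \sqrt{\Sigma_x} \in \symm^{r_\star}$, which gives $\sigmadn \lfro{W}^2 \le \sum_i \<\tilde A_{11,i}, W\>^2$. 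The left side dominates $4\sigmadn \lambda_{r_\star}(X_\star) \lfro{\Delta_1}^2$, while Cauchy--Schwarz on the right side, together with the feasibility identity above, extracts the leading term $C\,\lambda_1(X_\star) \sum_i \lfro{\tilde A_{12,i}}^2 \cdot \lfro{B}^2$ plus quartic remainders bounded by $O((\lfro{\Delta_1}^2 + \lfro{B}^2)^2)$. Rearranging yields $\lfro{\Delta_1}^2 \le \kappa\,\lfro{B}^2 + \text{h.o.t.}$ with $\kappa$ as stated; shrinking the neighborhood of $U_\star$ absorbs the remainders into the slack and gives $\lfro{\Delta_1}^2 \le 2\kappa\,\lfro{B}^2$, whence $\lfro{B}^2 \ge (1/(2\kappa+1))\,\dist(U, U_\star)^2 = c\,\dist(U, U_\star)^2$.

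The growth statement then follows immediately from~\eqref{equation:linear-growth}: $\vphi_f(U) - \vphi_f(U_\star) = \<C, X - X_\star\> \ge \eigvaldual \lnuc{Q_2^\top X Q_2} \ge c\,\eigvaldual\,\dist(U, U_\star)^2$, giving $\alpha_{\vphi_f} = 2c\,\eigvaldual$. The main obstacle is the bookkeeping in the central step: the feasibility equation is genuinely nonlinear in $(\Delta_1, B)$, so I must verify that the quartic corrections coming from $\<\tilde A_{22,i}, BB^\top\>$ and $\<\tilde A_{11,i}, \Delta_1\Delta_1^\top\>$ are truly negligible next to the linear bound. The smooth-manifold hypothesis on $\mc{M}_r$ is what makes this absorption legal: it ensures that the tangent space at $U_\star$ matches the linearization of the constraint, so locally $\lfro{\Delta_1}$ and $\lfro{B}$ are of comparable order and the quartic terms are genuinely subordinate to the quadratic ones.
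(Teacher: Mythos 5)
Your proposal is correct and follows essentially the same route as the paper's proof: Procrustes alignment, a block decomposition along $Q_1,Q_2$ in which alignment forces the extra tangential block to vanish and makes the tangential increment times $\sqrt{\Sigma_x}$ symmetric, the feasibility identity plus Assumption~\ref{assumption:dn} to bound $\lfro{\Delta_1}^2$ by $2\kappa\lfro{B}^2$ after absorbing quartic remainders in a small neighborhood, and finally chaining with~\eqref{equation:linear-growth} to get $\alpha_{\vphi_f}=2c\eigvaldual$ (the paper packages the quartic control as Lemma~\ref{lemma:quadratic-normal-component} and applies dual non-degeneracy to the symmetrization of $V_\Delta V_\star^\top$, but these are cosmetic differences). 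One small caveat: your closing claim that the smooth-manifold hypothesis on $\mc{M}_r$ is what legitimizes the absorption (and that $\lfro{\Delta_1}$ and $\lfro{B}$ are locally of comparable order) is neither needed nor accurate—the quartic terms $\<\tilde A_{22,i},BB^\top\>$ and $\<\tilde A_{11,i},\Delta_1\Delta_1^\top\>$ are explicitly $O(\lfro{\Delta}^4)$ by Cauchy--Schwarz and are absorbed for $\lfro{\Delta}\le\eps$ directly, exactly as in the paper—so this remark should be dropped but does not affect the validity of your argument.
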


By the above Lemma, we establish quadratic growth of the rank
over-specified objective $\<C,UU^\top\>$ and thereby verifying
Assumption~\ref{assumption:low-rank-quadratic-growth} for the
constrained problem. As a direct consequence, we obtain norm convexity
and sub-differential regularity from
Theorem~\ref{theorem:quadratic-implies-regularity}. We summarize this
in the following theorem.
\begin{theorem}[Geometry of factorized SDPs with rank
  over-specification]
  \label{theorem:linear-objective-over-specified}
  Suppose $f(X)=\<C,X\>$ and
  Assumptions~\ref{assumption:constraint-qualification-matrix}
  and~\ref{assumption:dn} hold for rank $r$, then the solution to
  problem~\eqref{problem:sdp} is unique, and
  $\vphi_f(U)=\<C,UU^\top\>$ has local quadratic growth: for all $U$
  near $U_\star$ and feasible,
  \begin{equation*}
    \vphi_f(U) - \vphi_f(U_\star) \ge \frac{\alpha_{\vphi_f}}{2}
    \cdot \dist(U,U_\star)^2,
  \end{equation*}
  where $\alpha_{\vphi_f}>0$ depends on
  $(X_\star,\mc{A},\lambda,\sigmadn)$ but not $r$ (so that
  Assumption~\ref{assumption:low-rank-quadratic-growth} holds).
  Further, for sufficiently
  large $\lambda$,
  $\vphi(U)=\vphi_f(U) + \lambda \vphi_g(U)$ satisfies the norm
  convexity~\eqref{equation:comparison} and sub-differential
  regularity~\eqref{equation:regularity} (with $R$ replaced by $U$).
\end{theorem}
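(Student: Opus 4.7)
}
The plan is to chain the ingredients already assembled: Lemma~\ref{lemma:overspecified-growth} supplies a quadratic lower bound on $\lnuc{Q_2^\top X Q_2}$ in terms of $\dist(U,U_\star)^2$; the KKT identity~\eqref{equation:linear-growth} converts it into a bound on the objective gap; and Theorem~\ref{theorem:quadratic-implies-regularity} then promotes quadratic growth into norm convexity and sub-differential regularity. I address the three conclusions in order.

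\emph{Uniqueness of $X_\star$.} Suppose $X\succeq 0$ is feasible and $\<C,X\>=\<C,X_\star\>$. Substituting into~\eqref{equation:linear-growth} forces $\lnuc{Q_2^\top XQ_2}=0$, and since $X\succeq 0$ this means $X=Q_1WQ_1^\top$ for some $W\in\symm^{r_\star}$, $W\succeq 0$. Feasibility gives $\mc{A}(X)=\mc{A}(X_\star)$, i.e.\ $\<Q_1^\top A_iQ_1,W-\Sigma_x\>=0$ for every $i$. The dual non-degeneracy bound in Assumption~\ref{assumption:dn} then yields $W=\Sigma_x$, so $X=X_\star$.

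\emph{Quadratic growth of $\vphi_f$.} For any feasible $U$ near $U_\star$, combine~\eqref{equation:linear-growth} with Lemma~\ref{lemma:overspecified-growth}:
\begin{equation*}
\vphi_f(U)-\vphi_f(U_\star) \;=\; \<C,UU^\top-X_\star\>
\;\ge\; \eigvaldual\cdot \lnuc{Q_2^\top UU^\top Q_2}
\;\ge\; c\,\eigvaldual\,\dist(U,U_\star)^2.
\end{equation*}
Setting $\alpha_{\vphi_f}=2c\eigvaldual$ gives the claimed growth. The constant $c$ from Lemma~\ref{lemma:overspecified-growth} depends on $\lambda_1(X_\star)$, $\lambda_{r_\star}(X_\star)$, the quantities $\lfro{Q_2^\top A_iQ_1}$, and $\sigmadn$, none of which depend on the factorization rank $r$. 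To upgrade the bound from $\dist(U,U_\star)$ to $\dist(U,S)$, I exploit the $O(r)$-invariance of $c(U)=UU^\top$: picking $\Omega_\star$ attaining the Procrustes minimum and applying the argument with $U_\star$ replaced by $U_\star\Omega_\star$ gives $\dist(U,U_\star\Omega_\star)=\dist(U,S)$, verifying Assumption~\ref{assumption:low-rank-quadratic-growth} around the full minimizing orbit.

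\emph{Norm convexity and sub-differential regularity.} The remaining conclusion follows from Theorem~\ref{theorem:quadratic-implies-regularity} once its three hypotheses are checked for $\vphi=\vphi_f+\lambda\vphi_g$. Smoothness (Assumption~\ref{assumption:smoothness}) is immediate since $\vphi_f(U)=\<C,UU^\top\>$ is a polynomial in $U$, giving locally bounded and Lipschitz Hessian and local Lipschitzness in a bounded neighborhood of $S$. Constraint qualification (Assumption~\ref{assumption:constraint-non-degeneracy}) is precisely Assumption~\ref{assumption:constraint-qualification-matrix}, noting that $A\grad c(U)^\top$ evaluated in the vectorized parametrization is $M_U^\top$. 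Quadratic growth (Assumption~\ref{assumption:quadratic-growth}) was established in the preceding step. Thus for $\lambda\ge\Lambdaall$ the conclusions~\eqref{equation:comparison} and~\eqref{equation:regularity} hold with $U$ in place of $R$.

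\emph{Main obstacle.} The only non-routine point is the second step: Lemma~\ref{lemma:overspecified-growth} is stated at the single point $U_\star$, but Theorem~\ref{theorem:quadratic-implies-regularity} requires growth around the entire orbit $S$. The $O(r)$-invariance argument above resolves this, but one must be careful that the constant $c$ and the size of the neighborhood are uniform over $\Omega$; this is automatic because the problem data $(\mc{A},C)$ and the dual quantities $(y_\star,Z_\star,\sigmadn,\eigvaldual)$ are $\Omega$-independent while the Procrustes map $U\mapsto U_\star\Omega_\star(U)$ is continuous.
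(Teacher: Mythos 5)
Your proposal is correct and follows essentially the same route as the paper: the KKT/strict-complementarity bound~\eqref{equation:linear-growth} combined with Lemma~\ref{lemma:overspecified-growth} gives the quadratic growth (with the $r$-independent constant $\alpha_{\vphi_f}=2c\eigvaldual$, and the alignment/orbit issue handled exactly as in the paper via the $O(r)$-invariance of $U\mapsto UU^\top$), after which Theorem~\ref{theorem:quadratic-implies-regularity} yields norm convexity and sub-differential regularity. The only difference is that you give a short self-contained uniqueness argument from dual non-degeneracy, where the paper simply invokes the known result of~\citet{AlizadehHaOv97}; this is a fine (and slightly more explicit) substitute.
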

Theorem~\ref{theorem:linear-objective-over-specified} applies
generally to the Burer-Monteiro factorization of SDPs, 
and spells out the reason why over-specifying the rank often works in
practice -- quadratic growth is carried to from $f(X)=\<C,X\>$ to
$\vphi_f(U)=\<C,UU^\top\>$, as long as dual non-degeneracy holds.

\subsection{Algorithmic consequences}
We now adapt Theorem~\ref{theorem:linear-convergence} in both the
exact rank case or the rank over-specified case with linear objectives
to obtain local linear convergence of the prox-linear algorithm.
\begin{theorem}[Local linear convergence of
  factorized semidefinite optimization]
  \label{theorem:sdp-linear-convergence}
  Under the settings of Theorem~\ref{theorem:geometry-exact-specified}
  or~\ref{theorem:linear-objective-over-specified}, let
  $\alpha_{\vphi}$ be the local quadratic growth constant of $\vphi$.
  Initializing sufficiently close to the minimizing set $S$, the
  prox-linear algorithm converges linearly:
  \begin{equation*}
    \vphi(R^{k+1}) - \vphi_\star \le q(\vphi(R^k) - \vphi_\star),
  \end{equation*}
  where
  \begin{equation*}
    q \defeq 1 -
    \frac{1}{209+5000(L_f+\lambda\opnorm{\mc{A}})/\alpha_\vphi}.
  \end{equation*}
  If we initialize in a sufficiently small neighborhood of $S$ and let
  $\lambda=\Lambda$ be the lowest possible choice as provided in
  Theorem~\ref{theorem:quadratic-implies-regularity}, then the linear
  rate is $q=1-M^{-1}$ with
  \begin{equation*}
    M = O\left( \frac{\lipf + \opnorm{\mc{A}}
        \ltwo{y_\star}\sigmamax/\sigmacq}{\alpha_\vphi} \right).
  \end{equation*}
\end{theorem}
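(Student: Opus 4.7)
The plan is to invoke the generic local linear convergence result (Theorem~\ref{theorem:linear-convergence}) on the factorized penalty objective~\eqref{problem:penalized-bm}, viewed in composite form with inner map $c(R)=RR^\top$ and convex outer function $h(Y)=f(Y)+\lambda\ltwo{\mc{A}(Y)-b}$ (convex, as the sum of convex $f$ and a seminorm composed with an affine map). The four ingredients required to apply that theorem are (i) quadratic growth of $\vphi$ with constant $\alpha_\vphi$, (ii) norm convexity of $\vphi$ with constant $\ell$, (iii) the proximity property for prox-linear iterates near $S$, and (iv) quantitative values of the outer-Lipschitz constant $L$ and inner-smoothness constant $\beta$, which are then substituted into the generic rate $q=1-1/(9+40\ell+100\ell^2 L\beta/\alpha_\vphi)$.

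Items (i) and (ii) are supplied directly by the preceding geometry theorems: Theorem~\ref{theorem:geometry-exact-specified} in the exact-rank case and Theorem~\ref{theorem:linear-objective-over-specified} in the rank-overspecified linear-objective case each deliver norm convexity with $\ell\le 5$ together with the stated $\alpha_\vphi$, provided $\lambda\ge\Lambdaall$. For (iv), two short calculations suffice. First, any subgradient of $h$ at $Y$ near $X_\star$ decomposes as $\grad f(Y)+\lambda\mc{A}^\ast v$ for some $v$ with $\ltwo{v}\le 1$, so its Frobenius norm is at most $\lipf+\lambda\opnorm{\mc{A}}$, giving $L\le\lipf+\lambda\opnorm{\mc{A}}$. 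Second, for $c(R)=RR^\top$ the Jacobian acts as $\Delta\mapsto R\Delta^\top+\Delta R^\top$, so $\opnorm{\grad c(R_1)-\grad c(R_2)}\le 2\opnorm{R_1-R_2}\le 2\lfro{R_1-R_2}$, i.e.\ $\beta=2$. Inserting $\ell\le 5$, $\beta=2$, and $L\le\lipf+\lambda\opnorm{\mc{A}}$ into the generic formula yields the claimed denominator $9+200+100\cdot 25\cdot 2(\lipf+\lambda\opnorm{\mc{A}})/\alpha_\vphi=209+5000(\lipf+\lambda\opnorm{\mc{A}})/\alpha_\vphi$.

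The main delicate ingredient is (iii), the proximity property: I need to verify that prox-linear iterates initialized sufficiently close to $S$ remain in the neighborhood where the local constants hold. Here I would combine two things: the descent property $\vphi(R^{k+1})\le\vphi(R^k)$ (valid once $t\le(L\beta)^{-1}$ because $\vphi_t(R^k;\cdot)$ majorizes $\vphi$ by~\eqref{equation:quadratic-approximation}), and the sub-differential regularity bound $\dist(R,S)\le\wt{\ell}\cdot\dist(0,\partial\vphi(R))$ from Theorem~\ref{theorem:quadratic-implies-regularity}. Compactness of $S$ (guaranteed by Assumption~\ref{assumption:constraint-qualification-matrix}) and the smooth manifold structure of $S$ make the local constants apply uniformly; this is the standard basin-of-attraction bookkeeping that accompanies Theorem~\ref{theorem:linear-convergence}.

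Finally, for the explicit $M$ when $\lambda=\Lambdaall$ is chosen minimally, I would trace the penalty thresholds accumulated along the way: Lemma~\ref{lemma:global-optimality} forces $\lambda\ge\ltwo{y_\star}$, while Lemma~\ref{lemma:penalized-quadratic-growth} requires $\lambda\gtrsim\lipphif/\sigmacq$. In the matrix setting $\lipphif$ picks up a factor $\ltwo{y_\star}\sigmamax$ from the multiplier estimate on the normal bundle of $\mc{M}$: the dual multiplier at $R_\star$ couples with the Jacobian of $\mc{A}(RR^\top)$, whose extremal singular value is $\sigmamax$. Thus $\Lambdaall=O(\lipf+\ltwo{y_\star}\sigmamax/\sigmacq)$, and substituting into $q=1-M^{-1}$ produces $M=O((\lipf+\opnorm{\mc{A}}\ltwo{y_\star}\sigmamax/\sigmacq)/\alpha_\vphi)$ as stated.
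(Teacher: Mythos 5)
Most of your proposal matches the paper's proof: you apply Theorem~\ref{theorem:linear-convergence} to the composite structure $c(R)=RR^\top$, $h(Y)=f(Y)+\lambda\ltwo{\mc{A}(Y)-b}$, take $\ell=5$ from the geometry theorems, compute $\beta=2$ and $L\le \lipf+\lambda\opnorm{\mc{A}}$, and substitute to get the denominator $209+5000(\lipf+\lambda\opnorm{\mc{A}})/\alpha_\vphi$; your tracing of the minimal $\lambda$ through $\Lambdaqg=2\lipphif/\sigmacq$ with $\lipphif\lesssim \sigmamax\ltwo{y_\star}$ (via first-order optimality $\grad\vphi_f(R_\star)+M_{R_\star}y_\star=0$) and $\Lambdaproj=2\ltwo{y_\star}$ also reproduces the paper's computation of $M$, up to a harmless imprecision in writing $\Lambdaall=O(\lipf+\ltwo{y_\star}\sigmamax/\sigmacq)$ where the paper has $\Lambdaall=4\sigmamax\ltwo{y_\star}/\sigmacq$.

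The genuine gap is item (iii), the proximity property, which is exactly the step the paper singles out as non-generic. Your sketch --- descent plus the sub-differential regularity bound plus ``standard basin-of-attraction bookkeeping that accompanies Theorem~\ref{theorem:linear-convergence}'' --- is circular and does not close: Theorem~\ref{theorem:linear-convergence} takes the proximity property as a hypothesis, so nothing ``accompanies'' it; sub-differential regularity is a local statement valid only for points already known to be near $S$, so it cannot be used to certify that the next iterate stays near $S$; and descent in $\vphi$ alone does not confine iterates, because a small objective gap need not imply small $\dist(R,S)$ without a global (not merely local) link between sublevel sets and distance. The paper supplies precisely this link using structure specific to the matrix problem: the lifted penalty $h(X)=f(X)+\lambda\ltwo{\mc{A}(X)-b}$ is \emph{convex in $X$} with unique minimizer $X_\star$ (for $\lambda\ge\ltwo{y_\star}$, Lemma~\ref{lemma:global-optimality}), so Lemma~\ref{lemma:convex-growth} gives a global statement that $h(X)-h_\star\le\delta(\eps)$ forces $\lfro{X-X_\star}\le\eps$; combined with the descent property this keeps $X^k=R^kR^{k\top}$ within $\eps$ of $X_\star$, and the matrix growth bounds (Lemma~\ref{lemma:overspecified-growth}, or the exact-rank bound) then convert this into $\dist(R^k,S)\le O(\sqrt{\eps})$, yielding the proximity property with $d(\eps)=\delta(\eps)/\lipphif$. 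Without an argument of this kind (or an explicit inductive bootstrap that sums the geometrically decaying step lengths while maintaining the neighborhood hypothesis), your invocation of Theorem~\ref{theorem:linear-convergence} is not justified.
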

The proof as well as discussions on this last linear rate can be
found in Appendix~\ref{appendix:proof-corollary-rate}.
\section{Examples of quadratic growth}
\label{section:examples}
In this section, we provide examples of
problem~\eqref{problem:constrained-sdp} that have low-rank quadratic
growth, i.e. satisfying
Assumption~\ref{assumption:low-rank-quadratic-growth}. By giving
conditions under which these are true, we identify some situations in
which the geometric results given in
Theorems~\ref{theorem:geometry-exact-specified}
or~\ref{theorem:linear-objective-over-specified} will hold.

\subsection{Linear objectives}
As we see in Theorem~\ref{theorem:linear-objective-over-specified},
our sufficient conditions for quadratic growth requires checking CQ,
strict complementarity, and dual non-degeneracy. We illustrate showing
these conditions for the SDP for $\Z_2$ synchronization and SO(d)
synchronization when the data contains strong signal.

\begin{example}[$\Z_2$ synchronization]
  Let $x_\star\in\set{\pm 1}^n$ be an unknown binary vector. The
  $\Z_2$-synchronization problem is to recover $x_\star$ from the
  matrix of noisy observations
  \begin{equation*}
    A = \frac{\lambda}{n} x_\star x_\star^\top + W,
  \end{equation*}
  where $W$ is a Gaussian Orthogonal Ensemble (GOE): $W=W^\top$,
  $W_{ij}\sim\normal(0,1/n)$ for $i<j$, $W_{ii}\sim\normal(0,2/n)$,
  and these entries are independent. This problem is a simplified
  model for community detection problems such as in the stochastic
  block model~\cite{BandeiraBoVo16}.

  The maximum likelihood estimate of the above problem is
  computationally intractable for its need to search over $2^n$
  possibilities. However, the maximum likelihood problem can be
  relaxed into an SDP: letting $C=-A$, we solve
  \begin{equation}
    \label{problem:maxcut}
    \begin{aligned}
      \minimize & ~~ \<C,X\> \\
      \subjectto & ~~ \diag(X) = \ones \\
      & ~~ X \succeq 0.
    \end{aligned}
  \end{equation}
  We are interested in when the relaxation is tight, or, that $x_\star
  x_\star^\top$ is the unique solution
  to~\eqref{problem:maxcut}. Recent work~\cite{BandeiraBoSi17}
  establishes that when
  \begin{equation*}
    \lambda>\sqrt{(2+\eps)\log n},
  \end{equation*}
  with high probability, $x_\star x_\star^\top$ is the unique
  solution to~\eqref{problem:maxcut} and strict complementarity
  holds. If this happpens, dual non-degeneracy also holds: we have
  $X_\star = x_\star x_\star^\top$, $Q_1=x_\star/\sqrt{n}$, and the
  matrices $Q_1^\top A_iQ_1 = [x_\star]_i^2=1$ so they certainly span
  $\symm^1=\R$. 
  
  Finally, we note that CQ holds for \emph{any} MaxCut problem: the
  constraints are $A_i=e_ie_i^\top$ and $b_i=1$ for
  $i\in[n]$. For $X_\star=R_\star R_\star^\top$, constraint
  qualification requires that $\set{e_1e_1^\top
    R_\star,\dots,e_ne_n^\top R_\star}$ are
  linearly independent, or that $R_\star$ have non-zero rows. This has
  to be true, as the rows have norm one.

  Putting together, the assumptions of
  Theorem~\ref{theorem:linear-objective-over-specified} will hold, and
  the factorized problem with rank $r\ge 1$ will have quadratic
  growth, norm convexity, and sub-differential regularity.
\end{example}

\begin{example}[SO(d) synchronization]
  The SO(d) synchronization problem is an multi-dimensional extension
  of the MaxCut problem: we are interested in
  recovering $n$ orthogonal matrices
  $Q_1,\dots,Q_n\in\R^{d\times d}$ 
  given their noisy pairwise compositions
  \begin{equation*}
    A_{ij} = Q_iQ_j^\top + {\rm noise} \in \R^{d\times d}.
  \end{equation*}
  Arranging $A_{ij}$ into $A\in\R^{nd\times nd}$ and forming the
  decision variable $R\in\R^{(nd)\times d}$ with row blocks
  $R_i\in\R^{d\times d}$, we solve (for $C=-A$)
  \begin{align*}
    \minimize & ~~\sum_{i,j} \<C_{ij}, R_iR_j^\top\> = \<C,RR^\top\>
    \\
    \subjectto & ~~ R_iR_i^\top = I_{d\times d}.
  \end{align*}
  The SDP relaxation is
  \begin{equation}
    \label{problem:sod}
    \begin{aligned}
      \minimize & ~~ \<C,X\> \\
      \subjectto & ~~ X_{((i-1)d+1):id,((i-1)d+1):id} = I_{d\times d} \\
      & ~~ X \succeq 0.
    \end{aligned}
  \end{equation}
  By symmetry, there are $k=nd(d+1)/2$ equality constraints specifying
  the diagonal blocks.

  CQ holds for all SDPs of the form~\eqref{problem:sod}. Indeed, let
  $X_\star=R_\star R_\star^\top$ with
  $R_\star\in\R^{nd\times r_\star}$ be the low-rank solution to the
  above SDP and $R_i\in\R^{d\times r_\star}$ be the $i$-the row
  block. Define $R^0_{i,j}$ as the matrix only keeping the $j$-th row
  of $R_i$; $R^0_{i,jk}$ as the matrix swapping the $j$-th and $k$-th
  row of $R_i$ and all the other rows set to zero. Constraint
  qualification requires that for all $i\in[n]$, the matrices
  \begin{equation*}
    \set{R^0_{i,j}:j\in[n]} \cup \set{R^0_{i,jk}:j,k\in[n],j\neq k}
  \end{equation*}
  are linearly independent. A sufficient condition is that for all
  $i$, the rows of $R_i$ are linearly independent. Again, as
  $R_iR_i^\top=I_{d\times d}$, the rows of $R_i$ must be orthonormal
  and so linearly independent.

  When the noise is small enough, we may expect that the signal
  dominates the noise, strict complementarity holds, and
  $\rank(X_\star)=d$ with $R_\star$ having orthogonal row blocks that
  are close to 
  $Q_i$. When this happens, we claim that the dual
  non-degeneracy holds. Indeed, as the row blocks are orthogonal,
  $R_\star/\sqrt{n}$ has orthonormal columns, and so dual
  non-degeneracy requires that the set
  \begin{equation*}
    \set{\frac{1}{n}r_{(i-1)d+j} r_{(i-1)d+j}^\top,~
    \frac{1}{n}\left(r_{(i-1)d+j_1} r_{(i-1)d+j_2}^{\top} 
      + r_{(i-1)d+j_2} r_{(i-1)d+j_1}^{\top}\right)
    :i\in[n],j,j_1\neq j_2\in[d]}
  \end{equation*}
  spans $\symm^d$ ($r_i$ is the $i$-th row of $R_\star$). As
  $\set{r_1,r_2,r_3}$ forms an orthonormal basis of $\R^d$, the subset
  with $i=1$ spans $\symm^d$ already.

  Showing strict complementarity under strong signal requires
  concentration results similar to the $\Z_2$ synchronization case,
  which is not the focus of the present paper.
\end{example}

\subsection{Quadratic objectives}
We briefly illustrate how to show quadratic growth in objectives with
more natural quadratic behavior.

\begin{example}[Low-rank matrix sensing]
  Let $X_\star=R_\star R_\star^\top$ be a rank-$r_\star$ matrix, where
  $R_\star$ has norm-one rows. Let
  $c_i\in\R^n,i\in[N]$ be i.i.d. Gaussian random vectors,
  $C_i=c_ic_i^\top$,
  and suppose we observe
  \begin{equation*}
    d_i = c_i^\top X_\star c_i = \<X_\star, c_ic_i^\top\> = \<C_i,
    X_\star\>.
  \end{equation*}
  The goal is to recover $X_\star$. This is a binary phase
  retrieval problem when $r_\star=1$, and is in general a low-rank
  matrix sensing problem with some additional norm constraints.
  
  We solve
  \begin{align*}
    \minimize & ~~ \frac{1}{2N}\ltwo{\mc{C}(X) - d}^2 =
                \frac{1}{2N}\ltwo{\mc{C}(X-X_\star)}^2 \\
    \subjectto & ~~ \diag(X) = \ones,~~X \succeq 0.
  \end{align*}
  with $A_i=e_ie_i^\top$, $b_i=1$ for $i\in[n]$. Clearly, $X_\star$ is
  a solution (as the objective equals zero) and constraint
  qualification holds at $X_\star$. For any $X$ with $\rank(X) \le r$,
  where $r\ge r_\star$, $X-X_\star$ has rank at most $r_\star+r\le
  2r$, and so 
  \begin{equation*}
    \lnuc{X-X_\star}^2 \le 2r\lfro{X-X_\star}^2.
  \end{equation*}
  Standard matrix concentration results~\citep[Theorem
  10.2]{Wainwright19} show that as long as $N\ge C_0\cdot nr$, with
  high probability
  \begin{equation*}
    \frac{1}{2N}\ltwo{\mc{C}(X-X_\star)}^2 \ge
    \frac{1}{4}\lfro{X-X_\star}^2
  \end{equation*}
  uniformly over all $X$ with $\rank(X)\le r$. This gives quadratic
  growth over rank-$r$ matrices, which in turn implies nice geometries
  by Theorem~\ref{theorem:quadratic-implies-regularity}. Further, it
  is possible to achieve a lower constant $C_0$ than implied in the
  original concentration, as we only need to look at $X$ satisfying
  $X_{ii}=1$, which as a set will have lower metric entropy.
\end{example}
\section{Examples and numerical experiments}
Our results in Section~\ref{section:matrix-growth} are applicable on a
variety of low-rank semidefinite optimization problems, including
\begin{itemize}
\item $\Z_2$ synchronization (MaxCut SDP) and ${\rm SO}(d)$
  synchronization;
\item Low-rank matrix sensing;
\item Random quadratics problem,
\end{itemize}
whose details are deferred to Appendix~\ref{section:examples}
and~\ref{section:experiments}. Numerical experiments on synthetic MaxCut SDPs
and random quadratics problems are presented in
Appendix~\ref{section:experiments}.

\section{Conclusion}
We considered a family of constrained composite optimization problems
and proposed solving them by the prox-linear algorithm on the exact
penalty formulation. We established its local linear convergence assuming
quadratic growth and build a matrix-specific theory for showing such
quadratic growth in various types of semidefinite optimization
problems. For future work, it would be of interest to generalize such
convergence theory to composite problems beyond exact penalty
functions. It would be also valuable to implement our algorithm as a
scalable low-rank SDP solver.

\bibliographystyle{abbrvnat}
\bibliography{main}

\appendix
\section{Additional notation}
In Euclidean spaces, $\proj_A$ denote the orthogonal projection onto a
set $A$. 
For a smooth manifold $\mc{M}\subset \R^n$, embedded in $\R^n$, let
$\mc{T}_x\mc{M}$ be its tangent space at $x$.

For function $f$ on a smooth manifold $\mc{M}\subset \R^n$, let
$\rgrad f$ and $\rhess f$ denote its Riemannian gradient and Hessian
operators, whose Euclidean representations are defined as
(cf.~\cite{AbsilMaSe09})
\begin{equation}
  \label{equation:riemannian-grad-hess}
  \begin{aligned}
    & \rgrad f(x) = \proj_{\mc{T}_x\mc{M}}\grad f(x)~~~{\rm and}
    ~~~\rhess f(x)[u] = \proj_{\mc{T}_x\mc{M}}(D\rgrad
    f(x)[u]),~\forall u\in\mc{T}_x\mc{M}.
  \end{aligned}
\end{equation}

\section{Technical lemmas}
\subsection{Geometries of Riemannian manifolds}
We frequently use the orthogonal projection onto a Riemannian manifold
$\mc{M}$ in our proofs. For any closed set $M\in\R^n$, the orthogonal
projection onto $M$ is the set
\begin{equation*}
  \proj_M(x) \defeq \argmin_{y\in M} \ltwo{x-y}.
\end{equation*}
When $M$ is convex, the projection $\proj_M$ exists and is unique for
all $x\in\R^n$. For
smooth manifolds, which are non-convex in general, projections are
still well-defined locally. This is stated in the following lemma (cf.
\begin{lemma}[Lemma 4,~\cite{LewisMa08}]
  \label{lemma:projection-exists}
  Let $\mc{M}\subset\R^n$ be a smooth manifold and
  $x\in\mc{M}$, then there exists a neighborhood $\mc{X}$
  of $x$ such that $\proj_{\mc{M}}$ exists and is unique in
  $\mc{X}$. 
\end{lemma}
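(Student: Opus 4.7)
The plan is to prove this via the classical tubular neighborhood construction, using the inverse function theorem applied to the normal exponential map. Since $\mc{M}\subset\R^n$ is a smooth embedded submanifold, near $x$ I can use the orthogonal decomposition $\R^n = T_y\mc{M} \oplus N_y\mc{M}$ for each $y\in\mc{M}$ close to $x$, where $N_y\mc{M}$ is the normal space. The normal bundle $N\mc{M} = \{(y,v): y\in\mc{M}\text{ near }x,\ v\in N_y\mc{M}\}$ is itself a smooth $n$-dimensional manifold, and I consider the smooth map $\Phi: N\mc{M} \to \R^n$ given by $\Phi(y,v) = y+v$.

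Next I would compute the differential of $\Phi$ at $(x,0)$. Under the natural identification $T_{(x,0)}N\mc{M} \cong T_x\mc{M} \oplus N_x\mc{M}$, the differential $D\Phi(x,0)$ is exactly the identity map $T_x\mc{M} \oplus N_x\mc{M} \to \R^n$, which is invertible by the orthogonal direct sum decomposition. By the inverse function theorem, $\Phi$ is a diffeomorphism from some neighborhood of $(x,0)$ in $N\mc{M}$ onto an open neighborhood $\mc{X}_0$ of $x$ in $\R^n$. In particular, every $z\in\mc{X}_0$ admits a unique representation $z = y(z) + v(z)$ with $y(z)\in\mc{M}$ near $x$ and $v(z)\in N_{y(z)}\mc{M}$.

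It remains to show that $y(z)$ is the \emph{global} projection of $z$ onto $\mc{M}$, i.e.~$y(z) = \argmin_{y\in\mc{M}}\ltwo{z-y}$, after shrinking $\mc{X}_0$ to a smaller neighborhood $\mc{X}$. For this I would argue: any minimizer $y^\star\in\mc{M}$ of $\ltwo{z-\cdot}$ satisfies $\ltwo{z-y^\star}\le \ltwo{z-x}$ (since $x\in\mc{M}$ is a competitor), hence $\ltwo{y^\star - x}\le 2\ltwo{z-x}$. So if $\mc{X}$ is a sufficiently small ball around $x$, any global minimizer $y^\star$ must lie in a small neighborhood of $x$ where the local uniqueness from the inverse function theorem applies. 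Moreover, $y^\star$ must satisfy the first-order optimality condition $z - y^\star \in N_{y^\star}\mc{M}$, so $(y^\star, z-y^\star)$ is a preimage of $z$ under $\Phi$; by injectivity of $\Phi$ on its chosen neighborhood, $y^\star = y(z)$.

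The main obstacle is the last step, bridging the local well-definedness of $y(z)$ (coming from the inverse function theorem, which is purely a statement about $\Phi$ being a local diffeomorphism) with the \emph{global} minimization problem defining $\proj_{\mc{M}}$. The trick above, combined with the normal first-order condition, is the standard way to close this gap; care must be taken to choose $\mc{X}$ small enough that the bound $\ltwo{y^\star - x}\le 2\ltwo{z-x}$ forces $y^\star$ into the domain where $\Phi$ is a diffeomorphism, and that $\mc{M}$ has no other branches returning close to $x$ (which is automatic for smooth manifolds, as they locally look like affine subspaces).
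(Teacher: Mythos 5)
The paper does not actually prove this lemma---it is imported verbatim as Lemma 4 of Lewis and Malick, so there is no internal proof to compare against. Your tubular-neighborhood argument is the standard self-contained route to this fact, and its core is sound: the map $\Phi(y,v)=y+v$ on the normal bundle has invertible differential at $(x,0)$, the inverse function theorem gives a unique local decomposition $z=y(z)+v(z)$ with $v(z)\in N_{y(z)}\mc{M}$, and any nearest point $y^\star$ to $z$ satisfies the stationarity condition $z-y^\star\in N_{y^\star}\mc{M}$ while lying within $2\ltwo{z-x}$ of $x$, so injectivity of $\Phi$ forces $y^\star=y(z)$. (For the differential to make sense you need $\mc{M}$ at least $C^2$, since $\Phi$ involves first derivatives of the embedding; with ``smooth'' meaning $C^\infty$ this is harmless.)

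The one genuine loose end is the \emph{existence} half of the conclusion. Your argument shows that \emph{if} a nearest point exists then it is unique and equals $y(z)$, but $\dist(z,\cdot)$ need not attain its infimum on an arbitrary (non-closed) embedded submanifold, and you never verify that $y(z)$ itself is a global minimizer rather than merely the unique stationary candidate. The fix is the same fact you gesture at in your final parenthetical: an embedded submanifold is locally closed, i.e.\ by the slice-chart theorem there is a neighborhood $U$ of $x$ with $\mc{M}\cap U$ closed in $U$. Then for $z$ close enough to $x$, every minimizing sequence for $\inf_{y\in\mc{M}}\ltwo{z-y}$ is eventually confined to the compact set $\mc{M}\cap\bar{\ball}(x,\delta)\subset U$, so a minimizer exists and your uniqueness argument applies to it; equivalently, one can check directly that $y(z)$ beats every $y\in\mc{M}$ outside $\ball(x,\delta)$ trivially and every $y\in\mc{M}\cap\bar{\ball}(x,\delta)$ by compactness plus stationarity. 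Adding this one observation closes the gap; the rest of your proof is correct and is essentially the argument underlying the Lewis--Malick result the paper cites.
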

While Lemma~\ref{lemma:projection-exists}
only gives guarantees around a single
point $x\in\mc{M}$, we note that it applies to any compact
subset $S\subset\mc{M}$. Indeed, for each $x\in S$, there exists a
neighborhood $\ball(x,\eps_x)$ such that the projection uniquely
exists. Now, as the union of $\ball(x,\eps_x)$ covers
$S$ (take the open balls), by compactness, there exists a finite
sub-cover, i.e. $\set{x_1,\dots,x_k}\in S$ such that
$S\subset \bigcup_i \ball(x_i,\eps_{x_i})$. Taking
$\eps_0=\min_{i\in[k]}\eps_{x_i}$, the neighborhood $\mc{N}(S,\eps_0)$
is a desired neighborhood of $S$ in which the projection uniquely
exists.

For any $y$ with a well-defined projection, we have the orthogonality
condition
\begin{equation*}
  \<y - \proj_{\mc{M}}(y), v\> = 0~~{\rm
    for~all}~~v\in\mc{T}_{\proj_{\mc{M}}(y)}\mc{M},
\end{equation*}
or simply $y-\proj_{\mc{M}}(y)\in\mc{N}_{\proj_{\mc{M}(y)}}$. Hence,
$\proj_{\mc{M}}(y)$ is also the projection of $y$ onto the tangent
space $\proj_{\mc{M}}(y)+\mc{T}_{\proj_{\mc{M}}(y)}\mc{M}$. Building
on this, we show an approximate Pythagorean identity for repeated
projections onto smooth manifolds. 
\begin{lemma}
  \label{lemma:projection-bound}
  Let $\mc{M}\subset\R^n$ be a smooth manifold and $S\subset\mc{M}$ be
  a compact smooth submanifold of $\mc{M}$. For
  any $\delta\in(0,1)$, there exists a neighborhood $\mc{N}(S,\eps)$
  such that for all $x\in\mc{N}(S,\eps)$, we have
  \begin{equation*}
    \ltwo{x-\proj_{\mc{M}}(x)}^2 +
    \ltwo{\proj_{\mc{M}}(x)-\proj_{S}\proj_{\mc{M}}(x)}^2 \ge
    (1-\delta)\ltwo{x-\proj_{S}\proj_{\mc{M}}(x)}^2.
  \end{equation*}
  Consequently,
  \begin{equation*}
    \ltwo{x-\proj_{\mc{M}}(x)}^2 + \dist(\proj_{\mc{M}}(x), S)^2 \ge
    (1-\delta)\dist(x,S)^2.
  \end{equation*}
\end{lemma}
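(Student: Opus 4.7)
The plan is to derive an approximate Pythagorean identity. Set $y \defeq \proj_{\mc{M}}(x)$ and $s \defeq \proj_S(y) = \proj_S\proj_{\mc{M}}(x)$. By Lemma~\ref{lemma:projection-exists} applied to both $\mc{M}$ and to the compact submanifold $S$ (via the compactness extension discussed immediately after that lemma), both projections exist and are unique for $x$ in a small enough neighborhood of $S$. The elementary identity
\begin{equation*}
  \ltwo{x-s}^2 \;=\; \ltwo{x-y}^2 + \ltwo{y-s}^2 + 2\<x-y,\,y-s\>
\end{equation*}
reduces the first claim to showing that the cross term $\<x-y,\,y-s\>$ is small.

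The key geometric ingredients are: (i) first-order optimality of the projection gives $x - y \in \mc{N}_y\mc{M}$, the normal space to $\mc{M}$ at $y$; (ii) since $y,s$ both lie on the smooth manifold $\mc{M}$ and are close, decomposing the chord $y - s$ at $s$, the component in $\mc{N}_s\mc{M}$ has magnitude $O(\ltwo{y-s}^2)$ by a second-order Taylor expansion of a local parametrization (the coefficient being the second fundamental form); (iii) the orthogonal projectors onto $\mc{N}_y\mc{M}$ and $\mc{N}_s\mc{M}$ differ in operator norm by $O(\ltwo{y-s})$, since the Gauss map of a smooth manifold is Lipschitz on a neighborhood of the compact set $S$. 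Combining these, the $\mc{T}_s\mc{M}$-component of $y-s$ is nearly orthogonal to $x-y\in\mc{N}_y\mc{M}$ up to an $O(\ltwo{y-s})$ angle error, contributing $O(\ltwo{x-y}\ltwo{y-s}^2)$ to the inner product, and its $\mc{N}_s\mc{M}$-component contributes at most $\ltwo{x-y}\cdot O(\ltwo{y-s}^2)$. The outcome is
\begin{equation*}
  |\<x-y,\,y-s\>| \;\le\; C\,\ltwo{x-y}\,\ltwo{y-s}^2,
\end{equation*}
where $C$ depends only on uniform bounds of the second fundamental form and of the Gauss-map Lipschitz constant over a neighborhood of the compact set $S$, both finite by compactness of $S$ and smoothness of $\mc{M}$.

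For $x \in \mc{N}(S,\eps)$, we have $\ltwo{x-y} \le \dist(x,S) \le \eps$ (any nearest point in $S$ lies in $\mc{M}$) and hence $\ltwo{y-s} = \dist(y,S) \le 2\eps$. The cross-term bound together with AM-GM yields $|\<x-y,\,y-s\>| \le C\ltwo{y-s}\cdot\ltwo{x-y}\ltwo{y-s} \le C\eps(\ltwo{x-y}^2 + \ltwo{y-s}^2)$. Substituting into the identity gives $\ltwo{x-s}^2 \le (1+2C\eps)(\ltwo{x-y}^2+\ltwo{y-s}^2)$, and choosing $\eps$ small enough that $1+2C\eps \le 1/(1-\delta)$ proves the first displayed inequality. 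The consequence then follows immediately: $s\in S$ gives $\ltwo{x-s}\ge \dist(x,S)$, while $\ltwo{y-s}=\dist(y,S)$ by the definition of $s$.

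The main obstacle is the rigorous derivation of the cross-term bound, which is a quantitative statement that $\mc{M}$ osculates its tangent spaces to second order, uniformly on a tubular neighborhood of $S$. The clean implementation is to work in such a tubular neighborhood and use a smoothly varying family of local parametrizations to extract uniform constants; the second fundamental form provides the natural bookkeeping of the higher-order terms, and compactness of $S$ collapses them into the single constant $C$ used above.
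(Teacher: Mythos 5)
Your argument is correct, and at its core it exploits the same geometric fact as the paper's proof: $x-\proj_{\mc{M}}(x)$ lies in the normal space of $\mc{M}$ at $y\defeq\proj_{\mc{M}}(x)$, while the chord from $y$ to $s\defeq\proj_S(y)$ is tangent to $\mc{M}$ up to second order, so the triangle $x,y,s$ is approximately right-angled at $y$. The implementations differ, though. The paper never touches the cross term: it projects $s$ onto the affine tangent plane $y+\mc{T}_y\mc{M}$, obtaining a point $z'$ for which the Pythagorean identity $\ltwo{x-z'}^2=\ltwo{x-y}^2+\ltwo{y-z'}^2$ is exact, and then only needs the single osculation estimate $\ltwo{s-z'}=o(\ltwo{s-y})$ at $y$ to pass from $\ltwo{x-z'}$ back to $\ltwo{x-s}$; everything happens at the one point $y$. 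You instead expand $\ltwo{x-s}^2$ directly and bound $\<x-y,\,y-s\>$ by decomposing $y-s$ at $s$, which forces you to import a second ingredient the paper avoids, namely the comparison of normal spaces at $y$ and at $s$ (Lipschitz Gauss map) in addition to the second-fundamental-form bound; both ingredients are available with uniform constants near the compact $S$ for a smooth $\mc{M}$, so this is sound. What your route buys is a quantitative multiplicative error $(1+2C\eps)$ with an explicit relation $\eps\lesssim\delta/C$, versus the paper's qualitative $o(\cdot)$ estimate; what it costs is the extra normal-space comparison and the somewhat heavier bookkeeping you acknowledge in your last paragraph. One small point you elide, and which the paper handles explicitly via a finite subcover of $S$, is that the neighborhood on which the projections $\proj_{\mc{M}}$ and $\proj_S$ are well defined and on which your constant $C$ is uniform must be chosen uniformly over $S$; your appeal to a tubular neighborhood of the compact $S$ covers this, but it deserves a sentence rather than being folded into "by compactness."
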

\begin{proof}
  Let $\delta\in(0,1)$ be arbitrary.
  Let $\wt{x}_\star\in S$ and consider a point
  $x\in\ball(\wt{x}_\star,\eps)$, with $\eps$ to be determined. Let
  $x_\star=\proj_S(x)$, i.e.
  \begin{equation*}
    \ltwo{x-x_\star} = \inf_{x'\in S} \ltwo{x-x'} = \dist(x,S) \le \eps.
  \end{equation*}
  Let $y=\proj_{\mc{M}}(x)$ and $z=\proj_{S}(y)$. By
  Lemma~\ref{lemma:projection-exists}, when $\eps$ is small
  enough, $y$ will be well-defined and satisfies
  \begin{equation*}
    \ltwo{y-\wt{x}_\star} \le \ltwo{y-x} + \ltwo{x-\wt{x}_\star} \le
    2\ltwo{x-\wt{x}_\star} \le 2\eps.
  \end{equation*}
  So for $\eps$ sufficently small, $y$ is also in a small neighborhood
  of $\wt{x}_\star$ and thus $z$, the projection of $y$ onto $S$, 
  is also well-defined and
  \begin{equation*}
    \ltwo{z-\wt{x}_\star} \le \ltwo{z-y} + \ltwo{y-\wt{x}_\star} \le
    2\ltwo{y-\wt{x}_\star} \le 4\ltwo{x-\wt{x}_\star} \le 4\eps.
  \end{equation*}

  Now, consider the tangent space of $\mc{M}$ at $y$, and let
  $z'=y+\proj_{\mc{T}_y\mc{M}}(z-y)$ be the projection of $z$ onto the
  affine tangent space at $y$. By the Pythagorean identity for
  projection onto linear subspace, we have
  \begin{equation*}
    \ltwo{x-z'}^2 = \ltwo{x-y}^2 + \ltwo{y-z'}^2 \le \ltwo{x-y}^2 +
    \ltwo{y-z}^2.
  \end{equation*}
  It remains to connect $\ltwo{x-z'}$ and $\ltwo{x-z}$.
  As $y$ is close to $\wt{x}_\star$ and
  the manifold is smooth, the tangent space $y+\mc{T}_y\mc{M}$ is an
  accurate approximation of $\mc{M}$ at $y$, in the sense that
  (cf.~\cite{AbsilMaSe09})
  \begin{equation*}
    \ltwo{z-z'} = o(\ltwo{z-y})~~~{\rm as}~\eps\to 0.
  \end{equation*}
  Choosing $\eps$ sufficiently small, we have
  \begin{equation*}
    \ltwo{z-z'} \le \frac{\delta}{2}\ltwo{z-y} \le
    \frac{\delta}{2}\ltwo{x_\star-y} \le 
    \frac{\delta}{2}(\ltwo{x_\star-x}+\ltwo{x-y}) \le
    \delta\ltwo{x-x_\star} \le \delta\ltwo{x-z},
  \end{equation*}
  which implies that
  \begin{equation*}
    \ltwo{x-z'} \ge \ltwo{x-z} - \ltwo{z'-z} \ge
    (1-\delta)\ltwo{x-z}.
  \end{equation*}
  Putting together, we get
  \begin{equation*}
    (1-\delta)^2\ltwo{x-z}^2 \le \ltwo{x-z'}^2 \le \ltwo{x-y}^2 +
    \ltwo{y-z}^2.
  \end{equation*}
  This is the desired bound, and we have shown that it holds for all
  $x\in\ball(\wt{x}_\star,\eps)$ for sufficiently small $\eps$. Now,
  for each $\wt{x}_\star\in\mc{S}$ we can establish such a
  neighborhood, and using the finite sub-cover property, we can find a
  finite set of minima each associated with a neighborhood. Choosing
  the minimum $\eps$ of these finitely many neighborhood sizes, the
  desired property holds for all $x\in\mc{N}(S,\eps)$.
  

\end{proof}

\subsection{Sub-differential and growth of constraints}
The following lemma is useful in proving our main geometric result. It
looks at the behavior of the penalty term $\ltwo{Ac(x)-b}$.
\begin{lemma}
  \label{lemma:norm-bound}
  Let Assumption~\ref{assumption:smoothness}
  and~\ref{assumption:constraint-non-degeneracy} hold, then
  for any $\delta\in(0,1)$ there exists a neighborhood
  $\mc{N}(S,\eps)$
  with $\eps\le\delta\sigmacq/((1+\delta)\opnorm{A}\smoothc)$
  such that the following holds: for any $x\in\mc{N}(S,\eps)$ with
  $Ac(x)-b\neq 0$, 
  \begin{enumerate}[(1)]
  \item Let $y=\proj_{\mc{M}}(x)$ be the projection of $x$ onto
    $\mc{M}$, then
    \begin{equation*}
      \ltwo{Ac(x)-b} \ge (1-\delta)\sigmacq\ltwo{x-y}.
    \end{equation*}
  \item We have
    \begin{equation*}
      \ltwo{\grad\vphi_g(x)} \cdot \dist(x,S) \ge
      (1-\delta)\ltwo{Ac(x)-b} = (1-\delta)\vphi_g(x).
    \end{equation*}
  \end{enumerate}
\end{lemma}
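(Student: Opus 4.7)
The common geometric ingredient for both parts is the projection $y := \proj_{\mc{M}}(x)$, which by Lemma~\ref{lemma:projection-exists} is well-defined whenever $\eps$ is small enough; it satisfies $Ac(y)=b$, and $x-y$ lies in the normal space $\mc{N}_y\mc{M}=\range(\grad c(y)A^\top)$, because $\mc{T}_y\mc{M}=\ker(A\grad c(y)^\top)$. The common analytic ingredient is the smoothness of $c$ from Assumption~\ref{assumption:smoothness}, which yields the linearization estimate $\ltwo{c(x)-c(y)-\grad c(y)^\top(x-y)}\le\tfrac{\smoothc}{2}\ltwo{x-y}^2$.

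For Part (1), I would write $Ac(x)-b = A(c(x)-c(y))$ and apply the linearization estimate to obtain
\begin{equation*}
\ltwo{Ac(x)-b}\ \ge\ \ltwo{A\grad c(y)^\top(x-y)}\ -\ \tfrac{\opnorm{A}\smoothc}{2}\ltwo{x-y}^2.
\end{equation*}
Since $x-y = \grad c(y)A^\top v$ for some $v\in\R^k$, writing $B:=A\grad c(y)^\top$ and expanding $\ltwo{B(x-y)}^2 = v^\top B B^\top B B^\top v$ and $\ltwo{x-y}^2 = v^\top B B^\top v$ via SVD of $B$ gives the bound $\ltwo{B(x-y)}\ge\sigma_{\min}(B)\ltwo{x-y}\ge\sigmacq\ltwo{x-y}$ (this is where Assumption~\ref{assumption:constraint-non-degeneracy} enters quantitatively). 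Combining these two inequalities yields $\ltwo{Ac(x)-b}\ge\sigmacq\ltwo{x-y}\bigl(1-\tfrac{\opnorm{A}\smoothc\ltwo{x-y}}{2\sigmacq}\bigr)$, and the stated bound on $\eps$ (together with $\ltwo{x-y}\le\dist(x,S)\le\eps$) absorbs the parenthesized factor into $(1-\delta)$.

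For Part (2), I would compute $\grad\vphi_g(x) = \grad c(x)A^\top u$, where $u := (Ac(x)-b)/\ltwo{Ac(x)-b}$ is a unit vector, and pair this gradient against $x-y$. Using $\grad c(x)^\top(x-y) = c(x)-c(y) + R$ with $\ltwo{R}\le\tfrac{\smoothc}{2}\ltwo{x-y}^2$, one computes
\begin{equation*}
\<\grad\vphi_g(x),\,x-y\>\ =\ \<u,\,A(c(x)-c(y))\>\ +\ \<u,AR\>\ =\ \vphi_g(x)\ +\ \mathrm{err},
\end{equation*}
with $|\mathrm{err}|\le\tfrac{\opnorm{A}\smoothc}{2}\ltwo{x-y}^2$. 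Cauchy--Schwarz gives $\ltwo{\grad\vphi_g(x)}\ltwo{x-y}\ge\vphi_g(x)-|\mathrm{err}|$, and since $\dist(x,S)\ge\dist(x,\mc{M})=\ltwo{x-y}$, it remains to show $|\mathrm{err}|\le\delta\vphi_g(x)$. Using Part (1) to bound $\ltwo{x-y}\le\vphi_g(x)/((1-\delta)\sigmacq)$, one of the factors $\ltwo{x-y}$ in $|\mathrm{err}|$ converts to a multiple of $\vphi_g(x)$, and the remaining $\opnorm{A}\smoothc\ltwo{x-y}$ is controlled by the assumed bound on $\eps$.

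The main obstacle is the joint constant-chasing in the two parts: the single choice $\eps\le\delta\sigmacq/((1+\delta)\opnorm{A}\smoothc)$ must simultaneously (i) shrink the quadratic remainder in Part (1) to at most $\delta\sigmacq\ltwo{x-y}$, and (ii) after the substitution from Part (1), bound the Part (2) remainder by $\delta\vphi_g(x)$. Everything else -- the normal-space characterization, the SVD-based lower bound $\sigma_{\min}(B)\ge\sigmacq$, and the Taylor-type control of $c$ -- is standard; the content of the lemma is packaging these into the clean form needed downstream for proving norm convexity in Theorem~\ref{theorem:quadratic-implies-regularity}.
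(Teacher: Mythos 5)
Your proposal is correct and follows essentially the same route as the paper: project onto $\mc{M}$, use that $x-y$ lies in the normal space together with the constraint-qualification bound $\sigma_{\min}(A\grad c(y)^\top)\ge\sigmacq$ and a Taylor remainder for Part (1), then lower-bound $\ltwo{\grad\vphi_g(x)}\ltwo{x-y}$ by the inner product $\<Ac(x)-b, A\grad c(x)^\top(x-y)\>$ and absorb the quadratic error via Part (1) for Part (2). The only (immaterial) difference is in the constant bookkeeping: the paper controls the Part (2) remainder against the intermediate bound $\sigmacq\ltwo{x-y}-\opnorm{A}\smoothc\ltwo{x-y}^2$, which yields exactly the threshold $\eps\le\delta\sigmacq/((1+\delta)\opnorm{A}\smoothc)$, whereas your use of the already-discounted $(1-\delta)\sigmacq\ltwo{x-y}$ may force a slightly smaller $\eps$ for $\delta$ near $1$ --- harmless, since the lemma only asserts existence of some such neighborhood.
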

\begin{proof}
  \begin{enumerate}[(1)]
  \item 
    As $\mc{M}$ is smooth, by Lemma~\ref{lemma:projection-exists}, the
    projection uniquely exists for all $x\in\mc{N}(S,\eps)$ for some
    small $\eps>0$. Let $y=\proj_{\mc{M}}(x)$, so the triangle
    inequality implies $\dist(y,S)\le \dist(x,y)+\dist(x,S)\le
    2\dist(x,S)$. 

    We can now perform a Taylor expansion and get
    \begin{equation*}
      \ltwo{Ac(x)-b} = \ltwo{Ac(x)-Ac(y)} = \ltwo{A(\grad
        c(y)^\top(x-y)+r)} \ge \ltwo{A\grad c(y)^\top(x-y)} -
      \ltwo{Ar}, 
    \end{equation*}
    where by smoothness $\ltwo{r}\le\smoothc\ltwo{x-y}^2$. Now, as $y$
    is the projection of $x$ onto $\mc{M}$, $x-y$ must be orthogonal to
    the tangent space $\mc{T}_y\mc{M}$, so we have
    $x-y\in\mc{N}_y\mc{M}={\rm span}\set{\grad
      c(y)a_1,\dots, \grad c(y)a_k}$. By
    Assumption~\ref{assumption:constraint-non-degeneracy}, whenever 
    $\dist(x,S)\le \eps_0/2$, we have $\dist(y,S)\le\eps_0$, so $y$ is
    in the neighborhood of constraint qualification and thus
    \begin{equation*}
      \ltwo{A\grad c(y)^\top(x-y)} \ge \sigma_{\min}(A\grad
      c(y)^\top)\ltwo{x-y} \ge \sigmacq \ltwo{x-y}.
    \end{equation*}
    This gives us
    \begin{equation*}
      \ltwo{Ac(x)-b} \ge \sigmacq\ltwo{x-y} -
      \opnorm{A}\smoothc\ltwo{x-y}^2 \ge
      (1-\delta)\sigmacq\ltwo{x-y}
    \end{equation*}
    as long as $\ltwo{x-y}\le\delta\sigmacq/(\opnorm{A}\smoothc)$,
    which is satisfied if
    $\dist(x,S)\le\delta\sigmacq/(\opnorm{A}\smoothc)$. 
  \item Observing that $\dist(x,S)\ge\ltwo{x-y}$, it suffices to
    show the result with $\dist(x,S)$ replaced by $\ltwo{x-y}$. Recall
    that for $x\notin\mc{M}$,
    \begin{equation*}
      \grad\vphi_g(x) = \frac{\grad c(x) A^\top(Ac(x)-b)}{\ltwo{Ac(x)-b}},
    \end{equation*}
    so it suffices to show that
    \begin{equation*}
      \ltwo{\grad c(x) A^\top(Ac(x)-b)} \cdot \ltwo{x-y} \ge
      (1-\delta)\ltwo{Ac(x)-b}^2.
    \end{equation*}
    We have
    \begin{align*}
      & \quad \ltwo{\grad c(x) A^\top(Ac(x)-b)} \cdot \ltwo{x-y}
      \\
      & \ge \<\grad c(x) A^\top(Ac(x)-b), x-y\> = \<Ac(x)-b,
        A\grad c(x)^\top(x-y)\> \\
      & = \ltwo{Ac(x)-b}^2 + \<Ac(x)-b,r\>,
    \end{align*}
    where $r = Ac(y)-Ac(x) - A\grad c(x)^\top(y-x)$ satisfies
    $\ltwo{r}\le \opnorm{A}\smoothc\ltwo{x-y}^2$. Similar to (1), the bound
    \begin{equation}
      \label{equation:constraint-higher-order}
      \ltwo{r} \le \opnorm{A}\smoothc\ltwo{x-y}^2 \le \delta\ltwo{Ac(x)-b}
    \end{equation}
    holds when
    \begin{equation*}
      \delta(\sigmacq\ltwo{x-y} - \opnorm{A}\smoothc\ltwo{x-y}^2) \ge
      \opnorm{A}\smoothc\ltwo{x-y}^2,
    \end{equation*}
    or that
    \begin{equation*}
      \ltwo{x-y} \le \frac{\delta\gamma}{(1+\delta)\opnorm{A}\smoothc}.
    \end{equation*}
    Substituting~\eqref{equation:constraint-higher-order} into
    the main inequality gives
    \begin{equation*}
      \<\grad c(x) A^\top(Ac(x)-b), x-y\> \ge \ltwo{Ac(x)-b}
      \left( \ltwo{Ac(x)-b} - \ltwo{r} \right) \ge (1-\delta)\ltwo{Ac(x)-b}^2,
    \end{equation*}
    the desired bound. We note that both (1) and (2) hold if
    $\dist(x,S)\le\delta\sigmacq/[(1+\delta)\opnorm{A}\smoothc]$. 
  \end{enumerate}
\end{proof}

\subsection{Global growth of $h(X)$}
\begin{lemma}
  \label{lemma:convex-growth}
  Let Assumption~\ref{assumption:quadratic-growth} hold and suppose
  $\lambda>\ltwo{y_\star}$. For any $\eps>0$, there exists some
  $\delta(\eps)>0$ such that if $X\succeq 0$ and
  $h(X)-h_\star\le\delta(\eps)$, then
  $\lfro{X-X_\star}\le\eps$. Further, $\delta(\eps)\to 0$ as $\eps\to
  0$. 
\end{lemma}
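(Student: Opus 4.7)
The plan is to combine an exact-penalty lower bound with a convexity-plus-compactness argument. From the KKT conditions $\grad f(X_\star) = \mc{A}^*(y_\star) + Z_\star$ and $\<Z_\star, X_\star\>=0$ (Assumption~\ref{assumption:dual-existence}), convexity of $f$ yields, for every $X\succeq 0$,
\begin{equation*}
  f(X) \ge f_\star + \<\grad f(X_\star), X - X_\star\> = f_\star + \<y_\star, \mc{A}(X)-b\> + \<Z_\star, X\>.
\end{equation*}
Combined with $\lambda > \ltwo{y_\star}$, this gives the key estimate
\begin{equation*}
  h(X) - h_\star \;\ge\; (\lambda - \ltwo{y_\star})\ltwo{\mc{A}(X)-b} + \<Z_\star, X\> \;\ge\; 0,
\end{equation*}
which simultaneously shows $X_\star$ is a global minimizer of $h$ on $\{X\succeq 0\}$ and converts a bound $h(X)-h_\star\le\delta$ into the two controls $\ltwo{\mc{A}(X)-b}\le \delta/(\lambda-\ltwo{y_\star})$ and $\<Z_\star, X\>\le\delta$.

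I would then argue the main claim by contradiction: if it fails, there exist $\eps>0$ and a sequence $X_n\succeq 0$ with $h(X_n)\to h_\star$ and $\lfro{X_n-X_\star}\ge\eps$. The two estimates above force $\ltwo{\mc{A}(X_n)-b}\to 0$ and $\<Z_\star, X_n\>\to 0$. After extracting a bounded subsequence, pass to a limit $X'\succeq 0$; continuity yields $\mc{A}(X')=b$, $\<Z_\star, X'\>=0$, and $h(X')=h_\star$, so $f(X')=f_\star$ and $X'$ is a feasible global minimizer of $f$ with $\lfro{X'-X_\star}\ge\eps$. To contradict this, use convexity along the segment $X_t=(1-t)X_\star+tX'$, $t\in[0,1]$, which is PSD and feasible by linearity of $\mc{A}$; convexity of $f$ gives $f(X_t)\le f_\star$, hence equality. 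For $t$ small enough, $X_t\in\mc{N}(X_\star,\eps_0)$, and Assumption~\ref{assumption:quadratic-growth} forces $X_t=X_\star$, whence $X'=X_\star$ — a contradiction. The second statement follows by defining $\delta(\eps)$ as the supremum of admissible tolerances, which is non-decreasing in $\eps$; the main implication gives $\delta(\eps)\to 0$ as $\eps\to 0$.

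The main obstacle is the boundedness step. The bound $\<Z_\star, X_n\>\le\delta$ only controls components of $X_n$ in directions where $Z_\star$ is positive, and on the range of $X_\star$ (which lies in $\ker Z_\star$) one must leverage $\ltwo{\mc{A}(X_n)-b}\to 0$ together with the constraint structure. A clean way is to normalize $Y_n=X_n/\lfro{X_n}$ on the assumption that $\lfro{X_n}\to\infty$, extract a recession direction $Y_\infty\succeq 0$ with $\lfro{Y_\infty}=1$, $\mc{A}(Y_\infty)=0$, and $\<Z_\star, Y_\infty\>=0$, and rule this out via a strict-complementarity-plus-dual-nondegeneracy argument on $\mc{A}$ restricted to the range of $X_\star$; alternatively, one can appeal directly to coercivity of $h$ on $\{X\succeq 0\}$ under the assumptions already in force.
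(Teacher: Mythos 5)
Your opening estimate is fine and matches the paper's global-optimality argument: convexity plus the KKT pair $(y_\star,Z_\star)$ gives $h(X)-h_\star \ge (\lambda-\ltwo{y_\star})\ltwo{\mc{A}(X)-b} + \<Z_\star,X\>$, so $X_\star$ is the unique global minimizer of $h$ on the PSD cone. The genuine gap is exactly the step you flag yourself: your compactness argument needs the sublevel sets $\set{X\succeq 0: h(X)-h_\star\le\delta}$ to be bounded, and as written you do not establish this. The two repairs you sketch do not close it: the ``strict-complementarity-plus-dual-nondegeneracy'' route invokes Assumption~\ref{assumption:dn}, which is not among the hypotheses of this lemma (it is only assumed in the rank over-specified linear case), and the alternative appeal to ``coercivity of $h$ on $\set{X\succeq 0}$ under the assumptions already in force'' is asserted, not proved. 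The assertion is in fact true, but for a reason you never give: $h+\mathbb{I}[X\succeq 0]$ is a closed proper convex function whose minimizing set is the single point $X_\star$, and a closed convex function with a nonempty bounded argmin has all sublevel sets bounded (all sublevel sets share the same recession cone). Supplying that fact would rescue your sequential argument; without it, the proof does not go through. A secondary issue: at the end you invoke Assumption~\ref{assumption:quadratic-growth} on the segment $X_t$, but that assumption concerns $\vphi_f$ on the feasible set near $S$ (and, in the matrix instantiation, only low-rank feasible points), whereas $X_t$ is an arbitrary feasible PSD matrix; the clean way to finish is simply to cite uniqueness of $X_\star$ as the global minimizer of $h$, which you already have from your key estimate.

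It is worth noting that the paper avoids the whole compactness-of-sublevel-sets question. It defines $\delta(\eps)$ as the minimum of $h-h_\star$ over the compact sphere $\set{X\succeq 0:\lfro{X-X_\star}=\eps}$, which is positive by uniqueness of the global minimum, and then handles any far-away $X$ with small gap by moving along the segment to the point $X_\eps = tX+(1-t)X_\star$ with $t=\eps/\lfro{X-X_\star}$: convexity gives $h(X_\eps)-h_\star \le t\,(h(X)-h_\star) < \delta(\eps)$, a contradiction. This radial rescaling makes unboundedness of the sublevel set irrelevant, so no recession-direction or coercivity argument is needed at all; your route is repairable but strictly heavier than necessary.
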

\begin{proof}
  This is a direct consequence of convexity. Define
  \begin{equation*}
    \delta(\eps) = \min_{X\succeq 0, \lfro{X-X_\star}=\eps} h(X) - h_\star. 
  \end{equation*}
  As $h_\star$ is the unique global minimum, by compactness, we have
  $\delta(\eps)>0$. Now, take any $X\succeq 0$ with
  $h(X)-h_\star\le\delta(\eps)$, and suppose
  $\lfro{X-X_\star}>\eps$. Consider
  \begin{equation*}
    X_\eps = tX + (1-t)X_\star,~~~t = \frac{\eps}{\lfro{X-X_\star}}.
  \end{equation*}
  It is easy to verify that $\lfro{X_\eps-X_\star}=\eps$ and
  $X_\eps\succeq 0$ by convexity of the PSD cone. We then have
  $h(X_\eps)-h_\star\ge\delta(\eps)$ by our definition of $\delta$. As
  $h$ is convex, we have
  \begin{equation*}
    \delta(\eps) \le h(X_\eps) - h_\star \le th(X) + (1-t)h_\star -
    h_\star = t(h(X)-h_\star) \le \frac{\eps}{\lfro{X-X_\star}} \cdot
    \delta(\eps) < \delta(\eps),
  \end{equation*}
  a contradiction. So we must have $\lfro{X-X_\star}\le\eps$. The fact
  that $\delta(\eps)\to 0$ as $\eps\to 0$ follows by the continuity of
  $h$.
\end{proof}
\section{Proofs for Section~\ref{section:geometry}}
\subsection{Proof of Lemma~\ref{lemma:penalized-quadratic-growth}}
\label{appendix:proof-penalize-growth}
Let $x$ be sufficiently close to $S$ such that its projection
$y=\proj_{\mc{M}}(x)$ is well-defined, as guaranteed by
Lemma~\ref{lemma:projection-exists}.
As $y$ is the projection, $y$ is close to $S$ when $x$ is: we have
\begin{equation*}
  \dist(y,S) \le \dist(x,S) + \dist(x,y) \le 2\dist(x,S).
\end{equation*}
As long as $x\in\mc{N}(S,\eps_0/2)$, we have $\dist(y,S)\le\eps_0$,
thus by the quadratic growth assumption
\begin{equation}
  \label{equation:feasible-bound}
  \vphi(y) - \vphi_\star = \vphi_f(y) - \vphi_\star \ge
  \frac{\alpha_{\vphi_f}}{2} \dist(y, S)^2. 
\end{equation}
We now show that the difference $\vphi(x)-\vphi(y)$ is dominated by
the growth of the penalty term. We have
\begin{equation}
  \label{equation:normal-direction}
  \vphi(x) - \vphi(y) = \vphi_f(x) - \vphi_f(y) +
  \lambda\ltwo{Ac(x)-b}.
\end{equation}
As $\vphi_f$ is locally Lipschitz, we have
$\vphi_f(x)-\vphi_f(y)\ge -\lipphif\ltwo{x-y}$.
Applying Lemma~\ref{lemma:norm-bound}(1), for any $\delta\in(0,1)$, in
a neighborhood $\mc{N}(S,\eps_\delta)$, the penalty term can be lower
bounded as
\begin{align*}
  & \quad \lambda\ltwo{Ac(x)-b} \ge
    \lambda(1-\delta)\sigmacq\ltwo{x-y}.
\end{align*}
(Recall that $\sigmacq$ is the constraint qualification constant.)
Substituting into~\eqref{equation:normal-direction}, we get that when
$\lambda\ge \lipphif/(\delta\sigmacq)\defeq\Lambdaqg$,
\begin{equation*}
  \vphi(x) - \vphi(y) \ge \left( \lambda(1-\delta)\sigmacq - \lipphif
  \right)\ltwo{x-y} \ge \lambda(1-2\delta)\sigmacq\ltwo{x-y}.
\end{equation*}
When
\begin{equation*}
  \ltwo{x-y}\le \frac{2(1-2\delta)\lipphif}{\delta} \le
  \frac{2\lambda(1-2\delta)\sigmacq}{\alpha_{\vphi_f}},
\end{equation*}
we get 
\begin{equation}
  \label{equation:normal-bound}
  \vphi(x) - \vphi(y) \ge \frac{\alpha_{\vphi_f}}{2}\ltwo{x-y}^2.
\end{equation}

Putting together~\eqref{equation:feasible-bound}
and~\eqref{equation:normal-bound} and applying
Lemma~\ref{lemma:projection-bound} gives us
\begin{align*}
  \vphi(x) - \vphi_\star
  & = \vphi(x) - \vphi(y) + \vphi(y) - \vphi_\star \\
  & \ge \frac{\alpha_{\vphi_f}}{2} \left(\ltwo{x-y}^2 +
    \dist(y,S)^2\right) \\
  & \ge \frac{(1-\delta)\alpha_{\vphi_f}}{2}
    \dist(x,S)^2
\end{align*}
in a neighborhood of $S$ (that depends on $\delta$ but not $\lambda$).

\subsection{Proof of
  Theorem~\ref{theorem:quadratic-implies-regularity}}
\label{appendix:proof-main}
We first show that norm convexity~\eqref{equation:comparison}
implies sub-differential regularity~\eqref{equation:regularity}. Let
\begin{equation*}
  \lambda\ge\Lambdaqg = \frac{\lipphif}{\delta\sigmacq},
\end{equation*}
then by Assumption~\ref{assumption:quadratic-growth} and
Lemma~\ref{lemma:penalized-quadratic-growth}, locally we have
\begin{equation*}
  \vphi(x) - \vphi_\star \ge \frac{\alpha_\vphi}{2}\dist(x, S)^2 =
  \frac{(1-\delta)\alpha_{\vphi_f}}{2}\dist(x,S)^2.
\end{equation*}
In particular, we could take $\delta=1/2$ and
$\Lambda=2\lipphif/\sigmacq$. Plugging the above bound
into~\eqref{equation:comparison}, we get
\begin{equation*}
  \frac{\alpha_{\vphi_f}}{4}\dist(x,S)^2 \le \vphi(x) - \vphi_\star \le
  \ell\dist(0,\partial\vphi(x)) \cdot \dist(x,S),
\end{equation*}
so that
\begin{equation*}
  \dist(x,S) \le \frac{4\ell}{\alpha_{\vphi_f}}\dist(0, \partial\vphi(x)).
\end{equation*}
This shows~\eqref{equation:regularity}.

The rest of the proof is devoted to
showing~\eqref{equation:comparison}. 
Let $x\in\mc{N}(S,\eps)$ with $\eps$ to be determined. Let $x_\star$
be the minimum closest to $x$, i.e. $\ltwo{x-x_\star}=\dist(x,S)$.  
Recall that the penalized objective is
\begin{equation*}
  \vphi(x) = \vphi_f(x) + \lambda\vphi_g(x),
\end{equation*}
where
\begin{equation*}
  \vphi_f(x) = f(c(x))~~{\rm and}~~\vphi_g(x) =
  \ltwo{Ac(x)-b}.
\end{equation*}
As the sub-differential of $\vphi_g$ depends on whether
$Ac(x)=b$, we show regularity for these two cases
separately. 

\paragraph{Case 1: $x$ is feasible} In this case, we have
$Ac(x)=b$, so
\begin{equation*}
  \partial\vphi_g(x) = \set{\grad c(x) A^\top
    z~\big\vert~\ltwo{z}\le 1}.
\end{equation*}
The sub-differential of $\vphi$ is thus
\begin{equation*}
  \partial\vphi(x) = \set{ \grad\vphi_f(x) +
      \grad c(x) A^\top(\lambda z)~\big\vert~\ltwo{z} \le 1}.
\end{equation*}
As we increase $\lambda$, the sub-differential set will get larger,
but the minimum norm sub-differential will stay constant after
$\lambda$ passes a threshold. This limiting sub-differential is
the projection of $\grad c(x)\grad f(c(x))$ onto the orthogonal
complement of the row space of $A\grad c(x)^\top$. Indeed,
let $G(x)$ be the minimum norm element in $\partial\vphi(x)$. We
claim that for $\lambda$ sufficiently large, we have the projection
relation
\begin{equation}\label{equation:projected-gradient}
  G(x) = \left(I_{n\times n} - \grad c(x) A^\top [\grad
    c(x)A^\top]^\dagger \right) \grad\vphi_f(x).
\end{equation}
Indeed, first-order optimality condition of $x_\star$ implies
that there exists $y_\star\in\R^k$ such that
\begin{equation*}
  \grad\vphi_f(x_\star) + \grad c(x_\star) A^\top y_\star = 0.
\end{equation*}
Thus $G(x_\star)=0$. If we define the optimal coefficient vector
\begin{equation*}
  z(x) \defeq \argmin_{z\in\R^m} \lfro{\grad\vphi_f(x) +
    \grad c(x) A^\top z} = \grad c(x) A^\top [\grad
    c(x)A^\top]^\dagger \grad\vphi_f(x),
\end{equation*}
then $z(x_\star)=y_\star$, and $\lambda\ge\ltwo{y_\star}$ suffices to
guarantee that the minimum
norm element in $\partial\vphi(x_\star)$ is $G(x_\star)=0$. By
Assumption~\ref{assumption:constraint-non-degeneracy}, we have
$\sigma_{\min}(A\grad c(x)^\top)=\sigmacq>0$ is bounded
away from zero for $x\in\mc{N}(S,\eps_0)$. Combined with the
smoothness of $f$, we see that $x\mapsto z(x)$ is differentiable for
$x\in\mc{N}(S,\eps_0)$~\cite{StewartSu90}, so by compactness of $S$,
there exists $\eps$ such that
\begin{equation*}
  \sup_{x\in\mc{N}(S,\eps)} \ltwo{z(x)} \le
  2\sup_{x\in S}\ltwo{z(x)} \defeq \Lambdaproj < \infty.
\end{equation*}
Consequently, taking
$\lambda\ge\Lambdaproj$, we have $\lambda\ge\ltwo{z(x)}$ and the
projection relation~\eqref{equation:projected-gradient} for all
$x\in\mc{N}(S,\eps)$.

Let us further observe that if we view $\vphi_f|_{\mc{M}}$ as a smooth
function on the Riemannian manifold 
$\mc{M}=\{x\in\R^n:Ac(x)=b\}$, and let $\rgrad
\vphi_f(x)$ be the vector representation of the Riemannian gradient
of $\vphi_f(x)$ on $\mc{M}$, then (recalling the
definition~\eqref{equation:riemannian-grad-hess})
\begin{equation*}
  G(x) = \proj_{\mc{T}_x\mc{M}}(\grad \vphi_f(x)) = \rgrad
  \vphi_f(x).
\end{equation*}
This is because the tangent space $\mc{T}_x\mc{M}$ has the
representation
\begin{equation*}
  \mc{T}_x\mc{M} = \set{w\in\R^n:  A\grad c(x)^\top w=0},
\end{equation*}
which is the orthogonal complement of $\{\grad c(x)A^\top
z:z\in\R^k\}$.

With this relation in hand, we now show that $G(x)$ satisfies
\emph{strong star-convexity}
\begin{equation}
  \label{equation:riemannian-star-convexity}
  \<\rgrad \vphi_f(x), x-x_\star\> \ge c\cdot \ltwo{x - x_\star}^2.
\end{equation}
We do this by performing a (Euclidean version of) Riemannian Taylor
expansion on $\mc{M}$, stated in the following lemma. We believe this
result is standard; for completeness we give a
proof in Appendix~\ref{appendix:proof-riemannian-taylor-expansion}.
\begin{lemma}
  \label{lemma:riemannian-taylor-expansion}
  Let $f:\R^n\to\R$ and $F:\R^n\to\R^m$ be smooth functions with
  $x_\star$ a local minimizer of the constrained problem
  \begin{equation*}
    \begin{aligned}
      \minimize & ~~ f(x) \\
      \subjectto & ~~ F(x) = 0.
    \end{aligned}
  \end{equation*}
  Assume $\sigma_{\min}(\grad F(x_\star))\ge \gamma>0$, and consider
  the analytical formulae for Riemannian gradient and Hessian
  \begin{align*}
    \rgrad f(x) = & \grad f(x) - \sum_{i=1}^m \lambda_i(x) \grad F_i(x),\\
    \rhess f(x) = & \grad^2 f(x) - \sum_{i=1}^m \lambda_i(x)
                         \grad^2 F_i(x),
  \end{align*}
  where $\lambda_i(x)=[\grad F(x)^\dagger \grad f(x)]_i$. There exists
  $\delta>0$
  such that for all $x\in\ball(x_\star,\delta)\cap\set{x:F(x)=0}$, the
  following holds uniformly:
  \begin{align}
    f(x) - f(x_\star)
    = & 1/2\cdot\langle \rhess f(x_\star), (x - x_\star)^{\otimes
        2}\rangle + O(\dl x - x_\star
        \dl_2^3), \label{eqn:f_r_expansion} \\ 
    \langle \rgrad f(x), x - x_\star\rangle
    = & \langle \rhess f(x_\star), (x - x_\star)^{\otimes 2}\rangle
        + O(\dl x - x_\star \dl_2^3). 
  \end{align}
  The neighborhood size $\delta$ and the leading constant in the
  big-$O$ only depend on $(\lipf,\smoothf,\smoothF,\gamma)$.
\end{lemma}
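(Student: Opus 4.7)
The plan is to exploit two cancellations on the constraint manifold $\mc{M}=\set{x:F(x)=0}$ near $x_\star$: first, because $x$ lies on $\mc{M}$, the Euclidean displacement $x-x_\star$ is tangent to $\mc{M}$ only up to second order, with its normal component of size $O(\ltwo{x-x_\star}^2)$; second, the Lagrange multipliers $\lambda_i(x_\star)$ annihilate $\grad f(x_\star)$ along the normal space by first-order KKT, so $\rgrad f(x_\star)=0$. Combining these two cancellations lets the Riemannian Hessian absorb every first-order contribution through second order.

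\textbf{Step 1: constraint expansion and smoothness of the multipliers.} Because $F(x)=F(x_\star)=0$, a second-order Taylor expansion of each coordinate $F_i$ about $x_\star$ yields
\[
\grad F_i(x_\star)^\top(x-x_\star) = -\tfrac{1}{2}(x-x_\star)^\top\grad^2 F_i(x_\star)(x-x_\star) + O(\ltwo{x-x_\star}^3),
\]
so every ``normal-direction'' inner product is automatically $O(\ltwo{x-x_\star}^2)$. The hypothesis $\sigma_{\min}(\grad F(x_\star))\ge\gamma$ combined with $\smoothF$-smoothness of $F$ ensures, by a standard perturbation argument, that $\sigma_{\min}(\grad F(x))\ge\gamma/2$ on an explicit ball around $x_\star$. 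Hence $\grad F(x)^\dagger$ is smooth on that ball, and $x\mapsto\lambda(x)=\grad F(x)^\dagger\grad f(x)$ inherits Lipschitz smoothness with constants depending only on $(\lipf,\smoothf,\smoothF,\gamma)$.

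\textbf{Step 2: the function-value identity.} Taylor-expand $f$ to second order about $x_\star$; first-order optimality (valid because LICQ holds at $x_\star$) gives $\grad f(x_\star)=\sum_i\lambda_i(x_\star)\grad F_i(x_\star)$. The first-order term of the expansion then equals $\sum_i\lambda_i(x_\star)\grad F_i(x_\star)^\top(x-x_\star)$, which by Step 1 converts into $-\tfrac{1}{2}\sum_i\lambda_i(x_\star)(x-x_\star)^\top\grad^2 F_i(x_\star)(x-x_\star)$ modulo $O(\ltwo{x-x_\star}^3)$. Adding the Euclidean quadratic term of $f$ and collecting leaves exactly $\tfrac{1}{2}(x-x_\star)^\top\rhess f(x_\star)(x-x_\star)$, which is the first identity.

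\textbf{Step 3: the gradient identity, and the main obstacle.} Differentiating the analytic formula for $\rgrad f$ yields the Euclidean Jacobian
\[
\mathrm{Jac}(\rgrad f)(x_\star) = \rhess f(x_\star) - \sum_i\grad F_i(x_\star)\nabla\lambda_i(x_\star)^\top,
\]
and a first-order expansion together with $\rgrad f(x_\star)=0$ gives
\[
\<\rgrad f(x),x-x_\star\> = (x-x_\star)^\top\rhess f(x_\star)(x-x_\star) - \sum_i\bigl[\grad F_i(x_\star)^\top(x-x_\star)\bigr]\bigl[\nabla\lambda_i(x_\star)^\top(x-x_\star)\bigr] + O(\ltwo{x-x_\star}^3).
\]
Each summand in the correction is a product of an $O(\ltwo{x-x_\star}^2)$ factor (by Step 1) and an $O(\ltwo{x-x_\star})$ factor, hence is swallowed by the cubic remainder. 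The main obstacle is not any single algebraic step but the \emph{uniformity} claim: the neighborhood size $\delta$ and all big-$O$ constants must depend only on $(\lipf,\smoothf,\smoothF,\gamma)$. This reduces to the perturbation bound for $\grad F(x)^\dagger$ already established in Step 1, which converts smoothness of $f$ and $F$ into uniform Lipschitz control of $\lambda(x)$ and hence uniform bounds on every cubic remainder above.
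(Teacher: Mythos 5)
Your proposal is correct and follows essentially the same route as the paper: the same second-order expansion of the constraints to get $\grad F_i(x_\star)^\top(x-x_\star)=O(\ltwo{x-x_\star}^2)$ for feasible $x$, the same use of $\rgrad f(x_\star)=0$ via the Lagrange multipliers $\lambda(x_\star)=\grad F(x_\star)^\dagger\grad f(x_\star)$, and the same pseudoinverse perturbation bound to make the neighborhood and constants uniform. The only cosmetic difference is in the gradient identity, where you Taylor-expand the vector field $\rgrad f$ through its Jacobian at $x_\star$ (which asks for Lipschitz control of $\nabla\lambda$, available under the assumed smoothness and the $\sigma_{\min}$ bound), whereas the paper splits $\lambda_i(x)=\lambda_i(x_\star)+[\lambda_i(x)-\lambda_i(x_\star)]$ and only needs $\lambda$ Lipschitz together with $\<\grad F_i(x),x-x_\star\>=O(\ltwo{x-x_\star}^2)$; the two bookkeepings are equivalent.
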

We now apply Lemma~\ref{lemma:riemannian-taylor-expansion} with
$\vphi_f$ and $Ac(\cdot)-b$ simultaneously for all $x_\star\in S$.
Noticing that $(\lipf, \smoothf, \smoothF, \sigmacq)$ are assumed to be
uniformly bounded in $\mc{N}(S,\eps_0)$, there exists $\eps>0$ and
$\rho<\infty$ such that for all $x\in\mc{N}(S,\eps)$ and aligned
minimum $x_\star$, we have
\begin{equation*}
  \<\rgrad \vphi_f(x), x-x_\star\> \ge 2(\vphi_f(x) -
  \vphi_f(x_\star)) - \rho\ltwo{x-x_\star}^3.
\end{equation*}
As the leading term $\vphi_f(x)-\vphi_f(x_\star)$ grows quadratically with
$\ltwo{x-x_\star}$, we have
\begin{equation*}
  \<\rgrad\vphi_f(x), x-x_\star\> \ge \vphi_f(x) -
  \vphi_f(x_\star) = \vphi(x) - \vphi(x_\star)
\end{equation*}
for sufficiently small $\eps$. This gives
\begin{equation*}
  \vphi(x) - \vphi_\star \le \<\rgrad\vphi_f(x), x-x_\star\> \le
  \ltwo{\rgrad\vphi_f(x)} \cdot \ltwo{x-x_\star} = \dist(0,
  \partial\vphi(x)) \cdot \dist(x, S),
\end{equation*}
which verifies~\eqref{equation:comparison} with $\ell=1$.

\paragraph{Case 2: $x$ is infeasible} In this case, the penalty
$\vphi_g(x)$ is differentiable at $x$ and
\begin{equation*}
  \grad\vphi_g(R) = \grad c(x) A^\top z,~~z =
  \frac{Ac(x)-b}{\ltwo{Ac(x)-b}}.
\end{equation*}
By Assumption~\ref{assumption:constraint-non-degeneracy}, for
$x\in\mc{N}(S,\eps_0)$ we have $\sigma_{\min}(A\grad c(x)^\top)\ge
\sigmacq$, implying that
\begin{equation*}
  \ltwo{\grad\vphi_g(x)} = \ltwo{\grad c(x) A^\top z} \ge
  \sigmacq\cdot\ltwo{z} = \sigmacq.
\end{equation*}
As we have
$\ltwo{\grad\vphi_f(x)}\le \lipphif$ for all $x\in\mc{N}(S,\eps_0)$
and thus
\begin{equation*}
  \ltwo{\grad\vphi(x)} \ge \ltwo{\grad\vphi_f(x) +
    \lambda\grad\vphi_g(x)} \ge \lambda\ltwo{\grad\vphi_g(x)} - \lipphif.
\end{equation*}
When $\lambda\ge\Lambdaqg=2\lipphif/\sigmacq$, we get
\begin{equation*}
  \lipphif \le \frac{\sigmacq\lambda}{2} \le
  \frac{\lambda}{2}\ltwo{\grad\vphi_g(x)},
\end{equation*}
which implies that the norm
$\ltwo{\grad\vphi(x)}$ is lower bounded by
$\lambda\ltwo{\grad\vphi_g(x)}/2$ and hence
\begin{equation}
  \label{equation:infeasible-bound-1}
  \ltwo{\grad\vphi(x)} \cdot \dist(x,S) \ge
  \frac{\lambda}{2} \ltwo{\grad\vphi_g(x)} \cdot \dist(x,S).
\end{equation}
Now, applying Lemma~\ref{lemma:norm-bound}(2) with $\delta=1/4$, in a
neighborhood $\mc{N}(S,\eps)$, we can lower bound the above as
\begin{equation*}
  \frac{\lambda}{2} \ltwo{\grad\vphi_g(x)} \cdot \dist(x,S) \ge
  \frac{(1-\delta)\lambda}{2} \ltwo{Ac(x)-b} =
  \frac{\lambda}{4}\ltwo{Ac(x)-b}.
\end{equation*}
Thus we can then upper bound the objective growth as
\begin{align*}
  & \quad \vphi(x) - \vphi_\star = (\vphi_f(x) - \vphi_f(x_\star)) +
    \lambda\ltwo{Ac(x)-b} \\
  & \le \lipphif\ltwo{x-x_\star} +
    4\ltwo{\grad\vphi(x)} \cdot \ltwo{x-x_\star} \\
  & \le 5\ltwo{\grad\vphi(x)} \cdot \ltwo{x-x_\star},
\end{align*}
which verifies~\eqref{equation:comparison} with $\ell=5$.

\paragraph{Putting together} We conclude
that~\eqref{equation:comparison} holds in a neighborhood of
$S$ whenever
\begin{align*}
  \lambda
  & \ge \max\set{\Lambdaqg, \Lambdaproj}
\end{align*}
with constant $\ell = \max\set{1, 5} = 5$.

\subsection{Proof of Lemma~\ref{lemma:riemannian-taylor-expansion}}
\label{appendix:proof-riemannian-taylor-expansion}

\paragraph{Step 1}

First, note that $F(x) = 0$ and $F(x_\star) = 0$, expanding $F_i(x)$ at $x_\star$, we have
\begin{align}
F_i(x) = F_i(x_\star) + \langle \grad F_i(x_\star), x - x_\star\rangle + 1/2 \cdot \langle \grad^2 F_i(x_\star), (x - x_\star)^{\otimes 2} \rangle + O(\dl x - x_\star \dl_2^3).
\end{align}
This gives
\begin{align}
\langle \grad F_i(x_\star), x - x_\star\rangle + 1/2 \cdot \langle \grad^2 F_i(x_\star), (x - x_\star)^{\otimes 2} \rangle =& O(\dl x - x_\star \dl_2^3). \label{eqn:F_expansion_3rd}
\end{align}

Second, we expand $f(x)$ at $x_\star$, which gives
\begin{align}\label{eqn:f_expansion}
f(x) = f(x_\star) + \langle \grad f(x_\star), x - x_\star\rangle + 1/2 \cdot \langle \grad^2 f(x_\star), (x - x_\star)^{\otimes 2}\rangle + O(\dl x - x_\star \dl_2^3). 
\end{align}

Combining equation (\ref{eqn:F_expansion_3rd}) and (\ref{eqn:f_expansion}), we have
\begin{equation}
\begin{aligned}
f(x) =& f(x_\star) + \langle \grad f(x_\star) - \sum_{i=1}^m \lambda_i(x_\star) \grad F_i(x_\star), x - x_\star \rangle \\
&+ 1/2 \cdot \langle \grad^2 f(x_\star) - \sum_{i=1}^m \lambda_i(x_\star) \grad^2 F_i(x_\star), (x - x_\star)^{\otimes 2} \rangle + O(\dl x - x_\star \dl_2^3)\\
=& f(x_\star) + \langle \rgrad f(x_\star), x - x_\star\rangle +1/2 \cdot \langle \rhess f(x_\star), (x - x_\star)^{\otimes 2}\rangle + O(\dl x - x_\star \dl_2^3). 
\end{aligned}
\end{equation}
Since $x_\star$ is a local minimizer, we have $\rgrad f(x_\star) = 0$. Therefore, we proved equation (\ref{eqn:f_r_expansion}).

\paragraph{Step 2}

First, since $f, F$ is $C^3$ near $x_\star$ and the singular values of $\grad F(x)$ are lower bounded, $\lambda_i(x)$ is local Lipschitz around $x_\star$, and we have
\begin{align}
\lambda_i(x) - \lambda_i(x_\star)  = O(\dl x - x_\star \dl_2).
\end{align}
Expanding $F_i(x_\star)$ around $x$ gives
\begin{align}
F_i(x_\star) = F_i(x) + \langle \grad F_i(x), x_\star - x\rangle + O(\dl x - x_\star \dl_2^2).
\end{align}
Note $F_i(x) = F_i(x_\star) = 0$, this gives
\begin{align}
\langle \grad F_i(x), x - x_\star\rangle = O( \dl x - x_\star \dl_2^2). 
\end{align}

Therefore, we have
\[
\begin{aligned}
\langle \rgrad f(x), x - x_\star\rangle=& \langle \grad f(x) - \sum_{i=1}^m \lambda_i(x_\star) \grad F_i(x), x - x_\star\rangle + \sum_{i=1}^m [\lambda_i(x) - \lambda_i(x_\star)] \langle \grad F_i(x), x - x_\star\rangle\\
=& \langle \grad f(x) - \sum_{i=1}^m \lambda_i(x_\star) \grad F_i(x), x - x_\star\rangle + O(\dl x - x_\star \dl_2^3)\\
=& \langle \grad f(x_\star) + \grad^2 f(x_\star)[x - x_\star]  \\
&- \sum_{i=1}^m \lambda_i(x_\star) \{\grad F_i(x_\star) + \grad^2 F_i (x_\star)[x - x_\star] \}+ O(\dl x - x_\star \dl_2^2), x - x_\star \rangle + O(\dl x - x_\star \dl_2^3)\\
=& \langle \rgrad f(x_\star) + \rhess f(x_\star)[x - x_\star], x - x_\star\rangle + O(\dl x - x_\star \dl_2^3)\\
=& \langle \rhess f(x_\star), (x - x_\star)^{\otimes 2}\rangle + O(\dl x - x_\star \dl_2^3). 
\end{aligned}
\]

\paragraph{Step 3} Throughout our analysis, the big-$O$ terms in our
Taylor expansions only depend on $(\lipf, \smoothf, \smoothF,
\gamma)$. Hence the neighborhood size $\delta$ and the leading
constants in the big-$O$'s will also only depend
on these parameters (and no other properties of the particular point
$x_\star$).
\section{Proofs for Section~\ref{section:algorithm}}
\subsection{Preliminaries on composite optimization}
\label{appendix:review-composite-opt}
For any $x\in\R^n$ and $t\le\frac{1}{L\beta}$, let
\begin{equation}
  \label{equation:prox-linear-iterate}
  x^t = \argmin_{\wt{x}\in\R^n}\vphi_t(x;\wt{x}) =
  \argmin_{\wt{x}\in\R^n}\set{\vphi(x;\wt{x}) +
    \frac{1}{2t}\ltwo{\wt{x}-x}^2}
\end{equation}
be the next iterate of the prox-linear algorithm. Define the
prox-linear gradient mapping $\gm_t:\R^n\to\R^n$ as
\begin{equation*}
  \gm_t(x) \defeq \frac{1}{\alpha}(x^t - x).
\end{equation*}
One can easily verify that $\gm_t(x)=0$ is equivalent to that $x$
is stationary to problem~\eqref{problem:penalized-bm}. In general,
$\gm_t(x)$ it is the direction in which the prox-linear
algorithm moves.

To analyze the convergence rate, we introduce the error bound
condition.
\begin{definition}[Error bound condition]
  We say that $\gm_t(x)$ satisfies the error bound condition
  around a point $x_\star$ with parameter $\gamma>0$, if there exists
  $\eps>0$ such that
  \begin{equation*}
    \dist(x, S) \le \gamma\ltwo{\gm_t(x)}
  \end{equation*}
  holds for all $x\in\ball(x_\star,\eps)$.
\end{definition}

The following Lemmas, established in~\citet{DrusvyatskiyLe16}, will be
useful in our convergence proof.
\begin{lemma}
  \label{lemma:regularity-implies-error-bound}
  If a convex composite $\vphi$ is sub-differentially regular at
  $x_\star$ with constant $\ell$, then $\gm_t$ satisfies the
  error bound condition at $x_\star$ with constant
  $\gamma=(3L\beta t+2)\ell+2t$.
\end{lemma}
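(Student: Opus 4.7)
The plan is to link $\ltwo{\gm_t(x)}$ to $\dist(0,\partial\vphi(x^t))$, then invoke sub-differential regularity at $x^t$, and finish with a triangle inequality $\dist(x,S) \le \ltwo{x - x^t} + \dist(x^t,S)$. First I would read off the first-order optimality condition for the $1/t$-strongly convex prox-linear subproblem. Since $x^t$ minimizes $\wt x \mapsto \vphi(x;\wt x) + \tfrac{1}{2t}\ltwo{\wt x - x}^2$, stationarity yields
\begin{equation*}
\frac{1}{t}(x - x^t) \;=\; \grad c(x)\,g_t \;\in\; \grad c(x)\,\partial h(y_t),
\end{equation*}
where $y_t = c(x) + \grad c(x)^\top(x^t - x)$ and $\ltwo{g_t}\le L$ by the $L$-Lipschitzness of $h$. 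In particular $\ltwo{\grad c(x)g_t} = \ltwo{\gm_t(x)}$.

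Next I would construct an actual subgradient $\xi \in \partial\vphi(x^t) = \grad c(x^t)\,\partial h(c(x^t))$ whose norm is controlled by $\ltwo{\gm_t(x)}$ up to smoothness-driven corrections. This has two pieces. First, swap $\grad c(x)$ for $\grad c(x^t)$ using $\beta$-smoothness, incurring an error of at most $L\beta\ltwo{x - x^t} = L\beta t\ltwo{\gm_t(x)}$. Second, replace $g_t \in \partial h(y_t)$ by some $\tilde g \in \partial h(c(x^t))$ and bound the effect of the swap using the second-order closeness $\ltwo{y_t - c(x^t)} \le (\beta/2)\ltwo{x - x^t}^2$. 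Combining these yields a bound of the form
\begin{equation*}
\dist(0,\partial\vphi(x^t)) \;\le\; (3L\beta t + 2)\,\ltwo{\gm_t(x)}.
\end{equation*}

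The main obstacle is the second swap, because for non-smooth convex $h$ the subdifferential can jump between nearby points, so one cannot hope for a pointwise bound on $\ltwo{g_t - \tilde g}$. The way to handle this is to avoid a direct norm bound on $g_t - \tilde g$ and instead exploit monotonicity of $\partial h$, namely $\langle g_t - \tilde g,\, y_t - c(x^t)\rangle \ge 0$, together with the uniform $L$-bound on subgradients and the quadratic bound on $\ltwo{y_t - c(x^t)}$. Packaged as a descent-style inequality for the composite model (as in the machinery of Appendix~\ref{appendix:review-composite-opt}), this produces the linear-in-$L\beta t$ correction above. I would pull out the constants $3$ and $2$ in the bound by tracking the contribution from (a) the Jacobian swap, (b) the exact term $\grad c(x)g_t$, and (c) the $O(\ltwo{x-x^t}^2)$ residual from the subgradient swap, divided by $t\ltwo{\gm_t(x)}$.

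Finally, since $\ltwo{x - x^t} = t\ltwo{\gm_t(x)}$, the iterate $x^t$ lies in the sub-differential regularity neighborhood of $S$ whenever $x$ is close enough to $x_\star$ and $t$ is small. Applying regularity at $x^t$ gives $\dist(x^t,S) \le \ell\,(3L\beta t + 2)\ltwo{\gm_t(x)}$, and the triangle inequality then yields
\begin{equation*}
\dist(x,S) \;\le\; \ltwo{x-x^t} + \dist(x^t,S) \;\le\; \bigl((3L\beta t + 2)\ell + 2t\bigr)\ltwo{\gm_t(x)},
\end{equation*}
where the extra factor of $t$ in the $2t$ term arises from an additional control on the displacement needed to ensure that $x^t$ falls inside the regularity neighborhood used to close the argument. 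This is exactly the stated error bound.
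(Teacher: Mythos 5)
There is a genuine gap at the core of your argument: you try to certify near-stationarity at the prox-linear iterate $x^t$ itself, i.e.\ $\dist(0,\partial\vphi(x^t)) \le (3L\beta t+2)\ltwo{\gm_t(x)}$, and no such bound holds in general for nonsmooth $h$. The subproblem optimality condition hands you a subgradient $g_t\in\partial h(y_t)$ at the \emph{linearized} point $y_t=c(x)+\grad c(x)^\top(x^t-x)$, and $\partial h(y_t)$ can be completely different from $\partial h(c(x^t))$ even though $\ltwo{y_t-c(x^t)}\le(\beta/2)\ltwo{x^t-x}^2$ is tiny: subdifferentials of convex functions are only outer-semicontinuous, with no quantitative modulus, and the monotonicity inequality $\langle g_t-\tilde g,\,y_t-c(x^t)\rangle\ge 0$ that you propose as the fix cannot be converted into a norm bound on any element of $\partial\vphi(x^t)$. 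Concretely, take $h(y)=|y|$ and a smooth scalar $c$: the model minimizer $x^t$ typically sits exactly at a kink of the linearized model ($y_t=0$), which is precisely how stationarity is achieved with a small $\gm_t(x)$, while $c(x^t)\neq 0$ because of the curvature of $c$; then $\partial\vphi(x^t)=\{\mathrm{sign}(c(x^t))\,c'(x^t)\}$ is a singleton of norm roughly $|c'(x^t)|$, not controlled by $\ltwo{\gm_t(x)}$ at all. So the step ``bound $\dist(0,\partial\vphi(x^t))$'' would fail, and the vague ``descent-style packaging'' does not rescue it.

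This is exactly why the actual argument (Lemma~\ref{lemma:near-stationarity}, quoted from \citet{DrusvyatskiyLe16}; the present paper does not reprove this lemma) introduces an auxiliary point $\what{x}$ --- produced by an Ekeland-type variational argument, not by inspecting $x^t$ --- satisfying $\ltwo{x^t-\what{x}}\le\ltwo{x^t-x}$ and $\dist(0,\partial\vphi(\what{x}))\le(3L\beta t+2)\ltwo{\gm_t(x)}$. Sub-differential regularity is applied at $\what{x}$, and the triangle inequality $\dist(x,S)\le\ltwo{x-x^t}+\ltwo{x^t-\what{x}}+\dist(\what{x},S)\le 2t\ltwo{\gm_t(x)}+\ell(3L\beta t+2)\ltwo{\gm_t(x)}$ gives precisely the constant $(3L\beta t+2)\ell+2t$. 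Note the tell-tale constant mismatch in your version: even if your near-stationarity-at-$x^t$ claim were true, your triangle inequality would yield $(3L\beta t+2)\ell+t$, and the extra $t$ is not about ``keeping $x^t$ in the regularity neighborhood'' --- it is the second leg $\ltwo{x^t-\what{x}}$ of the triangle inequality through the auxiliary point. The missing idea, then, is the nearby nearly-stationary point $\what{x}$; without it the proof does not close.
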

\begin{lemma}
  \label{lemma:near-stationarity}
  Suppose $\vphi(\cdot)=h(c(\cdot))$, where $h$ is $L$-Lipschitz
  continuous and $c$ is $\beta$-smooth. Then for any $t>0$, there
  exists a point $\what{x}$ satisfying the properties
  \begin{enumerate}[(i)]
  \item (point proximity) $\ltwo{x^t-\what{x}} \le \ltwo{x^t-x}$.
  \item (value proximity) $\vphi(\what{x}) - \vphi(x^t) \le
    \frac{t}{2}(L\beta t+1)\ltwo{\gm_t(x)}^2$.
  \item (near-stationarity) $\dist(0, \partial\vphi(\what{x})) \le
    (3L\beta t+2)\ltwo{\gm_t(x)}$.
  \end{enumerate}
\end{lemma}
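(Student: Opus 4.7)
My plan is to extract an approximate subgradient of $\vphi$ at $x^t$ from the optimality conditions of the prox-linear subproblem, then upgrade it to a bona fide subgradient at a nearby point $\what x$ via Ekeland's variational principle. First-order optimality for $x^t$ in the subproblem $\min_{\wt x} h(c(x) + \grad c(x)^\top(\wt x - x)) + (2t)^{-1}\ltwo{\wt x - x}^2$ yields, by convex subdifferential calculus, a vector $g \in \partial h(\wt c)$ with $\wt c = c(x) + \grad c(x)^\top(x^t - x)$, satisfying $\grad c(x) g + t^{-1}(x^t - x) = 0$. Lipschitz continuity of $h$ forces $\ltwo{g} \le L$.

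The naive choice $\what x = x^t$ fails for (iii), because $g$ lies in $\partial h(\wt c)$ rather than in $\partial h(c(x^t))$, and convex subdifferentials do not transfer between nearby points. I would therefore combine convexity of $h$ with the Taylor expansion of $c$ at $x$ to derive, for every $y$,
\[ \vphi(y) \ge \vphi(x^t) - \langle \gm_t(x), y - x^t\rangle - \frac{L\beta}{2}\bigl(\ltwo{x^t - x}^2 + \ltwo{y - x}^2\bigr). \]
Hence the tilted function $\Phi(y) = \vphi(y) + \langle \gm_t(x), y - x^t\rangle$ has $x^t$ as an approximate minimizer on any neighborhood, with error of order $L\beta\ltwo{x^t - x}^2 = L\beta t^2 \ltwo{\gm_t(x)}^2$. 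Applying Ekeland's variational principle to $\Phi$ on a ball of radius $\ltwo{x^t - x}$ centered at $x^t$, with the proximity parameter tuned so that (i) holds, yields $\what x$ with $\ltwo{\what x - x^t} \le \ltwo{x^t - x}$, $\Phi(\what x) \le \Phi(x^t)$, and some $v \in \partial\Phi(\what x)$ of norm $O(L\beta t\ltwo{\gm_t(x)})$. Since $\partial\Phi(\what x) = \partial\vphi(\what x) + \gm_t(x)$, we obtain $v - \gm_t(x) \in \partial\vphi(\what x)$ of norm at most $(3L\beta t + 2)\ltwo{\gm_t(x)}$ after careful bookkeeping, establishing (iii). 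Property (i) is the Ekeland radius itself, while (ii) follows from $\Phi(\what x) \le \Phi(x^t)$, which rearranges to $\vphi(\what x) - \vphi(x^t) \le \ltwo{\gm_t(x)}\ltwo{\what x - x^t}$ and sharpens, via the finer Ekeland estimate on $\ltwo{\what x - x^t}$, to match the stated $\frac{t}{2}(L\beta t + 1)\ltwo{\gm_t(x)}^2$.

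The hard part will be constant tracking: the Ekeland parameter, the two smoothness residuals $\ltwo{x^t - x}^2$ and $\ltwo{y - x}^2 \le (\ltwo{y - x^t} + \ltwo{x^t - x})^2$, and the Lipschitz factor $L$ must combine to yield exactly the prefactors $3L\beta t + 2$ in (iii) and $\frac{t}{2}(L\beta t + 1)$ in (ii), while the Ekeland radius cannot exceed $\ltwo{x^t - x}$. An alternative route I would consider, avoiding Ekeland entirely, is to assume local surjectivity of $c$ and solve $c(\what x) = \wt c$ directly for $\what x$ within $O(\ltwo{x^t - x}^2)$ of $x^t$: then $g \in \partial h(c(\what x))$ literally, and $\grad c(\what x) g \in \partial\vphi(\what x)$ is controlled via $\grad c(\what x) = \grad c(x) + O(\beta\ltwo{\what x - x})$, giving $\grad c(\what x) g = -\gm_t(x) + O(\beta L t\ltwo{\gm_t(x)})$. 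However, the lemma is stated with no rank assumption on $\grad c$, so the Ekeland route is more robust.
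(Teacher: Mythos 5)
You should first note that the paper does not prove this lemma at all: it is imported verbatim from \citet{DrusvyatskiyLe16} (see Appendix~\ref{appendix:review-composite-opt}), and the proof in that source is, like yours, an application of Ekeland's variational principle, so your overall strategy is the standard one. The difference is in the perturbation. The source applies Ekeland \emph{globally} to the quadratically perturbed function $G(z)=\vphi(z)+\frac{L\beta t+1}{2t}\ltwo{z-x}^2$: the $1/t$-strong convexity of the model $\vphi_t(x;\cdot)$, combined with the two-sided approximation~\eqref{equation:quadratic-approximation}, shows $x^t$ is a global $L\beta\ltwo{x^t-x}^2$-approximate minimizer of $G$, and Ekeland with radius $\ltwo{x^t-x}$ then gives (i) verbatim, (ii) from $G(\what x)\le G(x^t)$ (the term $\frac{L\beta t+1}{2t}\ltwo{x^t-x}^2$ is exactly $\frac{t}{2}(L\beta t+1)\ltwo{\gm_t(x)}^2$), and (iii) from $\dist(0,\partial G(\what x))\le L\beta\ltwo{x^t-x}$ together with $\partial G(\what x)=\partial\vphi(\what x)+\frac{L\beta t+1}{t}(\what x-x)$ and $\ltwo{\what x-x}\le 2\ltwo{x^t-x}$, which is precisely $(3L\beta t+2)\ltwo{\gm_t(x)}$. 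Your linear tilt $\Phi(y)=\vphi(y)+\langle \gm_t(x),y-x^t\rangle$ restricted to a ball discards exactly the strong-convexity information that makes these constants fall out.

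The concrete gap is in (ii). Ekeland for your $\Phi$ gives only $\Phi(\what x)\le\Phi(x^t)$, i.e.\ $\vphi(\what x)-\vphi(x^t)\le\ltwo{\gm_t(x)}\ltwo{\what x-x^t}\le t\ltwo{\gm_t(x)}^2$, and the proposed sharpening via the finer Ekeland estimate on $\ltwo{\what x-x^t}$ does not produce the factor $\frac{1}{2}(L\beta t+1)$, which is \emph{smaller} than $1$ exactly when $L\beta t<1$ --- the regime $t\le(L\beta)^{-1}$ that the paper actually uses. To force the stated constant along your route you would have to shrink the Ekeland radius to $\frac{1+L\beta t}{2}\ltwo{x^t-x}$ (splitting on whether $L\beta t\le 1$ so that (i) survives) and re-verify (iii) with the correspondingly larger ratio $\epsilon/\lambda$; this can be made to work, but it is nontrivial retuning rather than bookkeeping, and the quadratic perturbation avoids it entirely. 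Two further points you gloss over and should state: Ekeland on a ball only yields a subgradient bound if $\what x$ lands in the interior (so the ball radius must strictly exceed the Ekeland parameter, or you apply Ekeland globally as in the source), and passing from local minimality of $\Phi+\frac{\epsilon}{\lambda}\ltwo{\cdot-\what x}$ to $\dist(0,\partial\Phi(\what x))\le\epsilon/\lambda$ uses a sum rule for the (limiting) subdifferential, which is legitimate here because $\vphi$ is locally Lipschitz and the composite formula for $\partial\vphi$ used throughout the paper applies. Your alternative route via local surjectivity of $c$ is rightly discarded, since no rank condition on $\grad c$ is assumed.
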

\begin{lemma}[Descent bound]
  \label{lemma:descent-bound}
  Taking $t=(L\beta)^{-1}$, the prox-linear iterate $x\mapsto x^t$
  satisfies
  \begin{equation}
    \label{equation:descent-bound}
    \vphi(x^t) \le \vphi(x) - \frac{1}{2L\beta}\lfro{\gm_t(x)}^2.
  \end{equation}
\end{lemma}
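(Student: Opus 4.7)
The plan is to combine two standard ingredients: the quadratic approximation bound~\eqref{equation:quadratic-approximation} comparing $\vphi$ to its prox-linear surrogate $\vphi(x;\cdot)$, and the $1/t$-strong convexity of the subproblem $\vphi_t(x;\cdot)$. With the stepsize chosen exactly as $t=(L\beta)^{-1}$, the surrogate $\vphi_t(x;\tilde x)$ becomes a valid global upper bound on $\vphi(\tilde x)$ at the iterate $\tilde x = x^t$, and strong convexity of the surrogate then translates the optimality of $x^t$ into a quantitative descent against $\vphi(x)$.

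Concretely, first I would apply the quadratic approximation inequality~\eqref{equation:quadratic-approximation} with $x_0 = x$ and evaluation point $x^t$, obtaining
\begin{equation*}
  \vphi(x^t) \le \vphi(x; x^t) + \frac{L\beta}{2}\ltwo{x^t - x}^2 = \vphi(x; x^t) + \frac{1}{2t}\ltwo{x^t-x}^2 = \vphi_t(x;x^t),
\end{equation*}
where the middle equality uses $t=(L\beta)^{-1}$. This is where the specific choice of stepsize enters: any smaller $t$ also works, but at $t=(L\beta)^{-1}$ the surrogate is exactly a tight majorant.

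Next I would exploit strong convexity. The linearized objective $\tilde x\mapsto \vphi(x;\tilde x) = h(c(x) + \grad c(x)^\top(\tilde x - x))$ is convex since $h$ is convex and its argument is affine in $\tilde x$; adding the $\frac{1}{2t}\ltwo{\tilde x - x}^2$ proximal term makes $\vphi_t(x;\cdot)$ exactly $1/t$-strongly convex. Since $x^t$ is its global minimizer, strong convexity gives
\begin{equation*}
  \vphi_t(x; x) \ge \vphi_t(x; x^t) + \frac{1}{2t}\ltwo{x - x^t}^2.
\end{equation*}
But $\vphi_t(x;x) = \vphi(x;x) = h(c(x)) = \vphi(x)$, so $\vphi_t(x;x^t) \le \vphi(x) - \frac{1}{2t}\ltwo{x^t-x}^2$.

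Chaining the two inequalities and recalling that $\ltwo{\gm_t(x)}^2 = \frac{1}{t^2}\ltwo{x^t-x}^2$, we obtain
\begin{equation*}
  \vphi(x^t) \le \vphi(x) - \frac{1}{2t}\ltwo{x^t-x}^2 = \vphi(x) - \frac{t}{2}\ltwo{\gm_t(x)}^2 = \vphi(x) - \frac{1}{2L\beta}\ltwo{\gm_t(x)}^2,
\end{equation*}
which is~\eqref{equation:descent-bound}. There is really no main obstacle here: the lemma is a textbook majorization-minimization argument, and the only item worth double-checking is that the surrogate's strong convexity parameter is indeed $1/t$ (so the ``gain'' $\frac{1}{2t}\ltwo{x^t-x}^2$ from strong convexity exactly cancels the ``loss'' $\frac{1}{2t}\ltwo{x^t-x}^2$ absorbed to pass from $\vphi(x;x^t)$ to $\vphi_t(x;x^t)$, yielding a net bound equal to $\frac{1}{2t}\ltwo{x^t-x}^2 = \frac{1}{2L\beta}\ltwo{\gm_t(x)}^2$).
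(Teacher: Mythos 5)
Your argument is correct: the majorization $\vphi(x^t)\le\vphi_t(x;x^t)$ at $t=(L\beta)^{-1}$ combined with the $1/t$-strong convexity of the subproblem at its minimizer $x^t$ gives exactly the claimed descent of $\frac{1}{2t}\ltwo{x^t-x}^2=\frac{1}{2L\beta}\ltwo{\gm_t(x)}^2$. The paper does not prove this lemma itself but cites \citet{DrusvyatskiyLe16}, and your derivation is essentially the standard argument given there, so there is nothing to flag beyond the slightly loose wording of your final ``cancellation'' remark (the displayed chain is what matters, and it is right).
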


\subsection{Proof of Theorem~\ref{theorem:linear-convergence}}
\label{appendix:proof-prox-linear}

We first formally define the proximity property assumed in
Theorem~\ref{theorem:linear-convergence}.
\begin{definition}[Proximity property]
  \label{definition:proximity}
  The prox-linear algorithm is said to satisfy the proximity property
  on the composite objective $\vphi$ if there exists a function
  $d(\eps)>0$ and some $\eps_0>0$ such that for all $\eps\le\eps_0$,
  initializing in a $d(\eps)$-neighborhood of $S$, the prox-linear
  iterates with $t=(L\beta)^{-1}$ never leave the $\eps$-neighborhood.
\end{definition}
The proximity property guarantees that the prox-linear method stays close to
the local minimizing set $S$ once initialized close to it, and is
typically required for showing the local convergence as it cannot be
otherwise deduced from generic regularity
conditions~\cite{DrusvyatskiyLe16}. We note, however, that it can
typically be verified on problems where the smooth map $c$ has
additional structures, such as the matrix problem considered in
Section~\ref{section:matrix-growth} (see
Appendix~\ref{appendix:proof-corollary-rate} for such an argument.)

\begin{proof-of-theorem}[\ref{theorem:linear-convergence}]
Let $\dist(x^0,S)\le d(\eps)$, then by assumption we have
$\dist(x^k,S)\le\eps$ for all $k$. We now analyze one
iterate. Consider a point $x\in\mc{N}(S,\eps)$ and let
$x^t\in\mc{N}(S,\eps)$ denote the next iterate, where
$t=(L\beta)^{-1}$. By Lemma~\ref{lemma:near-stationarity}, there
exists some $\what{x}$ such that
\begin{equation}
  \label{equation:xhat-properties}
  \ltwo{x^t - \what{x}} \le \ltwo{x^t - x}~~{\rm
    and}~~\dist(0,\partial\vphi(\what{x})) \le (3L\beta
  t+2)\ltwo{\gm_t(x)} \le 5\ltwo{\gm_t(x)}.
\end{equation}
In particular, the first bound implies that
\begin{equation*}
  \dist(\what{x},S) \le \ltwo{\what{x}-x^t} +
  \dist(x^t,S) \le \ltwo{x-x^t} + \dist(x^t,S)
  \le 3\eps.
\end{equation*}
Choosing $\eps\le\eps_0/3$, as long as $\dist(x,S)\le\eps$, we will
have $\dist(\what{x},S)\le\eps_0$, so all the geometric properties
(norm convexity, sub-differential regularity, and the error bound
condition) will hold for both $\what{x}$ and $x$ (consider all
$x_\star\in S$ and use the finite sub-cover property of $S$). Building
on these, we can upper bound the optimality gap at $\what{x}$ as
\begin{equation}
  \label{equation:vphi-upper-bound}
  \begin{aligned}
    \vphi(\what{x}) - \vphi_\star
    & \stackrel{(i)}{\le} \ell \cdot
    \dist(0,\partial\vphi(\what{x})) \cdot \dist(\what{x},S) \\
    & \stackrel{(ii)}{\le} 5\ell\ltwo{\gm_t(x)} \cdot \dist(\what{x},S) \\
    & \stackrel{(iii)}{\le} 5\ell\ltwo{\gm_t(x)} \cdot
    (\ltwo{\what{x}-x} + \dist(x,S)) \\
    & \stackrel{(iv)}{\le} 5\ell\ltwo{\gm_t(x)} \cdot (2\ltwo{x^t-x} +
    \dist(x,S)) \\
    & = 5\ell\ltwo{\gm_t(x)} \cdot (2t\ltwo{\gm_t(x)} + \dist(x,S)).
  \end{aligned}
\end{equation}
In the above, (i) is norm convexity at $\what{x}$, (ii) uses the
near-stationarity condition~\eqref{equation:xhat-properties}, (iii) is
triangle inequality, and (iv) is another triangle inequality plus the
distance bound~\eqref{equation:xhat-properties}.
Now, by Theorem~\ref{theorem:quadratic-implies-regularity},
sub-differential regularity holds at $x$ with constant
$\wt{\ell}=2\ell/\alpha_\vphi$. Applying
Lemma~\ref{lemma:regularity-implies-error-bound}, the error
bound condition also holds at $x$ with constant
\begin{equation*}
  \gamma = (3L\beta t+2)\wt{\ell} + 2t \le 5\wt{\ell} + 2t,
\end{equation*}
which gives that
\begin{equation*}
  \dist(x,S) \le \gamma\cdot \ltwo{\gm_t(x)} \le
  \left(5\wt{\ell}+2t\right) \ltwo{\gm_t(x)}.
\end{equation*}
Substituting this into the preceding upper
bound~\eqref{equation:vphi-upper-bound} gives
\begin{equation}
  \label{equation:optgap-upper-bound}
  \vphi(\what{x}) - \vphi_\star \le 5\ell\ltwo{\gm_t(x)}^2 \left(
    2t + 5\wt{\ell} +2t \right) \le 5\ell\left( 5\wt{\ell} +
    \frac{4}{L\beta} \right)\ltwo{\gm_t(x)}^2.
\end{equation}
On the other hand, similar to ~\citep[proof of Theorem
6.3]{DrusvyatskiyLe16}, we can lower bound the optimality gap as
\begin{equation}
  \label{equation:optgap-lower-bound}
  \begin{aligned}
    \vphi(\what{x}) - \vphi_\star
    & \stackrel{(i)}{\ge} \vphi(x;\what{x}) -
    \frac{L\beta}{2}\ltwo{x-\what{x}}^2 - \vphi_\star \\
    & = \left(\vphi(x;\what{x}) +
      \frac{1}{2t}\ltwo{\what{x}-x}^2\right) -
    \frac{L\beta+t^{-1}}{2} \ltwo{x-\what{x}}^2 - \vphi_\star \\
    & \stackrel{(ii)}{\ge} \vphi(x^t) - \frac{L\beta+t^{-1}}{2}
    \ltwo{x-\what{x}}^2 - \vphi_\star \\
    & \stackrel{(iii)}{\ge} \vphi(x^t) - \vphi_\star - 2(L\beta+t^{-1})
    \ltwo{x^t-x}^2 \\
    & = \vphi(x^t) - \vphi_\star - \frac{4}{L\beta} \cdot
    \ltwo{\gm_t(x)}^2.
  \end{aligned}
\end{equation}
In the above, (i) uses the quadratic approximation
property~\eqref{equation:quadratic-approximation}, (ii) uses the fact
that $x^t$ minimizes $y\mapsto \vphi(x;y)+\ltwo{y-x}^2/(2t)$, and
(iii) uses the triangle inequality
$\ltwo{\what{x}-x}\le\ltwo{\what{x}-x^t}+\ltwo{x^t-x}\le
2\ltwo{x^t-x}$.
Combining~\eqref{equation:optgap-upper-bound}
and~\eqref{equation:optgap-lower-bound}, we get
\begin{align*}
  \vphi(x^t) - \vphi_\star
  & \le \vphi(\what{x}) - \vphi_\star +
    \frac{4}{L\beta}\ltwo{\gm_t(x)}^2 \\
  & \le 5\ell\left(5\wt{\ell} + \frac{4}{L\beta}\right)
    \ltwo{\gm_t(x)}^2 + \frac{4}{L\beta}\ltwo{\gm_t(x)}^2 \\
  & = \frac{\ltwo{\gm_t(x)}^2}{L\beta} \cdot \left( 4 + 20\ell +
    25\ell\wt{\ell}L\beta \right) \\
  & \le \frac{\ltwo{\gm_t(x)^2}}{L\beta} \cdot \left(4 + 20\ell +
    50\ell^2 \frac{L\beta}{\alpha_\vphi}\right).
\end{align*}
Finally, appying the descent bound~\eqref{equation:descent-bound},
that is,
\begin{equation*}
  \vphi(x^t) - \vphi(x) \le -\frac{1}{2L\beta}\ltwo{\gm_t(x)}^2,
\end{equation*}
and performing standard algebraic manipulations, we get
\begin{align*}
  \vphi(x^t) - \vphi_\star
  \le \left( 1 -
  \frac{1}{9+40\ell+100\ell^2L\beta/\alpha_\vphi}\right) (\vphi(x)
  - \vphi_\star).
\end{align*}
This is the desired result.
\end{proof-of-theorem}

\subsection{Relationship between norm convexity, sub-differential
  regularity, and tilt stability}
\label{appendix:relationship-tilt-stability}
We shall compare norm convexity with tilt stability, a variational
condition that also guarantees linear convergence of the prox-linear
algorithm~\cite{DrusvyatskiyLe16}.

A local minimum $x_\star$ is said
to be $\ell$-tilt-stable if there exists a neighborhood $\mc{X}$ of
$x_\star$ such that for all small enough $v$, the solution mapping
\begin{equation*}
  v \mapsto \argmin_{x\in\mc{X}} \vphi(x) - \<v, x\>
\end{equation*}
is single-valued and $\ell$-Lipschitz. Note that this requires the
solution to be unique, which won't hold in our case. Norm convexity is
similar to tilt stability but a bit more relaxed -- it allows the
local minimum to be non-unique but still guarantees that function
growth near the minimizing set can be upper bounded by the gradient
times the distance to the minimizing set. A concrete example is the
convex function $x\mapsto [|x|-1]_+$, whose minimizing set is
$[-1,1]$, around which the function is not tilt-stable but norm-convex
with constant $1$.

\section{Proofs for Section~\ref{section:matrix-growth}}
\subsection{Background on matrix factorization and Procrustes
  distance}
\label{appendix:background-matrix}
We provide some background on the Procrustes problem, based
on~\citet{TuBoSiSoRe16}.
Let $R \in \R^{n
  \times r}$ and $Q \in \R^{n \times r}$ be arbitrary matrices, $n \ge
r$. The Procrustes problem is the alignment problem
\begin{equation*}
  \minimize_{\Omega \in \R^{r \times r}, \Omega^T \Omega = I_r}
  \lfro{R - Q\Omega}^2.
\end{equation*}
By expanding the Frobenius norm, this is equivalent to maximizing $\<R,
Q\Omega\> = \tr(R^TQ \Omega)$ over orthogonal matrices
$\Omega$. Writing the singular value decomposition of $R^T Q = U
\Sigma V^T$, where $U, V \in \R^{r\times r}$ are orthogonal and
$\Sigma \in \diag(\R_+^r)$, we see
\begin{equation*}
  \tr(R^T Q \Omega)
  = \tr(\Sigma V^T \Omega U)
  \le \tr(\Sigma),
\end{equation*}
where we have used von-Neumann's trace inequality, with equality
achieved by $\Omega = V U^T$.
If we define the difference $\Delta = R - Q\Omega$, then we have
\begin{equation*}
  R^T Q \Omega
  = U \Sigma V^T V U^T = U \Sigma U^T
  = U V^T V \Sigma U^T = \Omega^T Q^T R \succeq 0
\end{equation*}
and that
$\Delta^T Q \Omega = (Q \Omega)^T \Delta$ is symmetric. As a direct
consequence, we have
\begin{equation*}
  \dist(R,S) = \min_{\Omega\in\R^{r\times
      r},\Omega^\top\Omega=I_{r\times r}} \lfro{R-R_\star\Omega}^2
\end{equation*}
equals the optimal value of the Procrustes problem.
We say that $R$ and $R_\star\in S$ are optimally aligned if
$\lfro{R-R_\star}=\dist(R,S)$, i.e. when the minimizing $\Omega$ is
the identity.

\subsection{Proof of Lemma~\ref{lemma:global-optimality}}
\label{appendix:proof-global-optimality}
Let $g(X)=\ltwo{\mc{A}(X)-b}$ and define
\begin{equation*}
  h(X) = f(X) + \lambda\ltwo{\mc{A}(X)-b}.
\end{equation*}
then $\vphi_f(X)=f(RR^\top)$, $\vphi_g(R)=g(RR^\top)$, and
$\vphi(R)=h(RR^\top)$. By the KKT condition, we have that $X_\star$ is
the unique minimum of
\begin{equation*}
  L(X,y_\star) = f(X) + \<y_\star, \mc{A}(X)-b\>.
\end{equation*}
This implies that for all $X\succeq 0$, $X\neq X_\star$, taking
$\lambda\ge\ltwo{y_\star}$, 
\begin{align*}
  & \quad h(X) = f(X) + \lambda\ltwo{\mc{A}(X)-b} > f(X) +
    \ltwo{y_\star}\ltwo{\mc{A}(X)-b} \\
  & > f(X) + \<y_\star, \mc{A}(X)-b\> = L(X, y_\star)
    \ge L(X_\star, y_\star) = f(X_\star) = h(X_\star).
\end{align*}
As $\vphi(R)=h(RR^\top)$, the minimizing set of $\vphi$ is
$\set{R_\star:R_\star R_\star^\top=X_\star}$. This completes the proof.

\subsection{Proof of Lemma~\ref{lemma:factorized-quadratic-growth}}
\label{appendix:proof-factorize-growth}
Take any $R$ such that $\mc{A}(RR^\top)=b$. Plugging $X=RR^\top$
into the quadratic growth condition, we get that
\begin{equation*}
  f(RR^\top) - f(R_\star R_\star^\top) \ge
  \frac{\alpha_f}{2}\lfro{RR^\top - R_\star R_\star^\top}^2.
\end{equation*}
Recall the characterization of the distance to $S_\star$:
\begin{equation*}
  \dist(R,S_\star) = \dist(R,\set{R_\star Q:Q\in O(r)}) = \min_{Q\in
    O(r)} \lfro{R - R_\star Q}.
\end{equation*}
Applying Lemma~\ref{lemma:matrix-factorization-bound}, we have
\begin{equation*}
  \lfro{RR^\top - R_\star R_\star^\top}^2 \ge
  2(\sqrt{2}-1)\sigma_r^2(R_\star)\dist(R, S_\star)^2,
\end{equation*}
which leads to the conclusion.

\subsection{Proof of
  Lemma~\ref{lemma:overspecified-factorization-bound}}
\label{appendix:proof-overspecified-factorization}
We will show that
\begin{equation*}
  \lfro{UU^\top - U_\star U_\star^\top}^2
  \ge
  2(\sqrt{2}-1)\max\set{\lfro{U_\star\Delta^\top},
    \frac{1}{3}\lfro{\Delta\Delta^\top}}^2
  \ge \frac{2(\sqrt{2}-1)}{9r}\lfro{\Delta}^4.
\end{equation*}
The first part of the max is already shown
in Lemma~\ref{lemma:matrix-factorization-bound}. Now, if
$\lfro{U_\star\Delta^\top} \le \frac{1}{3}\lfro{\Delta\Delta^\top}$,
we have
\begin{equation*}
  \lfro{UU^\top - U_\star U_\star^\top} = \lfro{U_\star\Delta^\top
    + \Delta U_\star^\top + \Delta\Delta^\top} \ge
  \lfro{\Delta\Delta^\top} - 2\lfro{U_\star\Delta^\top} \ge
  \frac{1}{3}\lfro{\Delta\Delta^\top}.
\end{equation*}
Noting that $2(\sqrt{2}-1)<1$, we get the second part of the
max. The second inequality follows from the second max and that
\begin{equation*}
  \lfro{\Delta\Delta^\top}^2 = \sum_{i=1}^r \sigma_i(\Delta)^4 \ge
  \frac{1}{r}\left(\sum_{i=1}^r\sigma_i(\Delta)^2\right)^2 =
  \frac{1}{r}\lfro{\Delta}^4.
\end{equation*}

\subsection{Proof of Lemma~\ref{lemma:overspecified-growth}}
\label{appendix:proof-linear-overspecified}
The goal is to show that for $U\in\mc{M}_r$ near $U_\star$, the function
\begin{equation*}
  \vphi(U) \defeq \lfro{Q_2^\top U}^2 = \lnuc{Q_2^\top UU^\top Q_2} =
  \lnuc{Q_2^\top XQ_2}
\end{equation*}
grows quadratically in $\dist(U,U_\star)^2$. This is a quadratic
function on the feasible manifold
\begin{equation*}
  \mc{M}_r \defeq \set{U\in\R^{n\times r}:\mc{A}(UU^\top)=b}.
\end{equation*}
As $\vphi(U)$ depends on $U$ only through $UU^\top$, WLOG we can
assume that $U$ is aligned with $U_\star$, so that
$\dist(U,U_\star)=\lfro{U-U_\star}$. Let 
\begin{equation*}
  U_\star = Q_1V_\star, ~~~ \Delta \defeq U-U_\star = Q_1V_\Delta +
  Q_2W_\Delta
\end{equation*}
for some $V_\star,V_\Delta\in\R^{r_\star\times r}$ and
$W_\Delta\in\R^{(n-r_\star)\times r}$. As $U$ and $U_\star$ are
aligned, by properties of the Procrustes problem, we require
\begin{equation}
  \label{equation:delta-ustar-aligned}
  \Delta^\top U_\star = V_\Delta^\top Q_1^\top Q_1V_\star +
  W_\Delta^\top Q_2^\top Q_1V_\star = V_\Delta^\top V_\star \succeq
  0.
\end{equation}
We have
\begin{equation*}
  \vphi(U) = \vphi(U_\star+\Delta) =
  \lfro{Q_2^\top(Q_1V_\Delta+Q_2W_\Delta)}^2 = 
  \lfro{W_\Delta}^2
\end{equation*}
and
\begin{equation*}
  \lfro{\Delta}^2 = \lfro{Q_1V_\Delta + Q_2W_\Delta}^2 =
  \lfro{V_\Delta}^2 + \lfro{W_\Delta}^2.
\end{equation*}
Hence, to show quadratic growth, it suffices to show that
$\lfro{V_\Delta}^2$ is upper bounded by constant times
$\lfro{W_\Delta}^2$. To do this, we use the property of the tangent
space $\mc{T}_{U_\star}\mc{M}_r$. For $U$ near $U_\star$, the
difference matrix $\Delta=U-U_\star$ approaches
$\mc{T}_{U_\star}\mc{M}_r$. Indeed, we have the following result,
whose proof can be found in
Section~\ref{section:proof-quadratic-normal-component}.
\begin{lemma}
  \label{lemma:quadratic-normal-component}
  There exists a constant $C>0$ such that
  \begin{equation}
    \label{equation:normal-component-bound}
    \sum_{i=1}^k \<A_iU_\star, \Delta\>^2 \le
    C\lfro{\Delta}^4~~\textrm{for all}~U\in
    \mc{M}_r.
  \end{equation}
\end{lemma}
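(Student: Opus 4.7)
The plan is to exploit the feasibility constraint $\mc{A}(UU^\top) = b = \mc{A}(U_\star U_\star^\top)$ to directly relate $\<A_iU_\star,\Delta\>$ to a quadratic expression in $\Delta$, from which the fourth-order bound will follow by Cauchy--Schwarz. The key observation is that while the left-hand side of~\eqref{equation:normal-component-bound} looks like an inner product of $\Delta$ with a fixed direction (and so would naively scale like $\lfro{\Delta}^2$), the feasibility constraint forces this linear-looking quantity to secretly be a quadratic in $\Delta$.

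First I would write out the feasibility identity $\<A_i, UU^\top - U_\star U_\star^\top\> = 0$ and expand $UU^\top - U_\star U_\star^\top = U_\star \Delta^\top + \Delta U_\star^\top + \Delta\Delta^\top$. Using symmetry of $A_i$, both cross terms equal $\<A_iU_\star,\Delta\>$, so the identity collapses to
\begin{equation*}
  2\<A_iU_\star,\Delta\> + \<A_i, \Delta\Delta^\top\> = 0,
\end{equation*}
which gives the clean relation $\<A_iU_\star,\Delta\> = -\tfrac{1}{2}\<A_i,\Delta\Delta^\top\>$. This is the crucial step because it upgrades what appears to be a first-order quantity in $\Delta$ into a genuinely second-order one.

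Squaring, summing over $i$, and applying Cauchy--Schwarz gives
\begin{equation*}
  \sum_{i=1}^k \<A_iU_\star,\Delta\>^2 \le \frac{1}{4}\sum_{i=1}^k \lfro{A_i}^2 \cdot \lfro{\Delta\Delta^\top}^2.
\end{equation*}
To finish, I would use $\lfro{\Delta\Delta^\top}^2 = \sum_i \sigma_i(\Delta)^4 \le \big(\sum_i \sigma_i(\Delta)^2\big)^2 = \lfro{\Delta}^4$, which yields the claim with explicit constant $C = \tfrac{1}{4}\sum_{i=1}^k \lfro{A_i}^2$.

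I do not anticipate a real obstacle here; the proof is essentially a one-line cancellation once the feasibility identity is expanded. The only subtlety worth double-checking is that the lemma is stated uniformly on $\mc{M}_r$ rather than only locally near $U_\star$, but the argument above is global: it uses only the exact feasibility of $U$ and $U_\star$, not any smallness of $\Delta$. The constant $C$ depends only on the constraint matrices $A_i$ and is independent of $U$ and $U_\star$.
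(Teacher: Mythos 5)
Your proof is correct and follows essentially the same route as the paper: both exploit the feasibility identity $\<A_iU,U\> - \<A_iU_\star,U_\star\> = 2\<A_iU_\star,\Delta\> + \<A_i\Delta,\Delta\> = 0$ to turn the linear-looking term into a quadratic one, then square, sum, and apply Cauchy--Schwarz. The only cosmetic difference is the constant: the paper bounds $|\<A_i,\Delta\Delta^\top\>|$ via the operator norm ($C = \tfrac{1}{4}\sum_i \opnorm{A_i}^2$) while you use the Frobenius norm, which is equally valid.
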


Observe that
\begin{equation*}
  \<A_iU_\star, \Delta\> = \<A_iQ_1V_\star, Q_1V_\Delta+Q_2W_\Delta\>
  = \<Q_1^\top A_iQ_1, V_\Delta V_\star^\top\> + \<Q_2^\top A_iQ_1,
  W_\Delta V_\star^\top\>.
\end{equation*}
Rearranging, we get
\begin{equation*}
  \<Q_1^\top A_iQ_1, V_\Delta V_\star^\top\> = -\<Q_2^\top A_iQ_1,
  W_\Delta V_\star^\top\> + \<A_iU_\star, \Delta\>.
\end{equation*}
Squaring and summing over $i$ and using
Lemma~\ref{lemma:quadratic-normal-component}, we get
\begin{equation}
  \label{equation:dn-upper-bound}
  \begin{aligned}
    & \quad \sum_{i=1}^k \<Q_1^\top A_i Q_1, V_\Delta V_\star^\top\>^2
    \le 2\sum_{i=1}^k \<Q_2^\top A_iQ_1, W_\Delta V_\star^\top\>^2 +
    2\sum_{i=1}^k \<A_iU_\star, \Delta\>^2 \\
    & \le 2\sum_{i=1}^k \lfro{Q_2^\top A_iQ_1}^2 \cdot \lfro{W_\Delta
      V_\star^\top}^2 + 2C\lfro{\Delta}^4 \\
    & \le 2\left(\sum_{i=1}^k \lfro{Q_2^\top A_iQ_1}^2\right) \cdot
    \lambda_{\max}\left(V_\star V_\star^\top\right) \lfro{W_\Delta}^2
    + 2C(\lfro{V_\Delta}^2 + \lfro{W_\Delta}^2)^2.
  \end{aligned}
\end{equation}
On the other hand, by Assumption~\ref{assumption:dn}, the LHS can be
lower bounded as
\begin{equation}
  \label{equation:dn-lower-bound}
  \sum_{i=1}^k \<Q_1^\top A_i Q_1, V_\Delta V_\star^\top\>^2
  = \sum_{i=1}^k \<Q_1^\top A_iQ_1, \frac{V_\Delta V_\star^\top +
    V_\star V_\Delta^\top}{2}\>^2 \ge
  \sigmadn \lfro{\frac{V_\Delta V_\star^\top +
    V_\star V_\Delta^\top}{2}}^2 \ge \frac{\sigmadn}{2}\lfro{V_\Delta
  V_\star^\top}^2,
\end{equation}
where we symmetrized the matrix $V_\Delta V_\star^\top$ and used the
fact $\lfro{(A+A^\top)/2}^2=(\lfro{A}^2+\tr(A^2))/2\ge \lfro{A}^2/2$
for any square matrix $A$ to get the last inequality.

The quantity $\lfro{V_\Delta V_\star^\top}^2$ is invariant under any
right orthogonal rotation $V_\star\mapsto V_\star\Omega$,
$V_\Delta\mapsto V_\Delta\Omega$. Note that $r\ge r_\star$, we can
then find an appropriate rotation under which
$V_\star=[V_{\star,1}, \zeros_{r_\star\times(r-r_\star)}]$ where
$V_{\star,1}\in\R^{r_\star\times r_\star}$. Let
$V_\Delta=[V_{\Delta,1}, V_{\Delta,2}]$ under
the same rotation. The requirement $V_\Delta^\top V_\star\succeq 0$
then reads
\begin{equation*}
  V_\Delta^\top V_\star = \begin{bmatrix}
    V_{\Delta,1}^\top V_{\star,1} & 0 \\
    V_{\Delta,2}^\top V_{\star,1} & 0
  \end{bmatrix} \succeq 0.
\end{equation*}
In particular, $V_{\Delta,2}^\top V_{\star,1} = 0$, and as $V_{\star,1}$ is
invertible, we get $V_{\Delta,2}=0$ and $V_\Delta=[V_{\Delta,1},
0]$. Thus $V_\Delta V_\star^\top=V_{\Delta,1}V_{\star,1}^\top$, and
\begin{align*}
  & \quad \lfro{V_\Delta V_\star^\top}^2 =
    \lfro{V_{\Delta,1}V_{\star,1}^\top}^2 =
    \<V_{\Delta,1}^\top V_{\Delta,1}, V_{\star,1}^\top V_{\star,1}\>
  \\
  & \ge \lambda_{\min}\left(V_{\star,1}^\top V_{\star, 1}\right) \cdot 
    \tr\left(V_{\Delta,1}^\top V_{\Delta,1}\right) =
    \lambda_{r_\star}\left(V_\star V_\star^\top\right) \cdot
    \lfro{V_{\Delta,1}}^2 =
    \lambda_{r_\star}\left(V_\star V_\star^\top\right) \cdot
    \lfro{V_\Delta}^2.
\end{align*}
Substituting into~\eqref{equation:dn-lower-bound} and combining with
the upper bound~\eqref{equation:dn-upper-bound}, we get
\begin{equation*}
  \frac{\sigmadn}{2} \lambda_{r_\star}\left(V_\star
    V_\star^\top\right) \cdot 
  \lfro{V_\Delta}^2 \le 2\left(\sum_{i=1}^k \lfro{Q_2^\top
      A_iQ_1}^2\right) \cdot 
  \lambda_{\max}\left(V_\star V_\star^\top\right) \cdot
  \lfro{W_\Delta}^2 + 2C(\lfro{V_\Delta}^2+\lfro{W_\Delta}^2)^2.
\end{equation*}
Noting that $X_\star=U_\star U_\star^\top=Q_1V_\star
V_\star^\top Q_1^\top$, we get that
\begin{equation*}
  \lfro{V_\Delta}^2 \le \kappa\lfro{W_\Delta}^2 + C'(\lfro{V_\Delta}^2
  + \lfro{W_\Delta}^2)^2,~~~
  \kappa \defeq \frac{4\lambda_1(X_\star)}{\lambda_{r_\star}(X_\star)}
  \cdot \frac{\sum_{i=1}^k \lfro{Q_2^\top A_iQ_1}^2}{\sigmadn}.
\end{equation*}
When $U\in\ball(U_\star,\eps)$, we have
$\lfro{\Delta}^2=\lfro{V_\Delta}^2+\lfro{W_\Delta}^2\le \eps^2$, which
gives 
\begin{equation*}
  \lfro{V_\Delta}^2 \le \kappa\lfro{W_\Delta}^2 +
  C'\eps^2(\lfro{V_\Delta}^2+\lfro{W_\Delta}^2)~~~{\rm
    or}~~~\lfro{V_\Delta}^2 \le
  \frac{\kappa+C'\eps^2}{1-C'\eps^2}\lfro{W_\Delta}^2.
\end{equation*}
Thus for sufficiently small $\eps$, we have $\lfro{V_\Delta}^2\le
2\kappa\lfro{W_\Delta}^2$. 
This will imply the desired quadratic growth:
\begin{equation*}
  \vphi(U) = \vphi(U_\star+\Delta) = \lfro{W_\Delta}^2 \ge
  \frac{1}{2\kappa+1} (\lfro{V_\Delta}^2+\lfro{W_\Delta}^2) =
  \frac{1}{2\kappa+1}\lfro{\Delta}^2.
\end{equation*}

\begin{remark}
  The constant $\kappa$ above is the product of two condition numbers:
  the condition number of the solution $X_\star$, and the ``condition
  number'' of the constraint $\mc{A}$, measured in a way depending on
  $X_\star$ (through $Q_1,Q_2$).
\end{remark}

\begin{remark}
  A quadratic function on a manifold near a strict minimum does not
  necessarily have quadratic growth. For example, on the curve
  $y=x^2$, the quadratic function $f(x,y)=y^2$ has a strict minimum
  $(0,0)$, but there does not exist $c>0$ such that
  \begin{equation*}
    y^2 \ge c(x^2+y^2) = c(y+y^2)
  \end{equation*}
  near the origin. This is why simply showing $\lfro{Q_2^\top U}^2$
  non-vanishing is not enough. In contrast, in Euclidean spaces, a
  quadratic function with a strict minimizer always have quadratic
  growth, by a scaling argument.
\end{remark}






\subsubsection{Proof of Lemma~\ref{lemma:quadratic-normal-component}}
\label{section:proof-quadratic-normal-component}
Such a result holds locally (around $U_\star$) for any smooth manifold
$\mc{M}$; here we present a proof that harnesses the special structure
of $\mc{M}_r$, which allows the result to hold globally.

As $U,U_\star\in\mc{M}_r$, we have
$\mc{A}(UU^\top)=\mc{A}(U_\star U_\star^\top)=b$, so
\begin{equation*}
  0 = \<A_iU, U\> - \<A_iU_\star, U_\star\> = 2\<A_iU_\star, \Delta\>
  + \<A_i\Delta, \Delta\>.
\end{equation*}
Squaring and summing over $i$, we get
\begin{equation*}
  \sum_{i=1}^k \<A_iU_\star, \Delta\>^2 =
  \frac{1}{4}\sum_{i=1}^k\<A_i\Delta, \Delta\>^2 \le
  \frac{1}{4}\sum_{i=1}^k \opnorm{A_i}^2\lfro{\Delta}^4.
\end{equation*}
So it suffices to let $C=\sum_{i=1}^k\opnorm{A_i}^2/4$. 

\subsection{Proof of Theorem~\ref{theorem:sdp-linear-convergence}}
\label{appendix:proof-corollary-rate}
We need to first check that the proximity property holds, i.e. for
sufficiently small $\eps>0$, there exists some $d(\eps)>0$ such that
initializing in the $d(\eps)$-neighborhood of $S$, the iterates never
leave the $\eps$-neighborhood.

Recall that the exact penalty objective
$h(X)=f(X)+\lambda\ltwo{\mc{A}(X)-b}$ is convex in $X$.
Fixing any $\eps>0$, by Lemma~\ref{lemma:convex-growth}, there exists
$\delta(\eps)>0$ such that for any $X\succeq 0$,
$h(X)-h_\star\le\delta(\eps)$ implies $\lfro{X-X_\star}\le\eps$. Now,
suppose we initialize at $R^0$ such that
$\vphi(R^0)-\vphi_\star\le\delta(\eps)$, then by the descent property,
we have $\vphi(R^k)-\vphi_\star\le\delta(\eps)$ for all $k$, or
letting $X^k=R^kR^{k\top}$,
\begin{equation*}
  h(X^k) - h_\star = \vphi(R^k) - \vphi_\star \le \delta(\eps),
\end{equation*}
which implies that $\lfro{X^k - X_\star}\le\eps$. Further, in either
the exactly-specified or over-specified case, by
Lemma~\ref{lemma:overspecified-growth}, we have $\dist(R^k,S) \le
O(\sqrt{\eps})$. Thus, the iterates stay in a $O(\sqrt{\eps})$
neighborhood as long as $\vphi(R^0) - \vphi_\star\le\delta(\eps)$,
which can be guaranteed if $\dist(R^0, S)\le \delta(\eps)/\lipphif$
by Lipschitzness. Hence, the proximity property is satisfied with
$d(\eps)=\delta(\eps)/\lipphif$.

Theorem~\ref{theorem:geometry-exact-specified}
or~\ref{theorem:linear-objective-over-specified} guarantees that
$\vphi$ satisfies the norm convexity property with $\ell=5$. One
could easily check that for our $\vphi(\cdot)=h(c(\cdot))$,
$h(X)=f(X)+\lambda\ltwo{\mc{A}(X)-b}$ is locally Lipschitz with
constant $L=L_f+\lambda\opnorm{\mc{A}}$, and $c(R)=RR^\top$ has
$\beta=2$ Lipschitz gradients. Plugging in these bounds into
Theorem~\ref{theorem:linear-convergence}, we get linear convergence
with the desired rate.

We now compute the $\Lambdaall$, the lowest allowed choice of
$\lambda$, in our factorized semidefinite problem. From the proof of
Theorem~\ref{theorem:quadratic-implies-regularity}, 
\begin{equation*}
  \Lambdaall = \max\{\Lambdaqg, \Lambdaproj\},
\end{equation*}
where $\Lambdaqg=2\lipphif/\sigmacq$. At $R_\star$, by the first-order
optimality condition, we have
\begin{equation*}
  \grad\vphi_f(R_\star) + M_{R_\star} y_\star = 0,
\end{equation*}
 so $\grad\vphi_f(R_\star) \le
 \sigma_{\max}(M_{R_\star})\ltwo{y_\star}$ and locally $\lipphif\le 2
 \sigma_{\max}(M_{R_\star})\ltwo{y_\star}$. For the other part, note
 that all $R_\star\in S$ have equivalent behaviors, so
\begin{equation*}
  \Lambdaproj = 2\sup_{R_\star\in S}\ltwo{z(R_\star)} =
  2\ltwo{z(R_\star)} = 2\ltwo{y_\star}.
\end{equation*}
Putting together, it suffices to take
\begin{equation*}
  \Lambdaall =
  \max\set{\frac{4\sigma_{\max}(M_{R_\star})\ltwo{y_\star}}{\sigmacq},
    2\ltwo{y_\star}} =
  \frac{4\sigma_{\max}(M_{R_\star})\ltwo{y_\star}}{\sigmacq}.
\end{equation*}
With $\lambda=\Lambdaall$, the rate is
\begin{equation*}    
  q = 1 - \frac{1}{9+40\ell+100\ell^2L\beta/\alpha_\vphi} = 1 -
  \frac{1}{209 + 5000(\lipf +
    4\opnorm{\mc{A}}\ltwo{y_\star}\sigmamax/\sigmacq)/\alpha_\vphi}.
\end{equation*}
This is the desired result. \qed

{\bf Remark} We might carefully compare the rate we get with what we
would expect in classical non-linear programming or Riemannian
optimization. To achieve $\eps$-accuracy, locally we need
$O(M\log\frac{1}{\eps})$ iterations, where $M$ is the rate constant
\begin{equation*}
  M = O\left( \frac{L_f + \opnorm{\mc{A}}
      \ltwo{y_\star}\sigmamax/\sigmacq}{\alpha_\vphi} \right).
\end{equation*}
Most quantities have fairly standard interpretations:
\begin{enumerate}[(1)]
\item $\sigmamax/\sigmacq$: the condition number of the gradient
  $\grad(\mc{A}(RR^\top)-b)|_{R=R_\star}$. This
  term represents the difficulty to project onto the tangent space of
  the feasible set.
\item $\alpha_\vphi$: the quadratic growth constant of $\vphi_f$ on
  the feasible set, or the minimum eigenvalue of
  $\rhess\vphi_f(R_\star)$. This is the analogue of the strong
  convexity parameter in Euclidean optimization.
\item $\opnorm{\mc{A}}$: this is a scaling factor of the constraint
  function that can be compared to $f$.
\item $\ltwo{y_\star}$: norm of the optimal dual variable, importance
  of the constraint.
\end{enumerate}
Now, what does the term $L_f$ stand for? It is the Lipschitz constant
of the objective $f$, but turns out that it is also \emph{part of the
  Hessian of the factorized problem}. In fact, one can check that
\begin{equation*}
  \grad^2\vphi_f(R)[\Delta,\Delta] = \underbrace{
  \grad^2f(RR^\top)[R\Delta^\top+\Delta R^\top,R\Delta^\top+\Delta
  R^\top]}_{\rm I} + \underbrace{\<\grad f(RR^\top),
  \Delta\Delta^\top\>}_{\rm II}. 
\end{equation*}
The term $L_f$ bounds $\opnorm{\grad f(RR^\top)}$,  part II of the
Hessian. As the algorithm solves
problem~\eqref{problem:constrained-bm}, the complexity must also
depend on the maximum eigenvalue of part I -- this is taken care of in
the subproblems.

\section{Experiments}
\label{section:experiments}
We perform experiments that corroborate our theoretical
observations. We begin by describing our implementation of the
prox-linear algorithm and then show experimental results on a set of
MaxCut SDP problem and a set of quadratic semidefinite optimization
problem. 

\subsection{Implementation details}
The prox-linear algorithm minimizes $\vphi(R)$ by iteratively solving
the prox-linear sub-problems~\eqref{algorithm:prox-linear}, which
we recall minimizes (in $\Delta$)
\begin{align*}
  & f(c(R)+\grad c(R)^\top\Delta) + \lambda\ltwo{\mc{A}(c(R)+\grad
    c(R)^\top\Delta)-b} + \frac{1}{2\alpha}\lfro{\Delta}^2 \\
  = & f(RR^\top + R\Delta^\top + \Delta R^\top) +
      \lambda\ltwo{\mc{A}(RR^\top) + 2\mc{A}(R\Delta^\top)-b} +
      \frac{1}{2\alpha}\lfro{\Delta}^2. 
\end{align*}
These problems are $1/\alpha$-strongly convex but non-smooth due to
the norm term. Utilizing that $f$ and the norm takes in a linear
transform of $\Delta$, we describe how we use a variant of ADMM, the
Proximal Operator Graph Splitting (POGS) algorithm~\cite{ParikhBo14},
to solve these problems.

Introducing $z=2\mc{A}(R\Delta^\top)\in\R^m$, the problem is
equivalent to
\begin{align*}
  \minimize & ~~ f(RR^\top+R\Delta^\top+\Delta R^\top) +
              \lambda\ltwo{z+\mc{A}(RR^\top)-b} +
              \frac{1}{2\alpha}\lfro{\Delta}^2 \\
  \subjectto & ~~ z = 2\mc{A}(R\Delta^\top).
\end{align*}
Let $g(z)=\lambda\ltwo{z+\mc{A}(RR^\top)-b}$, and
$h(\Delta)=f(RR^\top+R\Delta^\top+\Delta
R^\top)+\frac{1}{2\alpha}\lfro{\Delta}^2$. Let
$\mc{C}=\set{(z,\Delta):z=2\mc{A}(R\Delta^\top)}$
be the constraint set. The POGS algorithm contains the following steps
in each iterate.
\begin{enumerate}[(1)]
\item Compute
  $(z^{k+1/2},\Delta^{k+1/2})=(\prox_{g/\rho}(z^k-\lambda_z^k),
  \prox_{h/\rho}(\Delta^k-\lambda_\Delta^k))$.
\item Compute
  $(z^{k+1},\Delta^{k+1})=\proj_{\mc{C}}(z^{k+1/2}+\lambda_z^k,
  \Delta^{k+1/2}+\lambda_\Delta^k)$.
\item Update $\lambda_*^{k+1}=\lambda_*^k+(*^{k+1/2}-*^{k+1})$ for
  $*\in\set{z,\Delta}$.
\end{enumerate}
In step (1), we need to evaluate the prox operator of $g$ and
$h$. As $g$ is a two-norm, the prox mapping has an
explicit solution. Evaluating the prox of $h$ is in general
non-trivial, but in case where $f$ is linear or quadratic, it is
straightforward to find the zero of the gradient, which by
convexity is the solution to the proximal problem. In step (2), we
need to compute the projection onto a linear subspace in the space of
$(z,\Delta)$. For small instances, we can directly form the matrix of
the linear mapping and compute the projection matrix. For large
instances, the inverse involved becomes too costly, but it is often
efficient to compute $z=2\mc{A}(R\Delta^\top)$ given $\Delta$, in
which case we can use conjugate gradient algorithms to compute the
projection iteratively. 

\subsection{Linear objective}
\label{section:experiments-linear}
In our first set of experiments, we solve a MaxCut SDP with $n=50$
and $\rank(X_\star)=r_\star=2$. Such a problem instance is obtained by
explicitly constructing a set of dual certificate conditions.

\paragraph{Methods}
We test our prox-liner algorithm
(Iteration~\eqref{algorithm:prox-linear}) on the exact penalty
formulation~\eqref{problem:penalized-bm} with factorization rank
$r\in\set{1,2,3}$. The sub-problems are solved by the POGS
algorithm. We set the penalty parameter $\lambda=100>2\ltwo{y_\star}$
sufficiently large. As suggested by our theory, we should choose the
stepsize $t=(L\beta)^{-1}\propto\lambda^{-1}$. We set $t=1/\lambda$,
which we observe often yields good convergence in practice. We
initialize randomly at points that are not necessarily close to $S$ --
this also helps us examine the global behavior of our algorithm.

To compare with Riemannian algorithms, we run another set with
feasible start, and compare our prox-linear with the projected
gradient descent algorithm, which in the MaxCut SDP iterates
\begin{equation*}
  R^{k+1} = {\rm row-normalize}\left((I_{n\times n} - 2tC)R^k\right).
\end{equation*}
This has similar behavior as the Riemannian gradient descent but is
slightly easier to implement. 
\begin{figure}[ht]
  \centering
  \includegraphics[width=\linewidth]{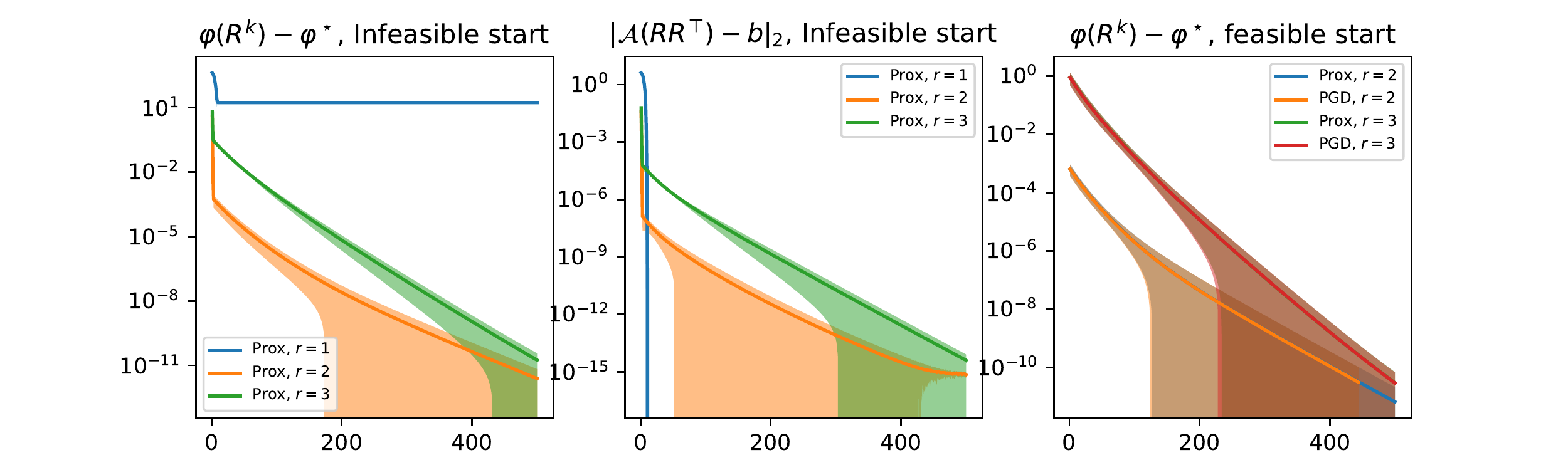}
  \caption{Results on the MaxCut SDP. Left and Middle: infeasible
    start. Right: feasible start. The rank of the true solution is
    2. The algorithm converges linearly to $X_\star$ when factorizing
    to rank $2,3$ and fails to find $X_\star$ when factorizing
    to rank $1$.} 
  \label{figure:maxcut}
\end{figure}

\paragraph{Result}
Figure~\ref{figure:maxcut} reports the composite objective value and
the infeasibility, plotting their mean and error bar of $\pm 2$ standard
deviations across $n_{\rm init}=10$ random initializations.
Observe that $r\in\set{2,3}$ yields linear convergence to the optimum
with $r=2$ (true rank) converging faster and $r=3$ (overspecified
rank) slightly slower. As the objective has linear growth, our theory
guarantees that the factorized versions have quadratic growth, and the
experiments are consistent with the theory. In contrast, when we
under-specifying the rank (setting $r=1$), the exact penalty
formulation will converge to a sub-optimal point with large objective
value. Also, the prox-linear algorithm converges very fast in the
beginning iterations, enforcing $R$ to quickly fall onto the
constraint set. This suggests that it also has nice global behavior.

In the feasible start experiment, we see that projected gradient
descent gives almost the same objective values as the exact penalty
approach, suggesting that the connections we make in
Theorem~\ref{theorem:quadratic-implies-regularity} might indeed
govern the behavior of these two algorithms. Formally making this
connection remains an interesting open question. 

\subsection{Quadratic objective}
In our second set of experiments, we solve the following random
quadratics problem.

\paragraph{Problem setting}
For $Y\in\symm^n$, consider the approximation problem
\begin{equation}
  \label{problem:random-quadratics}
  \begin{aligned}
    \minimize & ~~ \frac{1}{2}\lfro{X-Y}^2 \\
    \subjectto & ~~ \mc{A}(X) = b \\
    & ~~ X \succeq 0.
  \end{aligned}
\end{equation}
When does this problem have a low-rank solution? Intuitively, we
would ask the constraint $\mc{A}(X)=b$ to be restrictive -- without
the constraints, the solution will be the projection of $Y$ onto the
PSD cone, whose rank equals the number of positive eigenvalues of
$Y$.

We now spell out conditions under which a given
$X_\star=R_\star R_\star^\top$ is the unique solution and the
problem has quadratic growth. By the KKT conditions,
$X_\star\succeq 0$ is optimal iff $\mc{A}(X_\star)=b$ and there
exists dual pairs $(y,Z)$ such that
\begin{equation*}
  X_\star - Y + \mc{A}^*(y) = Z,
\end{equation*}
with $Z\succeq 0$ and $\<Z,X_\star\>=0$. Letting
$Q_1\in\R^{n\times r_\star}$ be a basis of ${\rm range}(R_\star)$
and $Q_2\in\R^{n\times(n-r_\star)}$ be its orthogonal complement,
then we have $Z=Q_2WQ_2^\top$ for some $W\succeq 0$. One can further
check that if $W\succeq\lambda I_{n\times r_\star}$ for some
$\lambda>0$, then the problem has quadratic growth.

We note that it is fairly straighforward to generate an instance of
problem~\eqref{problem:random-quadratics} with random
constraints that has low-rank solution and quadratic growth. The
following gives such a procedure.
\begin{enumerate}[(1)]
\item Generate $R_\star\in\R^{n\times r}$ and $X_\star=R_\star
  R_\star^\top$. Find $Q_1,Q_2$ and set
  $Z=\lambda Q_2Q_2^\top$ for some $\lambda>0$.
\item Generate $y\in\R^k$, generate $\mc{A}$ randomly,
  e.g. $A_i=a_ia_i^\top$ where $a_i$ are i.i.d. standard
  Gaussian. Compute $b=\mc{A}(X_\star)$.
\item Compute $Y=X_\star+\mc{A}^*(y)-Z$.
\end{enumerate}

\paragraph{Method}
We solve an instance of this random quadratics problem with $n=50$ and
$\rank(X_\star)=r_\star=3$. We specify
the rank $r\in\set{3,4}$ and run the prox-linear algorithm with the
general-case POGS to solve the sub-problems. We choose $\lambda=500$
sufficiently large and $t=1/\lambda$ accordingly such that prox-linear
converges linearly with $r=3$.

\paragraph{Result}
Figure~\ref{figure:random-quadratics} reports the compositie objective
values and infeasiblities, plotting their mean and error bar across
$n_{\rm init}=10$ random (infeasible) initializations.  The
prox-linear algorithm converges linearly to a very high accuracy
(about $10^{-12}$ feasibility). Observe that when we over-specify the
rank (setting $r=4$), the algorithm no longer converges linearly. Note
that the $r=4$ experiments are with the same $(\lambda,t)$ values as
the $r=3$ experiment, but we find that this phenomenon is very robust
to the penalty parameter and stepsize choice. This demonstrates our
theory about rank over-specification in
Section~\ref{section:over-specification}: over-specifying the rank for
general non-linear objectives will not carry quadratic growth to
$\vphi$ and thus prohibits linear convergence of the prox-linear
algorithm.

\begin{figure}[ht]
  \centering
  \includegraphics[width=0.7\linewidth]{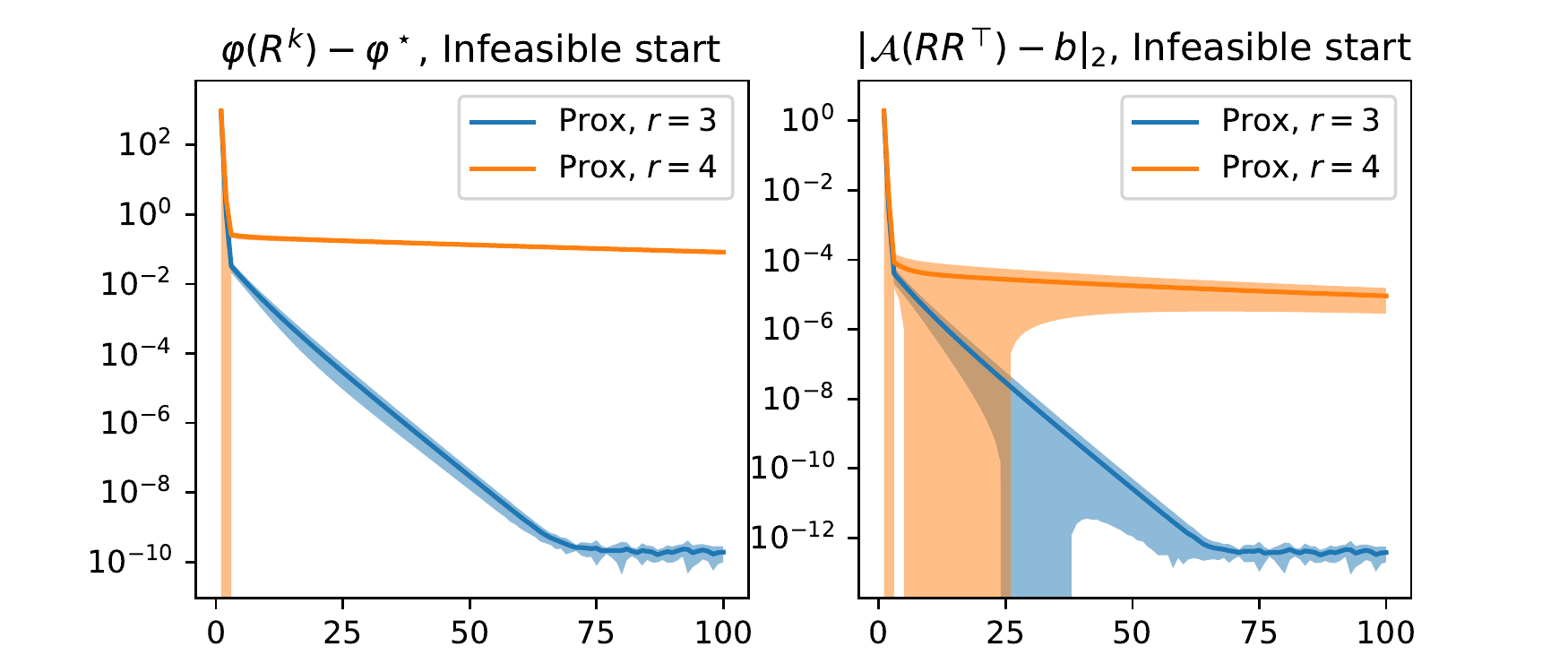}
  \caption{Results on the random quadratics problem. Left: composite
    objective. Right: feasibility gap. The solution has rank 3. The
    prox-linear algorithm converges linearly to $X_\star$ when
    factorizing to the right rank, and much slower when over-specifing
    the rank by one. 
  }
  \label{figure:random-quadratics}
\end{figure}

\end{document}